\newcommand{\bol}[1]{\mbox{\boldmath$#1$}}
\newcommand{\bmu}{\bol{\mu}}
\newcommand{\btheta}{\bol{\theta}}
\newcommand{\bnu}{\bol{\nu}}
\newcommand{\bpsi}{\bol{\psi}}
\newcommand{\bl}{\mathbf{l}}
\newcommand{\bI}{\mathbf{I}}
\newcommand{\bx}{\mathbf{x}}
\newcommand{\bu}{\mathbf{u}}
\newcommand{\by}{\mathbf{y}}
\newcommand{\bS}{\mathbf{S}}
\newcommand{\bB}{\mathbf{B}}
\newcommand{\bSigma}{\bol{\Sigma}}
\newcommand{\bOmega}{\bol{\Omega}}
\newcommand{\bomega}{\bol{\omega}}
\newcommand{\bX}{\mathbf{X}}
\newcommand{\bY}{\mathbf{Y}}
\newcommand{\bV}{\mathbf{V}}
\newcommand{\bzero}{\mathbf{0}}
\newtheorem{theorem}{Theorem}
\newtheorem{proposition}{Proposition}
\newtheorem{corollary}{Corollary}
\begin{document}
\begin{center}
\vspace*{2cm} \noindent
{\bf \large
  Central limit theorems for functionals of large sample covariance matrix and mean vector in matrix-variate location mixture of normal distributions
}\\
\vspace{1cm} \noindent {\sc Taras Bodnar$^{a,}$\footnote{Corresponding author. E-mail address: taras.bodnar@math.su.se.
The second author appreciates the financial support of the Swedish Research Council Grant Dnr: 2013-5180 and Riksbankens Jubileumsfond
Grant Dnr: P13-1024:1}, Stepan Mazur$^{b}$, Nestor Parolya$^{c}$
}\\
\vspace{1cm}
{\it \footnotesize  $^a$
 Department of Mathematics, Stockholm University, SE-10691 Stockholm, Sweden}\\
{\it \footnotesize  $^b$Department of Statistics, School of Business, \"Orebro University, SE-70182 \"Orebro, Sweden
} \\
{\it \footnotesize  $^c$
Institute of Statistics, Leibniz University of Hannover, D-30167 Hannover, Germany}\\
\end{center}

%%% ----------------------------------------------------------------------

\begin{abstract}
In this paper we consider the asymptotic distributions of functionals of the sample covariance matrix and the sample mean vector obtained under the assumption that the matrix of observations has a matrix-variate location mixture of normal distributions. The central limit theorem is derived for the product of the sample covariance matrix and the sample mean vector. Moreover, we consider the product of the inverse sample covariance matrix and the mean vector for which the central limit theorem is established as well. All results are obtained under the large-dimensional asymptotic regime where the dimension $p$ and the sample size $n$ approach to infinity such that $p/n\to c\in[0 , +\infty)$ when the sample covariance matrix does not need to be invertible and $p/n\to c\in [0, 1)$ otherwise.
\end{abstract}

\noindent ASM Classification: 62H10, 62E15, 62E20, 60F05, 60B20   \\
\noindent {\it Keywords}:  Normal mixtures, skew normal distribution,  large dimensional asymptotics, stochastic representation, random matrix theory. \\

\newpage
\section{Introduction}
The functions of the sample covariance matrix and the sample mean vector appear in various statistical applications.
The classical improvement techniques for the mean estimation have already been discussed by \citet{Stein1956} and \citet{Jorion1986}.
In particular, \citet{Efron2006} constructed confidence regions of smaller volume than the standard spheres for the mean vector of a multivariate normal distribution.
\citet{fan2008}, \citet{baishi2011}, \citet{BodnarGupta2011}, \citet{cai20121}, \citet{cai20122}, \citet{fan2013}, \citet{bodguppar2014}, \citet{wang2015}, \citet{bodguppar2015} among others suggested improved techniques for the estimation of covariance matrix and precision matrix (the inverse of covariance matrix).

In our work we introduce the family of matrix-variate location mixture of normal distributions (MVLMN) which is a generalization of the models considered by \citet{AzzaliniDallaValle1996}, \citet{AzzaliniCapitanio1999}, \citet{Azzalini2005}, \citet{LiseoLoperfido2003,LiseoLoperfido2006}, \citet{BartolettiLoperfido2010}, \citet{Loperfido2010}, \citet{ChristiansenLoperfido2014}, \citet{Adcock2015}, \citet{DeLucaLoperfido2015} among others.
Under the assumption of MVLMN we consider the expressions for the sample mean vector $\overline{\mathbf{x}}$ and the sample covariance matrix $\mathbf{S}$.
In particulary, we deal with two products $\bl^\top\mathbf{S}\overline{\mathbf{x}}$ and $\bl^\top\mathbf{S}^{-1}\overline{\mathbf{x}}$ where $\mathbf{l}$ is a non-zero vector of constants. It is noted that this kind of expressions has not been intensively considered in the literature, although they are present in numerous important applications.
The first application of the products arises in the portfolio theory, where the vector of optimal portfolio weights is proportional to $\mathbf{S}^{-1}\overline{\mathbf{x}}$. The second application is in the discriminant analysis where the coefficients of the discriminant function are expressed as a product of the inverse sample covariance matrix and the difference of the  sample mean vectors.

\citet{BodnarOkhrin2011} derived the exact distribution of the product of the inverse sample covariance matrix and the sample mean vector under the assumption of normality, while \citet{KotsiubaMazur2015} obtained its asymptotic distribution as well as its approximate density based on the Gaussian integral and the third order Taylor series expansion. Moreover, \citet{BodnarMazurOkhrin2013,BodnarMazurOkhrin2014} analyzed the product of the sample (singular) covariance matrix and the sample mean vector. In the present paper, we contribute to the existing literature by deriving the central limit theorems (CLTs) under the introduced class of matrix-variate distributions in the case of the high-dimensional observation matrix. Under the considered family of distributions, the columns of the observation matrix are not independent anymore and, thus, the CLTs cover a more general class of random matrices.

Nowadays, modern scientific data include large number of sample points which is often comparable to the number of features (dimension) and so the sample covariance matrix and the sample mean vector are not the efficient estimators anymore. For example, stock markets include a large number of companies which is often close to the  number of available time points. In order to {understand better} the statistical properties of the traditional estimators and tests based on high-dimensional settings, it is of interest to study the asymptotic distribution of the above mentioned bilinear forms involving the sample covariance matrix and the sample mean vector.

The appropriate central limit theorems, which do not suffer from the ``curse of dimensionality'' and do not reduce the number of dimensions, are of  great interest for high-dimensional statistics because more efficient estimators and tests may be constructed and applied in practice. The classical multivariate procedures are based on the central limit theorems assuming that the dimension $p$ is fixed and the sample size $n$ increases. However, numerous authors provide quite reasonable proofs that this assumption does not lead to precise  distributional approximations for commonly used statistics, and that under increasing dimension asymptotics the better approximations can be obtained [see, e.g., \cite{baisil2004} and references therein]. Technically speaking, under the high-dimensional asymptotics we understand the case when the sample size $n$ and the dimension $p$ tend to infinity, such that their ratio $p/n$ converges to some positive constant $c$. Under this condition the well-known Mar\v{c}enko-Pastur and Silverstein equations were derived [see, \cite{marpas1967}, \cite{silverstein1995}].

The rest of the paper is structured as follows. In Section 2 we introduce a semi-parametric matrix-variate location mixture of normal distributions. Main results are given in Section 3, where we derive the central limit theorems under high-dimensional asymptotic regime of the (inverse) sample covariance matrix and the sample mean vector under the MVLMN. Section 4 presents a short numerical study in order to verify the obtained analytic results.

%--------------------------------------------------------------------------------------
% SECTION 2
%--------------------------------------------------------------------------------------

\section{Semi-parametric family of matrix-variate location mixture of normal distributions}

In this section we introduce the family of MVLMN which generalizes the existent families of skew normal distributions.

Let
\begin{eqnarray*}
\mathbf{X}
=
\left(
\begin{array}{ccc}
x_{11}&  \ldots   &  x_{1n}
\\
\vdots  &  \ddots  &  \vdots
\\
x_{p1}   &   \ldots   &x_{pn}
\end{array}
\right)
=
\left(\mathbf{x}_{1},...,\mathbf{x}_{n}\right),
\end{eqnarray*}
be the $p\times n$ observation matrix where $\mathbf{x}_{j}$ is the $j^{th}$ observation vector.
In the following, we assume that the random matrix $\mathbf{X}$ possesses a stochastic representation given by
\begin{eqnarray}
\label{model}
\mathbf{X}
\stackrel{d}{=}
\mathbf{Y}+\mathbf{B}\bnu\mathbf{1}_{n}^\top,
\end{eqnarray}
where $\mathbf{Y}\sim \mathcal{N}_{p,n}(\bmu\mathbf{1}_{n}^\top,\mathbf{\Sigma}\otimes\mathbf{I}_{n})$ ($p\times n$-dimensional matrix-variate normal distribution with mean matrix $\bmu\mathbf{1}_{n}^\top$ and covariance matrix $\mathbf{\Sigma}\otimes\mathbf{I}_{n}$), $\bnu$ is a $q$-dimensional random vector with continuous density function $f_{\bnu}(\cdot)$, $\mathbf{B}$ is a $p\times q$ matrix of constants.
Further, it is assumed that $\mathbf{Y}$ and $\bnu$ are independently distributed.
If random matrix $\mathbf{X}$ follows model (\ref{model}), then we say that $\mathbf{X}$ is MVLMN distributed with parameters $\bmu$, $\mathbf{\Sigma}$, $\mathbf B$, and $f_{\bnu}(\cdot)$.
The first three parameters are finite dimensional, while the fourth parameter is infinite dimensional. This makes model (\ref{model}) to be of a semi-parametric type.
The assertion we denote by $\mathbf{X}  \sim \mathcal{LMN}_{p,n;q} ( \bmu ,   \mathbf{\Sigma},  \mathbf B ;  f_{\bnu})$.
If $f_{\bnu}$ can be parametrized by finite dimensional parameter $\btheta$, then (\ref{model}) reduces to a parametrical model which is denoted by $\mathbf{X} \sim \mathcal{LMN}_{p,n;q}  ( \bmu,  \mathbf{\Sigma}, \mathbf B; \btheta)$.
If $n=1$, then we use the notation $\mathcal{LMN}_{p;q}(\cdot,\cdot,\cdot;\cdot)$ instead of $\mathcal{LMN}_{p,1;q}(\cdot,\cdot,\cdot;\cdot)$.

From (\ref{model}) the density function of $\mathbf{X}$ is expressed as
\begin{eqnarray}
\label{density of X}
f_{\mathbf{X}}(\mathbf{Z})
=
\int_{\mathbb{R}^{q}}
 f_{N_{p,n}(\bmu \mathbf 1_n^\top,\mathbf{\Sigma}\otimes\mathbf{I}_{n})}(\mathbf{Z}-\mathbf{B}\bnu^*\mathbf{1}_{n}^\top)
 f_{\bnu}(\bnu^*)
\mathbf{d\bnu}^*.
\end{eqnarray}
In a special case when $\bnu=|\bpsi|$ is the vector formed by the absolute values of every element in $\bpsi$ where $\bpsi\sim \mathcal{N}_q(\mathbf{0},\mathbf{\Omega})$, i.e. $\bnu$ has a $q$-variate truncated normal distribution, we get

\begin{proposition}\label{kapec}
Assume model (\ref{model}). Let $\bnu=|\bpsi|$ with $\bpsi\sim \mathcal{N}_q(\mathbf{0},\mathbf{\Omega})$. Then the density function of $\mathbf{X}$ is given by
\begin{eqnarray}
  f_{\mathbf{X}}(\mathbf{Z}) = \widetilde{C}^{-1}\Phi_{q}\left(\mathbf{0};-\mathbf{D}\mathbf{E}\mathbf{vec}(\mathbf{Z}-\bmu\mathbf{1}_n^\top),\mathbf{D}\right)
  \phi_{pn}\left(\mathbf{vec}(\mathbf{Z}-\bmu\mathbf{1}_n^\top);\mathbf{0},\mathbf{F}\right)
\end{eqnarray}
where
$\mathbf{D} = (n\mathbf{B}^\top\mathbf{\Sigma}^{-1}\mathbf{B}+\mathbf{\Omega}^{-1})^{-1}$,
$\mathbf{E}=\mathbf{1}_n^\top\otimes\mathbf{B}^\top\mathbf{\Sigma}^{-1}$, $\mathbf{F}=(\mathbf{I}_n\otimes\mathbf{\Sigma}^{-1}-\mathbf{E}^\top\mathbf{D}\mathbf{E})^{-1}$, and
$$\widetilde{C}^{-1}=C^{-1}\frac{|\mathbf{F}|^{1/2}|\mathbf{D}|^{1/2}}{|\mathbf{\Omega}|^{1/2}|\mathbf{\Sigma}|^{n/2}}
~~\text{with} ~~
C=\Phi_{q}(\mathbf{0};\mathbf{0},\mathbf{\Omega}).$$
\end{proposition}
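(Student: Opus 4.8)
The plan is to compute the integral in \eqref{density of X} explicitly by exploiting the Gaussian structure of both the conditional density of $\mathbf{X}$ given $\bnu$ and the density of $\bpsi$, and then recognizing that integrating a Gaussian kernel against the positive orthant produces a multivariate normal c.d.f. First I would pass to vectorized form: writing $\bz = \mathbf{vec}(\mathbf{Z}-\bmu\mathbf{1}_n^\top)$ and noting $\mathbf{vec}(\mathbf{B}\bnu^*\mathbf{1}_n^\top) = (\mathbf{1}_n\otimes\mathbf{B})\bnu^* = \mathbf{E}^\top\bSigma\,(\text{something})$ — more precisely $(\mathbf{1}_n^\top\otimes\mathbf{B}^\top\bSigma^{-1})^\top = \mathbf{1}_n\otimes(\bSigma^{-1}\mathbf{B})$, so I will keep the matrix $\mathbf{E}=\mathbf{1}_n^\top\otimes\mathbf{B}^\top\bSigma^{-1}$ as the natural object. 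The conditional density $f_{N_{p,n}(\bmu\mathbf{1}_n^\top,\bSigma\otimes\mathbf{I}_n)}(\mathbf{Z}-\mathbf{B}\bnu^*\mathbf{1}_n^\top)$ becomes $\phi_{pn}(\bz - (\mathbf{1}_n\otimes\mathbf{B})\bnu^*;\mathbf{0},\bSigma\otimes\mathbf{I}_n)$, and by symmetry of the truncated normal, $f_{\bnu}(\bnu^*) = 2^q\,\phi_q(\bnu^*;\mathbf{0},\bOmega)\mathbbm{1}_{\{\bnu^*\ge\mathbf{0}\}}$, with $2^q = C^{-1}$ since $C=\Phi_q(\mathbf{0};\mathbf{0},\bOmega)=2^{-q}$.

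Next I would combine the two exponents into a single quadratic form in $\bnu^*$. Expanding
$$
(\bz - (\mathbf{1}_n\otimes\mathbf{B})\bnu^*)^\top(\bSigma^{-1}\otimes\mathbf{I}_n)(\bz - (\mathbf{1}_n\otimes\mathbf{B})\bnu^*) + \bnu^{*\top}\bOmega^{-1}\bnu^*,
$$
and using $(\mathbf{1}_n\otimes\mathbf{B})^\top(\bSigma^{-1}\otimes\mathbf{I}_n)(\mathbf{1}_n\otimes\mathbf{B}) = n\,\mathbf{B}^\top\bSigma^{-1}\mathbf{B}$ together with $(\mathbf{1}_n\otimes\mathbf{B})^\top(\bSigma^{-1}\otimes\mathbf{I}_n) = \mathbf{1}_n^\top\otimes\mathbf{B}^\top\bSigma^{-1} = \mathbf{E}$, the cross term is $-2\bnu^{*\top}\mathbf{E}\bz$ and the quadratic coefficient of $\bnu^*$ is exactly $\mathbf{D}^{-1}=n\mathbf{B}^\top\bSigma^{-1}\mathbf{B}+\bOmega^{-1}$. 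Completing the square in $\bnu^*$ gives a Gaussian in $\bnu^*$ centered at $\mathbf{D}\mathbf{E}\bz$ with covariance $\mathbf{D}$, times a factor depending only on $\bz$ whose exponent is $-\tfrac12\bz^\top(\bSigma^{-1}\otimes\mathbf{I}_n)\bz + \tfrac12(\mathbf{E}\bz)^\top\mathbf{D}(\mathbf{E}\bz) = -\tfrac12\bz^\top(\bSigma^{-1}\otimes\mathbf{I}_n - \mathbf{E}^\top\mathbf{D}\mathbf{E})\bz = -\tfrac12\bz^\top\mathbf{F}^{-1}\bz$. Integrating the $\bnu^*$-Gaussian over $\{\bnu^*\ge\mathbf{0}\}$ yields, up to the normalizing constant $(2\pi)^{q/2}|\mathbf{D}|^{1/2}$, the value $\Phi_q(\mathbf{0};-\mathbf{D}\mathbf{E}\bz,\mathbf{D})$ (probability that a $\mathcal{N}(\mathbf{D}\mathbf{E}\bz,\mathbf{D})$ vector is nonnegative equals the c.d.f. at $\mathbf{0}$ of its negative). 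Collecting the $\bz$-free constants — the $(2\pi)$ powers, $|\bSigma|^{-n/2}$, $|\bOmega|^{-1/2}$, $|\mathbf{D}|^{1/2}$, and the $|\mathbf{F}|^{1/2}$ needed to turn $\exp(-\tfrac12\bz^\top\mathbf{F}^{-1}\bz)$ into the normalized $\phi_{pn}(\bz;\mathbf{0},\mathbf{F})$ — gives precisely $\widetilde{C}^{-1}$, which finishes the proof.

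The main obstacle is purely bookkeeping: correctly handling the Kronecker-product identities (especially $(\mathbf{1}_n\otimes\mathbf{B})^\top(\bSigma^{-1}\otimes\mathbf{I}_n)(\mathbf{1}_n\otimes\mathbf{B}) = n\mathbf{B}^\top\bSigma^{-1}\mathbf{B}$ and the determinant relation $|\bSigma\otimes\mathbf{I}_n| = |\bSigma|^n$) and tracking every $(2\pi)$ and determinant factor so that the leftover constant assembles exactly into $\widetilde{C}^{-1} = C^{-1}|\mathbf{F}|^{1/2}|\mathbf{D}|^{1/2}|\bOmega|^{-1/2}|\bSigma|^{-n/2}$. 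One should also verify that $\mathbf{F}^{-1}=\mathbf{I}_n\otimes\bSigma^{-1}-\mathbf{E}^\top\mathbf{D}\mathbf{E}$ is positive definite so that $\phi_{pn}(\cdot;\mathbf{0},\mathbf{F})$ is a genuine density; this follows because it is the Schur complement arising from the joint (nonsingular) Gaussian covariance of $(\mathbf{vec}(\mathbf{Y}),\bpsi)$ after the linear change of variables, equivalently from the fact that the marginal of $\mathbf{vec}(\mathbf{X})$ before truncation is nondegenerate.
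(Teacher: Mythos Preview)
Your approach is exactly the paper's: vectorize, complete the square in $\bnu^*$ to isolate a Gaussian with mean $\mathbf{D}\mathbf{E}\bz$ and covariance $\mathbf{D}$, integrate over the positive orthant to produce $\Phi_q(\mathbf{0};-\mathbf{D}\mathbf{E}\bz,\mathbf{D})$, and collect the remaining constants into $\widetilde C^{-1}$. Two small bookkeeping slips to correct: (i) under the column-stacking $\mathbf{vec}$ convention the covariance of $\mathbf{vec}(\mathbf{Y})$ is $\mathbf{I}_n\otimes\bSigma$, so the quadratic form carries $\mathbf{I}_n\otimes\bSigma^{-1}$ rather than $\bSigma^{-1}\otimes\mathbf{I}_n$ (your subsequent identities $n\mathbf{B}^\top\bSigma^{-1}\mathbf{B}$ and $\mathbf{E}$ are nonetheless the right ones); (ii) the assertion $C=\Phi_q(\mathbf{0};\mathbf{0},\bOmega)=2^{-q}$ is valid only when $\bOmega$ is diagonal---for general $\bOmega$ the orthant probability depends on the off-diagonal entries, so simply carry $C^{-1}$ through symbolically as the paper does.
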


The proof of Proposition \ref{kapec} is given in the Appendix.

It is remarkable that model (\ref{model}) includes several skew-normal distributions considered by \citet{AzzaliniDallaValle1996}, \citet{AzzaliniCapitanio1999}, \citet{Azzalini2005}.
For example, in case of $n=1$, $q=1$, $\bmu=\mathbf{0}$, $\mathbf{B}=\mathbf{\Delta} \mathbf{1}_p$, and $\mathbf{\Sigma}=( \mathbf{I}_p - \mathbf{\Delta} ^2 ) ^{1/2}\mathbf{\Psi}( \mathbf{I}_p - \mathbf{\Delta} ^2 ) ^{1/2}$ we get
\begin{eqnarray}
\label{spcase}
\mathbf{X}
\stackrel{d}{=}
( \mathbf{I}_p - \mathbf{\Delta} ^2 ) ^{1/2}   \mathbf{v}_0  +  \mathbf{\Delta} \mathbf{1}_p  |v_1|,
\end{eqnarray}
where $\mathbf{v}_0\sim \mathcal{N}_p(\mathbf{0},\mathbf{\Psi})$ and $v_1\sim \mathcal{N}(0,1)$ are independently distributed; $\mathbf{\Psi}$ is a correlation matrix and $\mathbf{\Delta}=diag(\delta_1,...,\delta_p)$ with $\delta_j\in (-1,1)$.
Model (\ref{spcase}) was previously introduced by \citet{Azzalini2005}.

Moreover, model (\ref{model}) also extends the classical random coefficient growth-curve model (see, \cite{potthoff1964}, \cite{amemiya1994} among others), i.e., the columns of $\bX$ can be rewritten in the following way
  \begin{eqnarray}
    \bx_i\overset{d}{=}\bmu+\bB\bnu+\boldsymbol{\varepsilon}_i,~~i=1,\ldots,n\,,
  \end{eqnarray}
where $\boldsymbol{\varepsilon}_1,\ldots,\boldsymbol{\varepsilon}_n$ are i.i.d. $\mathcal{N}_p(\bzero, \bSigma)$, $\bnu\sim f_{\bnu}$ and $\boldsymbol{\varepsilon}_1,\ldots,\boldsymbol{\varepsilon}_n$, $\bnu$ are independent. In the random coefficient growth-curve model, it is typically assumed that $\boldsymbol{\varepsilon}_i\sim\mathcal{N}_p(\bzero, \sigma^2\bI)$ and $\bnu\sim\mathcal{N}_q(\bzero, \bol{\Omega})$ (see, e.g., \cite{rao1965}). As a result, the suggested MVLMN model may be useful for studying the robustness of the random coefficients against the non-normality.

%--------------------------------------------------------------------------------------
% SECTION 3
%--------------------------------------------------------------------------------------

\section{CLTs for expressions involving the sample covariance matrix and the sample mean vector}

The sample estimators for the mean vector and the covariance matrix are given by
\begin{eqnarray*}
\overline{\mathbf{x}}
=
\frac{1}{n}  \sum_{i=1}^{n}  \mathbf{x}_{i}
=
\frac{1}{n}  \mathbf{X}  \mathbf{1}_{n}
\ \ \ \  \mathrm{and}\ \ \ \ \
\mathbf{S}
=
\frac{1}{n-1} \sum_{i=1}^{n} (\mathbf{x}_{i}-\overline{\mathbf{x}}) (\mathbf{x}_{i}-\overline{\mathbf{x}})^\top
=\mathbf{X} \mathbf{VX} ^\top,
\end{eqnarray*}
where $\mathbf{V}=\mathbf{I}_{n}-\frac{1}{n}\mathbf{1}_{n}\mathbf{1}_{n}^\top$ is a symmetric idempotent matrix, i.e., $\mathbf{V}=\mathbf{V}^\top$ and $\mathbf{V}^2=\mathbf{V}$.

The following proposition shows that $\overline{\mathbf{x}}$ and $\mathbf{S}$ are independently distributed and presents their marginal distributions under model (\ref{model}).
Moreover, its results lead to the conclusion that the independence of $\overline{\mathbf{x}}$ and $\mathbf{S}$ could not be used as a characterization property of a multivariate normal distribution if the observation vectors in data matrix are dependent.

%%%%%%%%%%%%%%%%%%%%%%%%%%%%%%%%%%%%%%%%%%%%%%%%%%%%%%%%%%%%%%%%%%%%%%%%%%%%%%%%%%%%%%%%%%%%%%%%%%%%%%%%%%%%%%%%%%%%%%%%%%%%%%%%%%%%%%%%%%%%%%%%%%%%%%%%%%%%%%%%%%%%%
%--------------------------------------------------------------------------------------
% THEOREM 1
%--------------------------------------------------------------------------------------
\begin{proposition}
\label{th1}
Let $\mathbf{X}\sim \mathcal{LMN}_{p,n;q} (\bmu,\mathbf{\Sigma} , \mathbf B ;  f_{\bnu} )$.
Then
\begin{enumerate}[(a)]
\item
$(n-1)\mathbf{S} \sim \mathcal{W}_{p}(n-1,\mathbf{\Sigma})$ ($p$-dimensional Wishart distribution for $p \le n-1$ and $p$-dimensional singular Wishart distribution for $p >n-1$ with $(n-1)$ degrees of freedom and covariance matrix $\mathbf{\Sigma}$),
\item
$\overline{\mathbf{x}}\sim \mathcal{LMN}_{p;q} \left( \bmu, \frac{1}{n}\mathbf{\Sigma}, \mathbf B; f_{\bnu} \right)$,
\item
$\mathbf{S}$ and $\overline{\mathbf{x}}$ are independently distributed.
\end{enumerate}
\end{proposition}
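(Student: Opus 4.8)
The plan is to condition on $\bnu$ and reduce everything to the classical Gaussian case, then integrate out $\bnu$. Write $\bY = \bmu\mathbf{1}_n^\top + \bSigma^{1/2}\bZ$ with $\bZ \sim \mathcal{N}_{p,n}(\mathbf{0}, \bI_p\otimes\bI_n)$, so that conditionally on $\bnu = \bnu^*$ we have $\bX \stackrel{d}{=} (\bmu + \bB\bnu^*)\mathbf{1}_n^\top + \bSigma^{1/2}\bZ$, i.e. $\bX \mid \bnu^* \sim \mathcal{N}_{p,n}\big((\bmu+\bB\bnu^*)\mathbf{1}_n^\top, \bSigma\otimes\bI_n\big)$. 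The key algebraic observation is that $\bS = \bX\bV\bX^\top$ depends on $\bX$ only through $\bX\bV$, and since $\bV\mathbf{1}_n = \mathbf{0}$, the shift term $\bB\bnu^*\mathbf{1}_n^\top\bV = \mathbf{0}$ vanishes; hence $\bX\bV \stackrel{d}{=} \bY\bV$ conditionally on $\bnu^*$, \emph{and this conditional law does not depend on $\bnu^*$ at all}. Meanwhile $\overline{\bx} = \frac1n\bX\mathbf{1}_n \stackrel{d}{=} \bmu + \bB\bnu^* + \frac1n\bSigma^{1/2}\bZ\mathbf{1}_n$, so conditionally $\overline{\bx}\mid\bnu^* \sim \mathcal{N}_p(\bmu+\bB\bnu^*, \tfrac1n\bSigma)$.

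For part (a): conditionally on $\bnu^*$, the matrix $\bY$ has i.i.d.\ columns (after centering), so $(n-1)\bS = \bX\bV\bX^\top$ follows the (possibly singular) Wishart law $\mathcal{W}_p(n-1,\bSigma)$ by the standard result that $\bG\bV\bG^\top \sim \mathcal{W}_p(n-1,\bSigma)$ when $\bG$ has i.i.d.\ $\mathcal{N}_p(\cdot,\bSigma)$ columns and $\bV$ is the centering projector of rank $n-1$; crucially this conditional distribution is free of $\bnu^*$, so it is also the unconditional distribution. For part (b): the conditional law of $\overline{\bx}$ given $\bnu^*$ is exactly the Gaussian $\mathcal{N}_p(\bmu+\bB\bnu^*,\tfrac1n\bSigma)$, which is precisely the statement that $\overline{\bx}\stackrel{d}{=}\bY_0 + \bB\bnu$ with $\bY_0\sim\mathcal{N}_p(\bmu,\tfrac1n\bSigma)$ independent of $\bnu$; comparing with the defining model \eqref{model} at $n=1$ gives $\overline{\bx}\sim\mathcal{LMN}_{p;q}(\bmu,\tfrac1n\bSigma,\bB;f_{\bnu})$.

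For part (c), the independence: conditionally on $\bnu^*$, $\bX\bV$ and $\overline{\bx}$ (equivalently $\bY\bV$ and $\bY\mathbf{1}_n$) are independent because for a Gaussian matrix $\bY$ with covariance $\bSigma\otimes\bI_n$, the linear images $\bY\bV$ and $\bY\mathbf{1}_n$ are jointly Gaussian with cross-covariance proportional to $\bV\mathbf{1}_n = \mathbf{0}$. Hence $\bS \perp \overline{\bx}$ conditionally on $\bnu^*$. To lift this to unconditional independence I would use the factorization criterion: for bounded measurable $g,h$,
\begin{eqnarray*}
\mathbb{E}[g(\bS)h(\overline{\bx})]
&=& \mathbb{E}\big[\mathbb{E}[g(\bS)\mid\bnu]\,\mathbb{E}[h(\overline{\bx})\mid\bnu]\big]
 = \mathbb{E}\big[\mathbb{E}[g(\bS)]\cdot\mathbb{E}[h(\overline{\bx})\mid\bnu]\big],
\end{eqnarray*}
where the last step uses that $\mathbb{E}[g(\bS)\mid\bnu]$ is a constant (part (a)); pulling this constant out yields $\mathbb{E}[g(\bS)]\,\mathbb{E}[h(\overline{\bx})]$, which is the desired independence. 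The main subtlety — really the only one — is the bookkeeping that makes the conditional law of $\bS$ genuinely independent of $\bnu^*$; this is exactly the point $\bV\mathbf{1}_n=\mathbf{0}$, and once it is made explicit the rest is routine. A clean way to package all three parts simultaneously is to note that $(\bX\bV,\ \bX\mathbf{1}_n)$ conditionally on $\bnu^*$ is jointly Gaussian with a block-diagonal covariance whose first block is $\bSigma\otimes\bV$ (free of $\bnu^*$) and whose second block is $n\bSigma$ with mean $n(\bmu+\bB\bnu^*)$; reading off the blocks gives (a), (b), (c) at once.
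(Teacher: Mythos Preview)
Your proof is correct and follows essentially the same approach as the paper: the paper's proof rests on the two identities $\bS=\bX\bV\bX^\top=\bY\bV\bY^\top$ (using $\mathbf{1}_n^\top\bV=\mathbf{0}$) and $\overline{\bx}=\overline{\by}+\bB\bnu$, then invokes the classical Gaussian facts for $\bY$ together with $\bY\perp\bnu$. You arrive at the same identities but frame the argument through conditioning on $\bnu$ and un-conditioning, which is a slightly more verbose but equivalent packaging of the same content.
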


%--------------------------------------------------------------------------------------
% PROOF THEOREM  1
%--------------------------------------------------------------------------------------

\begin{proof}
The statements of the proposition follow immediately from the fact that
    \begin{eqnarray}\label{proof of theorem 1}
      \bar{\bx}=\frac{1}{n}\sum\limits_{i=1}^n\bx_i=\bar{\by}+\bB\bnu~~\text{and}~~\bS=\bX\bV\bX^\top=\bY\bV\bY^\top\,.
    \end{eqnarray}
Indeed, from (\ref{proof of theorem 1}), the multivariate normality of $\bY$ and the independence of $\bY$ and $\bnu$, we get that $\overline{\mathbf{x}}$ and $\mathbf{S}$ are independent; $\mathbf{S}$ is (singular) Wishart distributed with $(n-1)$ degrees of freedom and covariance matrix $\mathbf{\Sigma}$; $\overline{\mathbf{x}}$ has a location mixture of normal distributions with parameters $\bmu$, $\frac{1}{n}\mathbf{\Sigma}$, $\mathbf B$ and $f_{\bnu}$.
\end{proof}

For the validity of the asymptotic results presented in Sections 3.1 and 3.2 we need the following two conditions

\begin{itemize}
  \item[(A1)] Let $(\lambda_i,\mathbf{u}_i)$ denote the set of eigenvalues and eigenvectors of $\mathbf{\Sigma}$. We assume that there exist $m_1$ and $M_1$ such that
  \[0<m_1\le \lambda_1 \le \lambda_2 \le ... \le \lambda_p \le M_1<\infty\]
  uniformly in $p$.
  \item[(A2)] There exists $M_2<\infty$ such that
  \[|\mathbf{u}_i^\top \bmu| \le M_2 ~~ \text{and} ~~ |\mathbf{u}_i^\top \mathbf{b}_j| \le M_2 ~~\text{for all}~~i=1,...,p ~~\text{and}~~j=1,...,q \]
  uniformly in $p$ where $\mathbf{b}_j$, $j=1,...,q$, are the columns of $\mathbf{B}$.
\end{itemize}
Generally, we say that an arbitrary $p$-dimensional vector $\bl$ satisfies the condition (A2) if $|\mathbf{u}_i^\top \bl| \le M_2< \infty$ for all $i=1,\ldots,p$.

Assumption $\mathbf{(A1)}$ is a classical condition in random matrix theory (see, \citet{baisil2004}), which bounds the spectrum of $\bSigma$ from below as well as from above. Assumption $\mathbf{(A2)}$ is a technical one. In combination with $\mathbf{(A1)}$ this condition ensures that $p^{-1}\bmu^\top \bSigma \bmu$, $p^{-1}\bmu^\top \bSigma^{-1}\bmu$, $p^{-1}\bmu^\top \bSigma^3 \bmu$, $p^{-1}\bmu^\top \bSigma^{-3} \bmu$, as well as that all the diagonal elements of $\mathbf{B}^\top \bSigma \mathbf{B}$, $\mathbf{B}^\top \bSigma^3 \mathbf{B}$, and $\mathbf{B}^\top \bSigma^{-1} \mathbf{B}$ are uniformly bounded. All these quadratic forms are used in the statements and the proofs of our results.  Note that the constants appearing in the inequalities will be denoted by $M_2$ and may vary from one expression to another.
%
%  \begin{remark} \rm
  We further note, that assumption (A2) is automatically fulfilled if $\bmu$ and $\bB$ are sparse (not $\bSigma$ itself). More precisely, a stronger condition, which verifies (A2) is $||\bmu||<\infty$ and $||\mathbf{b}_j||<\infty$ for $j=1,\ldots, q$ uniformly in $p$. Indeed, using the Cauchy-Schwarz inequality to $(\bu_i^\top\bmu)^2$ we get
  \begin{eqnarray}
(\mathbf{u}_i^\top \bmu)^2 \leq ||\bu_i||^2 ||\bmu||^2=||\bmu||^2<\infty\,.
  \end{eqnarray}
It is remarkable that $||\bmu||<\infty$ is fulfilled if $\bmu$ is indeed a sparse vector or if all its elements are of order $O\left(p^{-1/2}\right)$. Thus, a sufficient condition for (A2) to hold would be either sparsity of $\bmu$ and $\bB$ or their elements are reasonably small but not exactly equal to zero. Note that the assumption of sparsity for $\bB$ is quite natural in the context of high-dimensional random coefficients regression models. It implies that if $\bB$ is large dimensional, then it may have many zeros and there exists a small set of highly significant random coefficients which drive the random effects in the model (see, e.g., \cite{bhlmann2011}). Moreover, it is hard to estimate $\bmu$ in a reasonable way when $||\bmu||\to\infty$ as $p \to \infty$ (see, e.g., \cite{BodnarOkhrinParolya2016}). In general, however, $||\bmu||$ and $||\mathbf{b}_j||$ do not need to be bounded, in such case the sparsity of eigenvectors of matrix $\bSigma$ may guarantee the validity of (A2). Consequently, depending on the properties of the given data set the proposed model may cover many practical problems.
% \end{remark}

%--------------------------------------------------------------------------------------
% SubSection 3.1
%--------------------------------------------------------------------------------------

\subsection{CLT for the product of sample covariance matrix and sample mean vector}

%%%%%%%%%%%%%%%%%%%%%%%%%%%%%%%%%%%%%%%%%%%%%%%%%%%%%%%%%%%%%%%%%%%%%%%%%%%%%%%%%%%%%%%%%%%%%%%%%%%%%%%%%%%%%%%%%%%%%%%%%%%%%%%%%%%%%%%%%%%%%%%%%%%%%%%%%%%%%%%%%
In this section we present the central limit theorem for the product of the sample covariance matrix and the sample mean vector.

%--------------------------------------------------------------------------------------
% THEOREM 2
%--------------------------------------------------------------------------------------
\begin{theorem}
\label{th2}
Assume $\mathbf X\sim  \mathcal{LMN}_{p,n;q} (\bmu,\mathbf{\Sigma} , \mathbf B ;  f_{\bnu} )$ with $\bSigma$ positive definite and let $p/n=c +o(n^{-1/2})$, $c\in[0, +\infty)$ as $n\to\infty$. Let $\bl$ be a $p$-dimensional vector of constants that satisfies condition $\mathbf{(A2)}$. Then, under $\mathbf{(A1)}$ and $\mathbf{(A2)}$ it holds that
\begin{eqnarray}
\label{th2eq1}
\sqrt{n}\sigma^{-1}_{\bnu}\left(\bl^\top\bS\overline{\bx}-\bl^\top\bSigma\bmu_{\bnu}\right)  \overset{\mathcal{D}}{\longrightarrow} \mathcal{N}(0,1)~~\text{for}~p/n\to c\in[0, +\infty) ~~\text{as}~~n\to\infty\,,
\end{eqnarray}

\vspace{5mm}

where

\vspace{-15mm}

\begin{eqnarray}
 \bmu_{\bnu}& =& \bmu+\bB\bnu,\\[2mm]
\sigma^2_{\bnu}&=&\left[\bmu^\top_{\bnu}\bSigma\bmu_{\bnu}+ c\frac{\text{tr}(\bSigma^2)}{p}\right]\bl^\top\bSigma\bl+(\bl^\top\bSigma\bmu_{\bnu})^2 +\bl^\top \bSigma^3 \bl\,.
\end{eqnarray}
\end{theorem}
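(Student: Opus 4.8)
The plan is to reduce to the Gaussian situation by conditioning on $\bnu$, and then to use a conditional stochastic representation of $\bl^\top\bS\overline{\bx}$ given $\bS$. By the proof of Proposition~\ref{th1}, conditionally on $\bnu$ one has $\overline{\bx}\overset{d}{=}\bmu_{\bnu}+n^{-1/2}\bSigma^{1/2}\bz$ with $\bz\sim\mathcal{N}_p(\bzero,\bI_p)$, while $(n-1)\bS\sim\mathcal{W}_p(n-1,\bSigma)$ is independent of $\bz$; for almost every realization of $\bnu$ the vector $\bmu_{\bnu}=\bmu+\bB\bnu$ is a fixed vector satisfying $\mathbf{(A2)}$ (with a constant depending on $\bnu$). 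Since the limit in (\ref{th2eq1}) does not depend on $\bnu$, it suffices to prove the convergence conditionally on $\bnu$, treating $\bmu_{\bnu}$ as deterministic, and then to remove the conditioning by dominated convergence for the conditional characteristic functions together with L\'evy's continuity theorem. Conditionally on $\bS$ the map $\bz\mapsto\bl^\top\bS\overline{\bx}$ is affine Gaussian, and $\bl^\top\bSigma\bmu_{\bnu}=\mathbb{E}[\bl^\top\bS\overline{\bx}\mid\bnu]$ is exactly its conditional mean, so one obtains
\begin{equation*}
\sqrt{n}\left(\bl^\top\bS\overline{\bx}-\bl^\top\bSigma\bmu_{\bnu}\right)\overset{d}{=}\underbrace{\sqrt{n}\,\bl^\top(\bS-\bSigma)\bmu_{\bnu}}_{=:\,\xi_n}\;+\;\underbrace{\sqrt{\bl^\top\bS\bSigma\bS\bl}\;\eta}_{=:\,\zeta_n}\,,
\end{equation*}
with $\eta\sim\mathcal{N}(0,1)$ independent of $\bS$.

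Next I would determine the limiting behaviour of $\xi_n$ and $\zeta_n$. Using $\bS=\bY\bV\bY^\top$ together with the fact that $\bV$ is idempotent of rank $n-1$, the quantity $\bl^\top\bS\bmu_{\bnu}$ is $(n-1)^{-1}$ times a sum of $n-1$ i.i.d.\ products of two jointly Gaussian random variables, so a Lyapunov CLT applies (the standardized fourth moment of such a product depends only on their correlation and is therefore bounded uniformly in $p$), giving $\xi_n/\sqrt{v_1}\overset{\mathcal{D}}{\longrightarrow}\mathcal{N}(0,1)$ with $v_1=(\bl^\top\bSigma\bl)(\bmu_{\bnu}^\top\bSigma\bmu_{\bnu})+(\bl^\top\bSigma\bmu_{\bnu})^2$. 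For $\zeta_n$, the Wishart moment identity $\mathbb{E}[\mathbf{A}\mathbf{M}\mathbf{A}]=k^2\bSigma\mathbf{M}\bSigma+k\,\mathrm{tr}(\mathbf{M}\bSigma)\bSigma+k\,\bSigma\mathbf{M}^\top\bSigma$ for $\mathbf{A}\sim\mathcal{W}_p(k,\bSigma)$ yields
\begin{equation*}
\mathbb{E}[\bl^\top\bS\bSigma\bS\bl]=\bl^\top\bSigma^3\bl+\frac{\mathrm{tr}(\bSigma^2)}{n-1}\bl^\top\bSigma\bl+\frac{1}{n-1}\bl^\top\bSigma^3\bl\,,
\end{equation*}
which, since $p/(n-1)\to c$, has the same leading order as $v_2:=\bl^\top\bSigma^3\bl+c\,p^{-1}\mathrm{tr}(\bSigma^2)\,\bl^\top\bSigma\bl$; combined with a bound showing that $\mathrm{Var}(\bl^\top\bS\bSigma\bS\bl)$ is of smaller order than $v_2^2$, this gives $\bl^\top\bS\bSigma\bS\bl/v_2\overset{\mathbb{P}}{\longrightarrow}1$. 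One checks directly that $v_1+v_2=\sigma^2_{\bnu}$.

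To combine the pieces I would condition on $\bS$ and use that $\zeta_n$ is Gaussian given $\bS$ and that $\eta$ is independent of $\bS$: for every $t\in\mathbb{R}$,
\begin{equation*}
\mathbb{E}\!\left[\exp\!\left(\mathrm{i}t\,\sigma_{\bnu}^{-1}(\xi_n+\zeta_n)\right)\right]=\mathbb{E}\!\left[\exp\!\left(\mathrm{i}t\,\sigma_{\bnu}^{-1}\xi_n\right)\exp\!\left(-\tfrac{1}{2}t^2\sigma_{\bnu}^{-2}\,\bl^\top\bS\bSigma\bS\bl\right)\right]\,.
\end{equation*}
Since $\sigma_{\bnu}^{-2}\bl^\top\bS\bSigma\bS\bl-v_2\,\sigma_{\bnu}^{-2}\to0$ in probability while $\sigma_{\bnu}^{-1}\xi_n$ is asymptotically $\mathcal{N}(0,v_1\sigma_{\bnu}^{-2})$, passing to a subsequence along which the ratios $v_1\sigma_{\bnu}^{-2}$ and $v_2\sigma_{\bnu}^{-2}$ (which lie in $[0,1]$ and sum to $1$) converge shows that the right-hand side tends to $\exp(-t^2/2)$. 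Hence $\sigma_{\bnu}^{-1}(\xi_n+\zeta_n)\overset{\mathcal{D}}{\longrightarrow}\mathcal{N}(0,1)$, and undoing the conditioning on $\bnu$ finishes the argument.

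The step I expect to be the main obstacle is the quantitative control of $\bl^\top\bS\bSigma\bS\bl$, i.e.\ proving $\mathrm{Var}(\bl^\top\bS\bSigma\bS\bl)=o(\sigma^4_{\bnu})$: this is an eighth-order Gaussian moment computation that has to be carried out uniformly in $p$ while $\mathbf{(A1)}$--$\mathbf{(A2)}$ only guarantee that the normalized forms $p^{-1}\bmu_{\bnu}^\top\bSigma\bmu_{\bnu}$, $p^{-1}\bl^\top\bSigma\bl$, $p^{-1}\mathrm{tr}(\bSigma^2)$, etc.\ are bounded rather than convergent, so one must carefully separate the contributions of order $p$ from those of order $p^2$. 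A secondary, bookkeeping-type difficulty is that the two asymptotically independent Gaussian contributions have variances that may themselves grow with $p$, which is why the last step is organized through conditional characteristic functions and a subsequence reduction rather than a single triangular-array CLT.
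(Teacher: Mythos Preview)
Your argument is correct and takes a genuinely different route from the paper. The paper conditions in the \emph{opposite} direction: it fixes $\overline{\bx}$ first and uses the $2\times 2$ Wishart block structure of $(\bl,\overline{\bx})^\top\bS(\bl,\overline{\bx})$ (Muirhead, Theorem~3.2.10) to obtain the exact stochastic representation
\[
\bl^\top\bS\overline{\bx}\;\stackrel{d}{=}\;\frac{\xi}{n-1}\,\bl^\top\bSigma\overline{\bx}
+\sqrt{\frac{\xi}{(n-1)^2}}\,\bigl(\overline{\bx}^\top\bSigma\overline{\bx}\,\bl^\top\bSigma\bl-(\bl^\top\bSigma\overline{\bx})^2\bigr)^{1/2}z_0,
\]
with $\xi\sim\chi^2_{n-1}$, $z_0\sim\mathcal{N}(0,1)$, $\overline{\bx}$ mutually independent. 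It then proves a joint CLT for $(\overline{\bx}^\top\bSigma\overline{\bx},\,\bl^\top\bSigma\overline{\bx})$ conditionally on $\bnu$ via a Lindeberg argument after writing $\overline{\bx}^\top\bSigma\overline{\bx}$ as a weighted sum of independent noncentral $\chi^2_1$'s in the eigenbasis of $\bSigma$, and finishes with the multivariate delta method. By contrast, you condition on $\bS$, split $\sqrt{n}(\bl^\top\bS\overline{\bx}-\bl^\top\bSigma\bmu_{\bnu})$ into the Wishart bilinear form $\xi_n=\sqrt{n}\,\bl^\top(\bS-\bSigma)\bmu_{\bnu}$ and a conditionally Gaussian piece with random variance $\bl^\top\bS\bSigma\bS\bl$, and recombine through characteristic functions and a subsequence reduction. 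Your Lyapunov step for $\xi_n$ is clean (second Wiener chaos bounds give $E[W_i^4]\le C\,v_1^2$ uniformly), and your identification $v_1+v_2=\sigma^2_{\bnu}$ is correct.

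The trade--off is exactly the one you flag. The paper never needs the concentration of $\bl^\top\bS\bSigma\bS\bl$: the Lindeberg check for the noncentral $\chi^2$ sum is a short estimate using only $\mathbf{(A1)}$--$\mathbf{(A2)}$, and the stochastic representation is also exploited later for exact finite--sample simulation (Section~4). Your route is more elementary conceptually, but the price is the eighth--order Wishart moment bound $\mathrm{Var}(\bl^\top\bS\bSigma\bS\bl)=o(v_2^2)$; it does hold (writing $(n-1)\bS=\sum_i\by_i\by_i^\top$ and counting index coincidences shows $\mathrm{Var}(\bl^\top\bS\bSigma\bS\bl)=O(v_2^2/n)$ uniformly over $c\in[0,\infty)$), but it is noticeably more laborious than what the paper actually does.
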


%------------------------------------------------------------
\begin{proof}
First, we consider the case of $p \le n-1$, i.e., $\mathbf{S}$ has a Wishart distribution. Let $\mathbf L = (\bl,\overline{\mathbf{x}})^\top$ and define
$\widetilde{\mathbf{S}}=\mathbf L \mathbf{S}\mathbf L^\top = \{\widetilde{\mathbf{S}}_{ij}\}_{i,j=1,2}$
with
$\widetilde{S}_{11}=\bl^\top\mathbf{S}\bl$,
$\widetilde{S}_{12}=\bl^\top\mathbf{S}\overline{\mathbf{x}}$,
$\widetilde{S}_{21}=\overline{\mathbf{x}}^\top\mathbf{S}\bl$,
and
$\widetilde{S}_{22}=\overline{\mathbf{x}}^\top\mathbf{S}\overline{\mathbf{x}}$.
Similarly, let
$\widetilde{\mathbf{\Sigma}}=\mathbf{L}\mathbf{\Sigma}\mathbf{L}^\top=\{\widetilde{\mathbf{\Sigma}}_{ij}\}_{i,j=1,2}$ with
$\widetilde{\Sigma}_{11}=\bl^\top\mathbf{\Sigma}\bl$,
$\widetilde{\Sigma}_{12}=\bl^\top\mathbf{\Sigma}\overline{\mathbf{x}}$, $\widetilde{\Sigma}_{21}=\overline{\mathbf{x}}^\top \mathbf{\Sigma}\bl$, and $\widetilde{\Sigma}_{22}=\overline{\mathbf{x}}^\top\mathbf{\Sigma}\overline{\mathbf{x}}$.

Since $\bS$ and $\overline{\mathbf{x}}$ are independently distributed, $\mathbf{S}\sim \mathcal{W}_{p}\left(n-1,{\dfrac{1}{n-1}}\mathbf{\Sigma}\right)$ and $rank~\mathbf{L}=2\leq p$ with probability one, we get from Theorem 3.2.5 of \citet{Muirhead1982} that $ \widetilde{\mathbf{S}}|\overline{\mathbf{x}} \sim \mathcal{W}_{2}\left(n-1,{\dfrac{1}{n-1}}\widetilde \bSigma\right)$. As a result, the application of Theorem 3.2.10 of \citet{Muirhead1982} leads to
\begin{eqnarray*}
\widetilde{S}_{12}|\widetilde{S}_{22},\overline{\mathbf{x}}
\sim
 \mathcal{N}\left(\widetilde{\Sigma}_{12}\widetilde{\Sigma}_{22}^{-1}\widetilde{S}_{22},{\dfrac{1}{n-1}}\widetilde{\Sigma}_{11\cdot 2}\widetilde{S}_{22}\right),
\end{eqnarray*}
where $\widetilde{\Sigma}_{11\cdot2} = \widetilde{\Sigma}_{11} - \widetilde{\Sigma}_{12}^2/ \widetilde{\Sigma}_{22}$ is the Schur complement.

Let $\xi=(n-1)\widetilde{S}_{22}/\widetilde{\Sigma}_{22}$, then
\begin{eqnarray*}
\bl^\top\mathbf{S}\overline{\mathbf{x}}|\xi,\overline{\mathbf{x}}\sim \mathcal{N}\left({\dfrac{\xi}{n-1}}\bl^\top\mathbf{\Sigma}\overline{\mathbf{x}},{\dfrac{\xi}{(n-1)^2}}[\overline{\mathbf{x}}^\top\mathbf{\Sigma}\overline{\mathbf{x}}\bl^\top\mathbf{\Sigma }\bl -(\overline{\mathbf{x}}^\top\mathbf{\Sigma}\bl)^2]\right).
\end{eqnarray*}

From Theorem 3.2.8 of \citet{Muirhead1982} it follows that $\xi$ and $\overline{\mathbf{x}}$ are independently distributed and $\xi \sim \chi^2 _{n-1}$.
Hence, the stochastic representation of $\bl^\top\mathbf{S}\overline{\mathbf{x}}$ is given by
\begin{eqnarray}\label{stoch1}
\bl^\top\mathbf{S}\overline{\mathbf{x}}
\stackrel{d}{=}
{\dfrac{\xi}{n-1}}\bl^\top\mathbf{\Sigma}\overline{\mathbf{x}}
+
{\sqrt{\dfrac{\xi}{n-1}}}
(\overline{\mathbf{x}}^\top\mathbf{\Sigma}\overline{\mathbf{x}}\bl^\top\mathbf{\Sigma}\bl-(\bl^\top\mathbf{\Sigma}\overline{\mathbf{x}})^2)^{1/2}
{\dfrac{z_{0}}{\sqrt{n-1}}},
\end{eqnarray}
where
$\xi \sim  \chi^{2}_{{n-1}}$,
$ z_0\sim \mathcal{N} (0, 1)$,
$\overline{\mathbf{x}} \sim  \mathcal{LMN}_{p;q}\left(\bmu,\frac{1}{n}\mathbf{\Sigma}, \mathbf B ;  f_{\bnu}\right)$;
 $\xi$, $z_{0}$ and $\overline{\mathbf{x}}$ are mutually independent. The symbol ''$\stackrel{d}{=}$'' stands for the equality in distribution.
 
It is remarkable that the stochastic representation \eqref{stoch1} for $\bl^\top\mathbf{S}\overline{\mathbf{x}}$ remains valid also in the case of $p>n-1$ following the proof of Theorem 4 in \citet{BodnarMazurOkhrin2014}. Hence, we will make no difference between these two cases in the remaining part of the proof.

From the properties of $\chi^2$-distribution and using the fact that $n/(n-1) \to 1$ as $n \to \infty$, we immediately receive
\begin{eqnarray}\label{chi_prop}
  \sqrt{n}\left(\frac{\xi}{n}-1\right)& \overset{\mathcal{D}}{\longrightarrow}& \mathcal{N}(0, 2) ~~\text{as}~~ n\to\infty\,.
\end{eqnarray}
We further get that $\sqrt{n}(z_0/\sqrt{n})\sim \mathcal{N}(0, 1)$ for all $n$ and, consequently, it is also its asymptotic distribution.

Next, we show that $\bl^\top\mathbf{\Sigma}\overline{\mathbf{x}}$ and $\overline{\mathbf{x}}^\top\mathbf{\Sigma}\overline{\mathbf{x}}$ are jointly asymptotically normally distributed given $\bnu$. For any $a_1$ and $a_2$, we consider
\begin{eqnarray*}
&&a_1\overline{\mathbf{x}}^\top\mathbf{\Sigma}\overline{\mathbf{x}}+2a_2\bl^\top\mathbf{\Sigma}\overline{\mathbf{x}}
= a_1 \left(\overline{\mathbf{x}}+\dfrac{a_2}{a_1}\bl\right)^\top\mathbf{\Sigma}\left(\overline{\mathbf{x}}+\dfrac{a_2}{a_1}\bl\right)
-\dfrac{a_2^2}{a_1}\bl^\top\mathbf{\Sigma}\bl=a_1 \tilde{\mathbf{x}}^\top\mathbf{\Sigma}\tilde{\mathbf{x}}-\dfrac{a_2^2}{a_1}\bl^\top\mathbf{\Sigma}\bl \,,
\end{eqnarray*}
where $\tilde{\mathbf{x}}=\bar{\mathbf{x}}+\frac{a_2}{a_1}\mathbf{1}$ and $\tilde{\mathbf{x}}|\bnu \sim \mathcal{N}_p \left(\bmu_{\mathbf{a},\bnu},\frac{1}{n}\mathbf{\Sigma}\right)$ with $\bmu_{\mathbf{a},\bnu}=\bmu+\mathbf{B}\bnu+\dfrac{a_2}{a_1}\bl$.
By \cite{provost1996} the stochastic representation of $\tilde{\mathbf{x}}^\top\mathbf{\Sigma}\tilde{\mathbf{x}}$ is given by
  \begin{eqnarray*}
    \tilde{\mathbf{x}}^\top\mathbf{\Sigma}\tilde{\mathbf{x}} \overset{d}{=} \frac{1}{n}\sum\limits_{i=1}^p\lambda^2_i\xi_i\,,
  \end{eqnarray*}
where $\xi_1,...,\xi_p$ given $\bnu$ are independent with $\xi_i|\bnu \sim \chi^2_1(\delta^2_i)$, $\delta_i=\sqrt{n}\lambda^{-1/2}_i\bu_i^\top\bmu_{\mathbf{a},\bnu}$. Here, the symbol $\chi^2_d(\delta^2_i)$ denotes a chi-squared distribution with $d$ degrees of freedom and non-centrality parameter $\delta^2_i$.

Now, we apply the Lindeberg CLT to the conditionally independent random variables $V_i=\lambda^2_i\xi_i/n$. For that reason, we need first to verify the Lindeberg's condition. Denoting $\sigma^2_n=\mathbbm{V}(\sum_{i=1}^p V_i|\bnu)$ we get
\begin{eqnarray}\label{variance_clt}
  \sigma^2_n = \sum\limits_{i=1}^p \mathbbm{V}\left[\left.\frac{\lambda^2_i}{n}\xi_i\right|\bnu\right]=\sum\limits_{i=1}^p\frac{\lambda_i^4}{n^2}2(1+2\delta_i^2)= \frac{1}{n^2}\left(2\text{tr}(\bSigma^4) + 4n\bmu_{\mathbf{a},\bnu}^\prime\bSigma^{3}\bmu_{\mathbf{a},\bnu} \right)
\end{eqnarray}
We need to check if for any small $\varepsilon>0$ it holds that
\begin{eqnarray*}
  \lim_{{n\to\infty}}\frac{1}{\sigma^2_n}\sum\limits_{i=1}^p\mathbbm{E}\left[\left.\left(V_i-\mathbbm{E}(V_i)\right)^2\mathbbm{1}_{\{|V_i-\mathbbm{E}(V_i)|>\varepsilon\sigma_n \}}\right|\bnu\right] \longrightarrow0\,.
\end{eqnarray*}
First, we get
\begin{eqnarray*}
 &&\sum_{i=1}^p\mathbbm{E}\left[\left.\left(V_i-\mathbbm{E}(V_i|\bnu)\right)^2\mathbbm{1}_{\{|V_i-\mathbbm{E}(V_i|\bnu)|>\varepsilon\sigma_n \}}\right|\bnu\right]\\
 & \overset{Cauchy-Schwarz}{\leq}&
 \sum_{i=1}^p\mathbbm{E}^{1/2}\left[\left.\left(V_i-\mathbbm{E}(V_i|\bnu)\right)^4\right|\bnu\right]
 \mathbbm{P}^{1/2}\{\left.|V_i-\mathbbm{E}(V_i|\bnu)|>\varepsilon\sigma_n\right|\bnu\}\\
&\overset{Chebyshev}{\leq}& \sum_{i=1}^p\frac{\lambda_i^4}{n^2} \sqrt{12(1+2\delta^2_i)^2+48(1+4\delta_i^2)}\frac{\sigma_i}{\varepsilon\sigma_n}
\end{eqnarray*}
with $\sigma^2_i=\mathbbm{V}(V_i|\bnu)$ and, thus,
\begin{eqnarray*}
 && \frac{1}{\sigma^2_n}\sum\limits_{i=1}^p\mathbbm{E}\left[\left.\left(V_i-\mathbbm{E}(V_i|\bnu)\right)^2
 \mathbbm{1}_{\{|V_i-\mathbbm{E}(V_i|\bnu)|>\varepsilon\sigma_n \}}\right|\bnu\right]\\
 & \leq& \frac{1}{\varepsilon}\frac{\sum_{i=1}^p\lambda_i^4 \sqrt{12(1+2\delta^2_i)^2+48(1+4\delta_i^2)}\frac{\sigma_i}{\sigma_n}}{2\text{tr}(\bSigma^4) + 4n\bmu_{\mathbf{a},\bnu}^\top\bSigma^{3}\bmu_{\mathbf{a},\bnu}}\\
&=& \frac{\sqrt{3}}{\varepsilon}\frac{\sum_{i=1}^p\lambda_i^4 \sqrt{(5+2\delta^2_i)^2-20}\frac{\sigma_i}{\sigma_n}}{\text{tr}(\bSigma^4) + 2n\bmu_{\mathbf{a},\bnu}^\top\bSigma^{3}\bmu_{\mathbf{a},\bnu}   }\\
&\leq&\frac{\sqrt{3}}{\varepsilon}\frac{\sum_{i=1}^p\lambda_i^4 (5+2\delta^2_i)\frac{\sigma_i}{\sigma_n}}{\text{tr}(\bSigma^4) + 2n\bmu_{\mathbf{a},\bnu}^\top\bSigma^{3}\bmu_{\mathbf{a},\bnu}}\\
&\le& \frac{\sqrt{3}}{\varepsilon}\frac{5\text{tr}(\bSigma^4)+2n\bmu_{\mathbf{a},\bnu}^\top\bSigma^3\bmu_{\mathbf{a},\bnu}}{\text{tr}(\bSigma^4) + 2n\bmu_{\mathbf{a},\bnu}^\top\bSigma^{3}\bmu_{\mathbf{a},\bnu}}\frac{\sigma_{max}}{\sigma_n}\\
&\le& \frac{\sqrt{3}}{\varepsilon}\left(\frac{4}{1+ 2n\bmu_{\mathbf{a},\bnu}^\top\bSigma^{3}\bmu_{\mathbf{a},\bnu}/\text{tr}(\bSigma^4)}+1\right)\frac{\sigma_{max}}{\sigma_n}\\
&\le& \frac{5\sqrt{3}}{\varepsilon}\frac{\sigma_{max}}{\sigma_n}\,.
\end{eqnarray*}
Finally, Assumptions $\mathbf{(A1)}$ and $\mathbf{(A2)}$ yield
\begin{eqnarray}\label{lindeberg}
  \frac{\sigma^2_{max}}{\sigma^2_n} = \frac{\sup_i\sigma^2_i}{\sigma_n^2} = \frac{\sup_i\lambda_i^4(1+2\delta_i^2)}{\text{tr}(\bSigma^4) + 2n\bmu_{\mathbf{a},\bnu}^\top\bSigma^{3}\bmu_{\mathbf{a},\bnu}}=
  \frac{\sup_i \lambda_i^4+2n\lambda_i^3(\bu_i^\top\bmu_{\mathbf{a},\bnu})^2}
  {\text{tr}(\bSigma^4) + 2n\bmu_{\mathbf{a},\bnu}^\top\bSigma^{3}\bmu_{\mathbf{a},\bnu}}\longrightarrow0\,,
\end{eqnarray}
which verifies the Lindeberg condition since
\begin{eqnarray}
(\bu_i^\top\bmu_{\mathbf{a},\bnu})^2&=&\left(\bu_i^\top\bmu+\bu_i^\top\bB\bnu+\bu_i^\top\bl\frac{a_2}{a_1}\right)^2\nonumber\\
&=& (\bu_i^\prime\bmu)^2+\left( \bu_i^\top\bl\frac{a_2}{a_1}\right)^2 +(\bu_i^\top\bB\bnu)^2 +2\bu_i^\top\bmu\cdot\bu_i^\top\bB\bnu +2\frac{a_2}{a_1}\bu_i^\top\bl(\bu_i^\top\bmu+\bu_i^\top\bB\bnu)    \nonumber  \\
&\overset{(A2)}{\le}& M^2_2+qM_2^2\bnu^{\top}\bnu+  M^2_2\frac{a_2^2}{a^2_1}+ 2M^2_2\sqrt{q\bnu^{\top}\bnu}+  2M^2_2\frac{|a_2|}{|a_1|}(1+\sqrt{q\bnu^\top\bnu})\nonumber\\
&=& M_2^2\left(1+\sqrt{q\bnu^{\top}\bnu}+\frac{|a_2|}{|a_1|} \right)^2 < \infty.
\end{eqnarray}

Thus, using \eqref{variance_clt} and
\begin{eqnarray}\label{mean_clt}
\sum_{i=1}^p\mathbbm{E}(V_i|\bnu)=\sum_{i=1}^p\frac{\lambda^2_i}{n}(1+\delta^2_i) = \text{tr}(\bSigma^2)/n + \bmu_{\mathbf{a},\bnu}^\top\bSigma\bmu_{\mathbf{a},\bnu}
\end{eqnarray}
we get that
\begin{eqnarray*}
\sqrt{n} \frac{ \tilde{\mathbf{x}}^\top\mathbf{\Sigma}\tilde{\mathbf{x}} - \text{tr}(\bSigma^2)/n -\bmu_{\mathbf{a},\bnu}^\top\bSigma\bmu_{\mathbf{a},\bnu}}{\sqrt{\text{tr}(\bSigma^4)/n + 2\bmu_{\mathbf{a},\bnu}^\prime\bSigma^{3}\bmu_{\mathbf{a},\bnu}}} \Bigg|\bnu \overset{\mathcal{D}}{\longrightarrow} \mathcal{N}(0, 2)
\end{eqnarray*}
and for $a_1\overline{\mathbf{x}}^\top\mathbf{\Sigma}\overline{\mathbf{x}}+2a_2\bl^\top\mathbf{\Sigma}\overline{\mathbf{x}}$ we have
\begin{eqnarray*}
 \sqrt{n}\frac{ a_1\overline{\mathbf{x}}^\top\mathbf{\Sigma}\overline{\mathbf{x}}+2a_2\bl^\top\mathbf{\Sigma}\overline{\mathbf{x}}-a_1\left(\text{tr}(\bSigma^2)/n +\bmu_{\mathbf{a},\bnu}^\top\bSigma\bmu_{\mathbf{a},\bnu}\right)+\dfrac{a_2^2}{a_1}\bl^\top\mathbf{\Sigma}\bl}{ \sqrt{a_1^2\left(\text{tr}(\bSigma^4)/n + 2\bmu_{\mathbf{a},\bnu}^\prime\bSigma^{3}\bmu_{\mathbf{a},\bnu}\right)}}\Bigg|\bnu \overset{\mathcal{D}}{\longrightarrow} \mathcal{N}(0, 2)\,.
\end{eqnarray*}
Denoting $\mathbf{a}=(a_1, 2a_2)^\top$ and $\bmu_{\bnu}=\bmu+\bB\bnu$ we can rewrite it as
{\footnotesize
\begin{eqnarray}\label{mult_norm3}
\sqrt{n}\left[\mathbf{a}^\top
\left(
\begin{array}{c}
   \overline{\mathbf{x}}^\top\mathbf{\Sigma}\overline{\mathbf{x}} \\
   \bl^\top\mathbf{\Sigma}\overline{\mathbf{x}}
  \end{array}
\right) -
 \mathbf{a}^\top
\left(
\begin{array}{c}
   \bmu_{\bnu}^\top\mathbf{\Sigma}\bmu_{\bnu} + c\frac{\text{tr}(\bSigma^2)}{p}\\
   \bl^\top\mathbf{\Sigma}\bmu_{\bnu}
  \end{array}
\right)\right] \Bigg|\bnu \overset{\mathcal{D}}{\longrightarrow} \mathcal{N}\left(\mathbf{0}, \mathbf{a}^\top\left(
  \begin{array}{cc}
    2c\frac{\text{tr}(\bSigma^4)}{p}+4\bmu_{\bnu}^\top\mathbf{\Sigma}^3\bmu_{\bnu} &  2 \bl^\top\mathbf{\Sigma}^3\bmu_{\bnu}\\
 2 \bl^\top\mathbf{\Sigma}^3\bmu_{\bnu} & \bl^\top\bSigma^3\bl
  \end{array}
\right)    \mathbf{a}   \right)
\end{eqnarray}
}
which implies that the vector $\sqrt{n}\left(\overline{\mathbf{x}}^\top\mathbf{\Sigma}\overline{\mathbf{x}} -\bmu_{\bnu}^\top\mathbf{\Sigma}\bmu_{\bnu} - c\frac{\text{tr}(\bSigma^2)}{p},~  \bl^\top\mathbf{\Sigma}\overline{\mathbf{x}}-  \bl^\top\mathbf{\Sigma}\bmu_{\bnu}\right)^\top$ has asymptotically multivariate normal distribution conditionally on $\bnu$ because the vector $\mathbf{a}$ is arbitrary.

Taking into account \eqref{mult_norm3}, \eqref{chi_prop} and the fact that $\xi$, $z_0$ and $\overline{\bx}$ are mutually independent we get 
{\footnotesize
\begin{eqnarray*}
\sqrt{n}\left.\left[
\left(
\begin{array}{c}
  \frac{\xi}{n} \\
   \overline{\mathbf{x}}^\top\mathbf{\Sigma}\overline{\mathbf{x}} \\
   \bl^\top\mathbf{\Sigma}\overline{\mathbf{x}}\\
  \frac{z_0}{\sqrt{n}}
\end{array}
\right)
-
\left(
\begin{array}{c}
  1 \\
   \bmu_{\bnu}^{\top}\mathbf{\Sigma}\bmu_{\bnu} + c\frac{\text{tr}(\bSigma^2)}{p}\\
   \bl^\top\mathbf{\Sigma}\bmu_{\bnu}\\
 0
\end{array}
\right)
\right] \right|\bnu
 \overset{\mathcal{D}}{\longrightarrow} \mathcal{N}\left(\mathbf{0}, \left(
  \begin{array}{cccc}
2 & 0 & 0 & 0\\
 0 &   2c\frac{\text{tr}(\bSigma^4)}{p}+4\bmu_{\bnu}^\top\mathbf{\Sigma}^3\bmu_{\bnu} &  2 \bl^\top\mathbf{\Sigma}^3\bmu_{\bnu} & 0\\
 0& 2 \bl^\top\mathbf{\Sigma}^3\bmu_{\bnu} & \bl^\top\bSigma^3\bl &0\\
0 &0 &0 & 1\,
  \end{array}
\right)  \right)\,.
\end{eqnarray*}
}
The application of the multivariate delta method leads to
\begin{eqnarray}\label{clt1}
  \sqrt{n}\sigma^{-1}_{\bnu}\left(\bl^\top\bS\overline{\bx} -  \bl^\top\bSigma\bmu_{\bnu}\right)\Big|\bnu\overset{\mathcal{D}}{\longrightarrow}\mathcal{N}\left(0,1\right)
\end{eqnarray}
where
\begin{eqnarray*}
  \sigma^2_{\bnu} = (\bl^\top\bSigma\bmu_{\bnu})^2 +\bl^\top \bSigma^3 \bl +\bl^\top\bSigma\bl\left[ \bmu_{\bnu}^\top\mathbf{\Sigma}\bmu_{\bnu} + c\frac{\text{tr}(\bSigma^2)}{p}\right]
\end{eqnarray*}
The asymptotic distribution does not depend on $\bnu$ and, thus, it is also the unconditional asymptotic distribution.
\end{proof}

Theorem \ref{th2} shows that properly normalized bilinear form $\bl^\top\bS\overline{\bx}$ itself can be accurately approximated by a mixture of normal distributions with both mean and variance depending on $\bnu$. Moreover, this central limit theorem delivers the following approximation for the distribution of $\bl^\top\bS\overline{\bx}$, namely for large $n$ and $p$ we have
\begin{eqnarray}
  {p^{-1}\bl^\top\bS\overline{\bx}}|\bnu \approx \mathcal{CN}\left(p^{-1}\bl^\top\bSigma\bmu_{\bnu}, \frac{p^{-2}\sigma^2_{\bnu}}{n}\right)\,,
\end{eqnarray}
i.e., it has a compound normal distribution with random mean and variance.

 The proof of Theorem \ref{th2} shows, in particular, that its key point is a stochastic representation of the product $\bl^\top\mathbf{S}\overline{\mathbf{x}}$ which can be presented using a $\chi^{2}$ distributed random variable, a standard normally distributed random variable, and a random vector which has a location mixture of normal distributions. Both assumptions (A1) and (A2) guarantee that the asymptotic mean $p^{-1}\bl^\top\bSigma\bmu_{\bnu}$ and the asymptotic variance $p^{-2}\sigma^2_{\bnu}$ are bounded with probability one and the covariance matrix $\bSigma$ is invertible as the dimension $p$ increases. Note that the case of standard asymptotics can be easily recovered from our result if we set $c\to0$.

Although Theorem 1 presents a result similar to the classical central limit theorem, it is not a proper CLT because of $\bnu$ which is random and, hence, it provides an asymptotic equivalence only. In the current settings it is not possible to provide a proper CLT without additional assumptions imposed on $\bnu$. The reason is the finite dimensionality of $\bnu$ denoted by $q$ which is fixed and independent of $p$ and $n$. This assures that the randomness in $\bnu$ will not vanish asymptotically. So, in order to justify a classical CLT we need to have an increasing value of $q=q(n)$ with the following additional assumptions:
\begin{itemize}
% \item[(A3)] Let $\mathbbm{E}(\bnu)=\bomega$ and $\mathbbm{Cov}(\bnu)=\bOmega$. For any vector $\bl$ it holds that
%       \begin{eqnarray}
%       \sqrt{q}\frac{\bl^\top\bnu - \bl^\top\bomega}{\sqrt{\bl^\top\bOmega\bl}}\overset{\mathcal{D}}{\longrightarrow}  \mathcal{N}(0, 1)\,.
%       \end{eqnarray}
\item[(A3)] Let $\mathbbm{E}(\bnu)=\bomega$ and $\mathbbm{Cov}(\bnu)=\bOmega$. For any vector $\bl$ it holds that
      \begin{eqnarray}
      \sqrt{q}\frac{\bl^\top\bnu - \bl^\top\bomega}{\sqrt{\bl^\top\bOmega\bl}}\overset{\mathcal{D}}{\longrightarrow}  \tilde{z}_0\,
      \end{eqnarray}
where the distribution of $\tilde{z}_0$ may in general depend on $\bl$.
\item[(A4)] There exists $M_2<\infty$ such that
  \[|\mathbf{u}_i^\top \bB \bomega| \le M_2 ~~ \text{for all}~~i=1,...,p~~\text{uniformly in $p$.}  \]
  Moreover, $\bB\bOmega\bB^\top$ has an uniformly bounded spectral norm.
\end{itemize}
Assumption $\mathbf{(A3)}$ assures that the vector of random coefficients $\bnu$ satisfies itself a sort of concentration property in the sense that any linear combination of its elements asymptotically concentrates around some random variable. In a special case of $\tilde{z}_0$ being standard normally distributed (A3) will imply a usual CLT for a linear combination of vector $\bnu$.
% Note that we can not refer to the asymptotic multivariate normality of $\bnu$ because its dimension is infinite (see, e.g., \cite{portnoy1986}).
As a result, assumption $\mathbf{(A3)}$ is a pretty general and natural condition on $\bnu$ taking into account that it is assumed $\bnu\sim\mathcal{N}_q(\bomega, \bOmega)$ (see, e.g., \cite{rao1965}) in many practical situations. Assumption $\mathbf{(A4)}$ is similar to $\mathbf{(A1)}$ and  $\mathbf{(A2)}$ and it ensures that $\bmu$ and $\bB\bomega$ as well as $\bSigma$ and $\bB\bOmega \bB^\top $ have the same behaviour as $p \to \infty$.

\begin{corollary}[CLT]\label{cor1}
   Under the assumptions of Theorem 1, assume $\mathbf{(A3)}$ and $\mathbf{(A4)}$. Let $\mathbf{l}^\top \bSigma \mathbf{l}=O(p)$ and $\bmu^\top \bSigma \bmu=O(p)$. Then it holds that 
  \begin{eqnarray*}
    \sqrt{n}\sigma^{-1}\left(\bl^\top\bS\bar\bx - \bl^\top\bSigma(\bmu+\bB\bomega)\right)\overset{\mathcal{D}}{\longrightarrow}\mathcal{N}(0, 1)
  \end{eqnarray*}
  for $p/n=c+o\left(n^{-1/2}\right)$ with $c\ge 0$ and $q/n\to \gamma$ with $\gamma> 0$ as $n\to\infty$ where
  \begin{eqnarray*}
    \sigma^2&=&%\gamma^{-1}\bl^\top\bSigma\bB\bOmega\bB^\top\bSigma\bl\\
             (\bl^\top\bSigma(\bB\bomega+\bmu))^2 +\bl^\top \bSigma^3 \bl
     +\bl^\top\bSigma\bl\left[ (\bmu+\bB\bomega)^\top\mathbf{\Sigma}(\bmu+\bB\bomega) + c\frac{\text{tr}(\bSigma^2)}{p}\right]\,.
  \end{eqnarray*}
\end{corollary}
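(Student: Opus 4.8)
The plan is to upgrade the conditional central limit theorem \eqref{clt1}, established inside the proof of Theorem \ref{th2}, to an unconditional one by (i) exchanging the random centering $\bl^\top\bSigma\bmu_{\bnu}$ for the deterministic $\bl^\top\bSigma(\bmu+\bB\bomega)$, (ii) exchanging the random normaliser $\sigma_{\bnu}$ for $\sigma$, and (iii) integrating out $\bnu$. Writing $\bmu_{\bnu}=(\bmu+\bB\bomega)+\bB(\bnu-\bomega)$ and setting $\mathbf{c}:=\bB^\top\bSigma\bl$, I would start from the identity
\begin{eqnarray*}
\sqrt{n}\,\sigma^{-1}\bigl(\bl^\top\bS\bar\bx-\bl^\top\bSigma(\bmu+\bB\bomega)\bigr)
=\frac{\sigma_{\bnu}}{\sigma}\cdot\sqrt{n}\,\sigma_{\bnu}^{-1}\bigl(\bl^\top\bS\bar\bx-\bl^\top\bSigma\bmu_{\bnu}\bigr)
+\frac{\sqrt{n}}{\sigma}\,\mathbf{c}^\top(\bnu-\bomega),
\end{eqnarray*}
so that the three tasks become: the second factor of the first summand converges conditionally on $\bnu$ to $\mathcal{N}(0,1)$ by \eqref{clt1}; the ratio $\sigma_{\bnu}/\sigma$ tends to one in probability; and the last summand is $o_P(1)$.

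For the correction term, Assumption $\mathbf{(A3)}$ applied to the fixed vector $\mathbf{c}$ shows that $\mathbf{c}^\top(\bnu-\bomega)$ is, after standardisation, of order $(\mathbf{c}^\top\bOmega\mathbf{c}/q)^{1/2}$, while $\mathbf{c}^\top\bOmega\mathbf{c}=\bl^\top\bSigma\bB\bOmega\bB^\top\bSigma\bl\le\|\bB\bOmega\bB^\top\|\,\bl^\top\bSigma^2\bl$ is $O(p)$ by $\mathbf{(A1)}$, $\mathbf{(A4)}$ and $\bl^\top\bSigma\bl=O(p)$. Combining $\mathbf{(A1)}$, $\mathbf{(A2)}$, $\mathbf{(A4)}$ with $\bl^\top\bSigma\bl=O(p)$ and $\bmu^\top\bSigma\bmu=O(p)$ one checks that $\sigma^2$ grows at the rate $p^2$; hence $\sqrt{n}\,\sigma^{-1}\mathbf{c}^\top(\bnu-\bomega)=O_P\!\bigl(\sqrt{n/(pq)}\bigr)=o_P(1)$ because $p/n\to c$ and $q/n\to\gamma>0$.

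The replacement $\sigma_{\bnu}\to\sigma$ is the delicate step. Expanding $\sigma_{\bnu}^2-\sigma^2$ via $\bmu_{\bnu}=(\bmu+\bB\bomega)+\bB(\bnu-\bomega)$ leaves linear functionals of $\bnu-\bomega$ — such as $\mathbf{c}^\top(\bnu-\bomega)$ and $(\bmu+\bB\bomega)^\top\bSigma\bB(\bnu-\bomega)$, multiplied by factors of bounded order — which are $o_P(\sigma^2)$ by the same use of $\mathbf{(A3)}$ as above, together with a single quadratic term $\bl^\top\bSigma\bl\cdot(\bnu-\bomega)^\top\bB^\top\bSigma\bB(\bnu-\bomega)$. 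I would handle the latter by writing $(\bnu-\bomega)^\top\bB^\top\bSigma\bB(\bnu-\bomega)=\sum_{i=1}^{p}\lambda_i\bigl((\bB^\top\mathbf{u}_i)^\top(\bnu-\bomega)\bigr)^2$, applying $\mathbf{(A3)}$ to each scalar $(\bB^\top\mathbf{u}_i)^\top(\bnu-\bomega)$ and using $\mathbf{u}_i^\top\bB\bOmega\bB^\top\mathbf{u}_i\le\|\bB\bOmega\bB^\top\|=O(1)$ from $\mathbf{(A4)}$; as there are $p\asymp q$ summands, this quadratic form is of smaller order than $\sigma^2/(\bl^\top\bSigma\bl)$, so that the whole term is $o_P(\sigma^2)$ and $\sigma_{\bnu}^2/\sigma^2=1+o_P(1)$.

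Finally, to remove the conditioning one observes that the limit in \eqref{clt1} is free of $\bnu$, so dominated convergence applied to conditional characteristic functions turns the conditional convergence into $\sqrt{n}\,\sigma_{\bnu}^{-1}(\bl^\top\bS\bar\bx-\bl^\top\bSigma\bmu_{\bnu})\overset{\mathcal{D}}{\longrightarrow}\mathcal{N}(0,1)$ unconditionally; an application of Slutsky's lemma then combines this with $\sigma_{\bnu}/\sigma\to1$ and with the negligible correction term to give the assertion. The main obstacle is precisely where $\mathbf{(A3)}$, a statement about \emph{linear} functionals of $\bnu$, does not suffice on its own: the quadratic form $(\bnu-\bomega)^\top\bB^\top\bSigma\bB(\bnu-\bomega)$ in $\sigma_{\bnu}^2$, and — a point that also needs attention — the verification that the Lindeberg bound used in the proof of Theorem \ref{th2} still holds along the $n$-dependent realisations $\bnu=\bnu_n$ when $q=q(n)\to\infty$ (one conditions on an event $\{\bnu_n\in G_n\}$ with probability tending to one on which the $\|\bnu_n\|$-type quantities are controlled, using $\mathbf{(A3)}$ and $\mathbf{(A4)}$). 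Both are resolved by reducing to sums of $p\asymp q$ uniformly controlled linear functionals of $\bnu-\bomega$.
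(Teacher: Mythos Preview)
Your proposal is correct and follows essentially the same route as the paper: the same additive decomposition into $(\sigma_{\bnu}/\sigma)\cdot[\text{conditional CLT term}]+\sqrt{n}\,\sigma^{-1}\mathbf{c}^\top(\bnu-\bomega)$, the same use of $\mathbf{(A3)}$ and the bound $\bl^\top\bSigma\bB\bOmega\bB^\top\bSigma\bl\le\|\bB\bOmega\bB^\top\|\,\bl^\top\bSigma^2\bl=O(p)$ for the correction term, and the same treatment of the quadratic form $(\bnu-\bomega)^\top\bB^\top\bSigma\bB(\bnu-\bomega)$ via the spectral expansion $\sum_i\lambda_i(\mathbf{u}_i^\top\bB(\bnu-\bomega))^2$ together with $\mathbf{(A4)}$. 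Your explicit flag that the Lindeberg verification in Theorem~\ref{th2} must be rechecked along the $n$-dependent $\bnu_n$ with $q(n)\to\infty$ is exactly what the paper does first (rebounding $(\mathbf{u}_i^\top\bmu_{\mathbf{a},\bnu})^2$ using $\mathbf{(A2)}$ and $\mathbf{(A4)}$), and your dominated-convergence/Slutsky wrap-up is a slightly more explicit version of the paper's ``the asymptotic distribution does not depend on $\bnu$'' argument.
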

\begin{proof}
Since 
\begin{eqnarray*}
(\bu_i^\top\bmu_{\mathbf{a},\bnu})^2&=&\left(\bu_i^\top\bmu+\bu_i^\top\bB\bomega+\bu_i^\top\bB(\bnu-\bomega)+\bu_i^\top\bl\frac{a_2}{a_1}\right)^2\nonumber\\
&\overset{(A2), (A4)}{\le}& M^2_2+ M^2_2+M_2^2 q(\bnu-\bomega)^{\top}(\bnu-\bomega)+  M^2_2\frac{a_2^2}{a^2_1}\\
&+&2M^2_2\left(1+2\frac{|a_2|}{|a_1|}\right)+2M^2_2\left(2+\frac{|a_2|}{|a_1|}\right)\sqrt{q(\bnu-\bomega)^{\top}(\bnu-\bomega)} < \infty
\end{eqnarray*}
with probability one, we get from the proof of Theorem \ref{th2} and assumption (A3) that
  \begin{eqnarray}\label{stochT1}
    \sqrt{n}\sigma^{-1}\left(\bl^\top\bS\bar\bx - \bl^\top\bSigma(\bmu+\bB\bomega)\right)
    =\frac{\sigma_{\bnu}}{\sigma}z_0+\frac{\sqrt{n}}{\sqrt{q}}\frac{\sqrt{\bl^\top\bSigma \bB\bOmega\bB^\top \bSigma \bl}}{\sigma} \tilde{z}_0+o_P(1)\,,
  \end{eqnarray}
where $z_0\sim\mathcal{N}(0, 1)$ and $\tilde{z}_0$ are independently distributed. 

Let $\lambda_{max}(\mathbf A)$ denotes the largest eigenvalue of a symmetric positive definite matrix $\mathbf A$. First, we note that for any vector $\mathbf{m}$ that satisfies $\mathbf{(A2)}$, we get
\begin{eqnarray*}
\mathbf{m}^\top\bSigma \bB\bOmega\bB^\top \bSigma \mathbf{m}&\le& \lambda_{max}(\bSigma)^2 \lambda_{max}(\mathbf{U}^T\bB\bOmega\bB^\top\mathbf{U}) \sum_{i=1}^{p} |\mathbf m^\top \bu_i| =O(p),
\end{eqnarray*}
where $\mathbf{U}$ is the eigenvector matrix of $\bSigma$ which is a unitary matrix and, consequently, (see, e.g., Chapter 8.5 in \citet{Lutkepohl1996}) $$\lambda_{max}(\mathbf{U}^T\bB\bOmega\bB^\top\mathbf{U})=\lambda_{max}(\bB\bOmega\bB^\top)=O(1).$$
As a result, we get $\bl^\top\bSigma \bB(\bnu-\bomega) =O_P(1)$ and $(\bmu+\bB\bomega)^\top\bSigma \bB(\bnu-\bomega) =O_P(1)$. 

Furthermore, it holds that
\begin{eqnarray*}
(\bnu-\bomega)^\top\bB^\top \bSigma \bB (\bnu-\bomega) &\le& \lambda_{max}(\bSigma) \sum_{i=1}^{p} (\mathbf{u}_i^\top \bB (\bnu-\bomega))^2
= O_P(1) \frac{1}{q}\sum_{i=1}^{p} \mathbf{u}_i^\top \bB\bOmega\bB^\top \mathbf{u}_i \\
&\le&  O_P(1) \lambda_{max}(\bB\bOmega\bB^\top) \frac{1}{q}\sum_{i=1}^{p} \mathbf{u}_i^\top \mathbf{u}_i= O_P(1).
\end{eqnarray*}

Hence, we have
\[ \frac{\sqrt{\bl^\top\bSigma \bB\bOmega\bB^\top \bSigma \bl}}{\sigma} \to 0\]
and, consequently, together with $\tilde{z}_0=O_P(1)$ and $q/n\to \gamma>0$ it ensures that the second summand in \eqref{stochT1} will disappear.
At last,
\[\frac{\sigma_{\bnu}^2}{\sigma^2} \overset{a.s.}{\to} 1\]
for $p/n=c+o\left(n^{-1/2}\right)$ with $c\ge 0$ and $q/n\to \gamma$ with $\gamma> 0$ as $n\to\infty$.
\end{proof}

Note that the asymptotic regime $q/n\to \gamma>0$ ensures that the dimension of the random coefficients is not increasing much faster than the sample size, i.e., $q$ and $n$ are comparable. It is worth mentioning that due to Corollary 1 the asymptotic distribution of the bilinear form $\bl^\top\bS\bar{\bx}$ does not depend on the covariance matrix of the random effects $\bOmega$. This knowledge may be further useful in constructing a proper pivotal statistic.

%%%%%%%%%%%%%%%%%%%%%%%%%%%%%%%%%%%%%%%%%%%%%%%%%%%%%%%%%%%%%%%%%%%%%%%%%%%%%%%%%%%%%%%%%%%%%%%%%%%%%%%%%%%%%%%%%%%%%%%%%%%%%%%%%%%%%%%%%%%%%%%%%%%%%%%%%%%%%
%--------------------------------------------------------------------------------------
% SubSection 3.2
%--------------------------------------------------------------------------------------
\subsection{CLT for the product of inverse sample covariance matrix and sample mean vector }

In this section we consider the distributional properties of the product of the inverse sample covariance matrix $\mathbf{S}^{-1}$ and the sample mean vector $\overline{\mathbf{x}}$. Again we prove that proper weighted bilinear forms involving $\bS^{-1}$ and $\overline{\bx}$ have asymptotically a normal distribution.
This result is summarized in Theorem \ref{th4}.
%--------------------------------------------------------------------------------------
% THEOREM 4
%--------------------------------------------------------------------------------------
\begin{theorem}
\label{th4}
Assume $\mathbf X\sim  \mathcal{LMN}_{p,n;q} (\bmu,\mathbf{\Sigma} , \mathbf B ;  f_{\bnu} )$, $p<n-1$, with $\bSigma$ positive definite and let $p/n=c +o(n^{-1/2})$, $c\in[0, 1)$ as $n\to\infty$. Let $\bl$ be a $p$-dimensional vector of constants that satisfies $\mathbf{(A2)}$. Then, under $\mathbf{(A1)}$ and $\mathbf{(A2)}$ it holds that
\begin{eqnarray} \label{th3eq1}
 \sqrt{n}\tilde{\sigma}^{-1}_{\bnu}\left(\bl^\top\mathbf{S}^{-1}\overline{\mathbf{x}}-\frac{1}{1-c}\bl^\top\mathbf {\Sigma}^{-1} \bmu_{\bnu} \right)\overset{\mathcal{D}}{\longrightarrow} \mathcal{N}(0,1)
\end{eqnarray}
where $\bmu_{\bnu} = \bmu+\bB\bnu$ and
\begin{eqnarray*}
\tilde{\sigma}^{2}_{\bnu}&=&\frac{1}{(1-c)^3}\left(\left(\bl^\top\mathbf{\Sigma}^{-1} \bmu_{\bnu}\right)^2 +\bl^\top\bSigma^{-1}\bl(1+\bmu_{\bnu}^\top\mathbf{\Sigma}^{-1} \bmu_{\bnu})\right).
\end{eqnarray*}
%with $\delta^2(\bnu)=\bmu_{\bnu}^\top \mathbf{R}_{\bl} \bmu_{\bnu}$,  $\mathbf{R}_{\bl}=\mathbf{\Sigma}^{-1}-\mathbf{\Sigma}^{-1}\mathbf{l}\mathbf{l}^\top\mathbf{\Sigma}^{-1}/\mathbf{l}^\top\mathbf{\Sigma}^{-1}\mathbf{l}$.
\end{theorem}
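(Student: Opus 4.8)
The plan is to imitate the two-stage strategy of the proof of Theorem~\ref{th2}: first extract an exact stochastic representation of $\bl^\top\bS^{-1}\overline{\bx}$ built from a few mutually independent elementary random variables, and then let $n\to\infty$ conditionally on $\bnu$. By Proposition~\ref{th1}, $\bS$ and $\overline{\bx}$ are independent, $(n-1)\bS\sim\mathcal{W}_p(n-1,\bSigma)$ (nonsingular since $p<n-1$), and $\overline{\bx}\sim\mathcal{LMN}_{p;q}(\bmu,\tfrac{1}{n}\bSigma,\bB;f_{\bnu})$. Fixing $\bnu$ and conditioning on $\overline{\bx}$, put $\mathbf{L}=(\bl,\overline{\bx})^\top$, which has rank $2$ with probability one. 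Completing $\mathbf{L}$ to an invertible $p\times p$ matrix and using the standard distributional property of bilinear forms in an inverse Wishart matrix (see \citet{Muirhead1982}, and \citet{BodnarOkhrin2011} for this precise product) one gets
\[
\mathbf{W}:=(n-1)\big(\mathbf{L}\bS^{-1}\mathbf{L}^\top\big)^{-1}\,\big|\,\overline{\bx}\ \sim\ \mathcal{W}_2\!\left(n-p+1,\ \big(\mathbf{L}\bSigma^{-1}\mathbf{L}^\top\big)^{-1}\right).
\]
Partitioning this $2\times2$ Wishart matrix (Theorems~3.2.8 and 3.2.10 of \citet{Muirhead1982}: the $\chi^2$ marginal of $W_{11}$, the conditional normality of $W_{12}$ given $W_{11}$, and the independent $\chi^2$ Schur complement $W_{22\cdot1}=W_{22}-W_{12}^2/W_{11}$) and using $\bl^\top\bS^{-1}\overline{\bx}=(n-1)(\mathbf{W}^{-1})_{12}=-(n-1)W_{12}/(W_{11}W_{22\cdot1})$, one obtains after elementary algebra
\begin{equation}\label{stoch-rep-inv}
\bl^\top\bS^{-1}\overline{\bx} \stackrel{d}{=} \frac{n-1}{\xi_2}\,\bl^\top\bSigma^{-1}\overline{\bx} + \frac{n-1}{\xi_2}\sqrt{\frac{\bl^\top\bSigma^{-1}\bl\;\overline{\bx}^\top\bSigma^{-1}\overline{\bx}-(\bl^\top\bSigma^{-1}\overline{\bx})^2}{\xi_1}}\;z_0 ,
\end{equation}
where $\xi_2\sim\chi^2_{n-p}$, $\xi_1\sim\chi^2_{n-p+1}$, $z_0\sim\mathcal{N}(0,1)$ and $\overline{\bx}\sim\mathcal{LMN}_{p;q}(\bmu,\tfrac{1}{n}\bSigma,\bB;f_{\bnu})$ are mutually independent; taking $\bl=\overline{\bx}$ in \eqref{stoch-rep-inv} reproduces the classical identity $\overline{\bx}^\top\bS^{-1}\overline{\bx}\stackrel{d}{=}(n-1)\overline{\bx}^\top\bSigma^{-1}\overline{\bx}/\xi_2$, a convenient consistency check.

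Next I would pass to the limit in \eqref{stoch-rep-inv} conditionally on $\bnu$, so that $\overline{\bx}\sim\mathcal{N}_p(\bmu_{\bnu},\tfrac{1}{n}\bSigma)$. The law of large numbers and the $\chi^2$ central limit theorem, together with $p/n=c+o(n^{-1/2})$, give $\tfrac{n-1}{\xi_2}\to\tfrac{1}{1-c}$, $\sqrt{n}\big(\tfrac{n-1}{\xi_2}-\tfrac{1}{1-c}\big)\overset{\mathcal{D}}{\longrightarrow}\mathcal{N}\big(0,\tfrac{2}{(1-c)^3}\big)$ and $\sqrt{n}\,\tfrac{n-1}{\xi_2\sqrt{\xi_1}}\to(1-c)^{-3/2}$ in probability. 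The linear form $\bl^\top\bSigma^{-1}\overline{\bx}$ is, for every $n$, exactly $\mathcal{N}\big(\bl^\top\bSigma^{-1}\bmu_{\bnu},\tfrac{1}{n}\bl^\top\bSigma^{-1}\bl\big)$ and is independent of $z_0$, so no Lindeberg step is needed here. For the discriminant, writing $\overline{\bx}=\bmu_{\bnu}+n^{-1/2}\bSigma^{1/2}\bz$ with $\bz\sim\mathcal{N}_p(\mathbf{0},\bI)$ and splitting $\bz$ along and orthogonal to $\bSigma^{-1/2}\bl$ gives $\bl^\top\bSigma^{-1}\bl\;\overline{\bx}^\top\bSigma^{-1}\overline{\bx}-(\bl^\top\bSigma^{-1}\overline{\bx})^2=\bl^\top\bSigma^{-1}\bl\big(\bmu_{\bnu}^\top\bSigma^{-1}\bmu_{\bnu}+\tfrac{1}{n}\chi^2_{p-1}\big)-(\bl^\top\bSigma^{-1}\bmu_{\bnu})^2$ up to terms of order $n^{-1/2}$; since $\tfrac{1}{n}\chi^2_{p-1}\to c$, the discriminant is asymptotically equivalent to $D_{\bnu}:=\bl^\top\bSigma^{-1}\bl(\bmu_{\bnu}^\top\bSigma^{-1}\bmu_{\bnu}+c)-(\bl^\top\bSigma^{-1}\bmu_{\bnu})^2$, its fluctuations being of smaller order. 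Exactly as in the proof of Theorem~\ref{th2}, Assumptions $\mathbf{(A1)}$ and $\mathbf{(A2)}$ keep $p^{-1}\bl^\top\bSigma^{-1}\bl$ and $p^{-1}\bmu_{\bnu}^\top\bSigma^{-1}\bmu_{\bnu}$ bounded, hence also control $\bl^\top\bSigma^{-1}\bmu_{\bnu}$ by Cauchy--Schwarz, which is all the control required.

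I would then combine these facts in \eqref{stoch-rep-inv} via Slutsky's theorem and the delta method, conditionally on $\bnu$. Since $\xi_2$ and $\overline{\bx}$ are independent, the delta method applied to the product $\tfrac{n-1}{\xi_2}\bl^\top\bSigma^{-1}\overline{\bx}$ shows that the first summand contributes $\tfrac{2(\bl^\top\bSigma^{-1}\bmu_{\bnu})^2}{(1-c)^3}+\tfrac{\bl^\top\bSigma^{-1}\bl}{(1-c)^2}$ to the asymptotic variance of $\sqrt{n}\big(\bl^\top\bS^{-1}\overline{\bx}-\tfrac{1}{1-c}\bl^\top\bSigma^{-1}\bmu_{\bnu}\big)$, while, since $z_0$ is independent of the remaining randomness, the second summand contributes $\tfrac{D_{\bnu}}{(1-c)^3}$, the two contributions being asymptotically independent. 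Adding them, the identity $\tfrac{\bl^\top\bSigma^{-1}\bl}{(1-c)^2}+\tfrac{c\,\bl^\top\bSigma^{-1}\bl}{(1-c)^3}=\tfrac{\bl^\top\bSigma^{-1}\bl}{(1-c)^3}$ merges the $(1-c)$ coming from the fluctuation of $(n-1)/\xi_2$ with the $c$ coming from the Mar\v{c}enko--Pastur contribution $\tfrac{1}{n}\chi^2_{p-1}\to c$ into the single factor $1$, producing exactly $\tilde{\sigma}^2_{\bnu}$. Dividing by $\tilde{\sigma}_{\bnu}$ yields, conditionally on $\bnu$, the limit $\mathcal{N}(0,1)$; since this limit does not depend on $\bnu$, it is also the unconditional limit, which establishes \eqref{th3eq1}.

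The main obstacle is the first step: deriving the representation \eqref{stoch-rep-inv} cleanly from the inverse-Wishart submatrix distribution together with the $2\times2$ matrix inversion (the computation parallels \citet{BodnarOkhrin2011}, but the degrees of freedom $n-p$ and $n-p+1$ must be tracked carefully). The second delicate point is the variance bookkeeping in the last step, in particular the cancellation $(1-c)+c=1$; one must also verify that the fluctuations of the discriminant and of $\overline{\bx}^\top\bSigma^{-1}\overline{\bx}$ are asymptotically negligible after normalization by $\tilde{\sigma}_{\bnu}$, which again rests on $\mathbf{(A1)}$ and $\mathbf{(A2)}$.
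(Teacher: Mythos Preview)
Your proposal is correct and follows the same overall architecture as the paper: obtain an exact stochastic representation of $\bl^\top\bS^{-1}\overline{\bx}$ in terms of two independent $\chi^2$ variables, a standard normal, and $\overline{\bx}$; pass to the limit conditionally on $\bnu$ via Slutsky and the delta method; and unconditon because the limit does not depend on~$\bnu$. Your representation \eqref{stoch-rep-inv} is in fact identical to the paper's intermediate representation once one expands the paper's $t_0$ as $z_0''/\sqrt{\zeta/(n-p+1)}$ and notes $\bl^\top\bSigma^{-1}\bl\,\overline{\bx}^\top\mathbf{R}_{\bl}\overline{\bx}=\bl^\top\bSigma^{-1}\bl\,\overline{\bx}^\top\bSigma^{-1}\overline{\bx}-(\bl^\top\bSigma^{-1}\overline{\bx})^2$. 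The differences are in execution: you reach \eqref{stoch-rep-inv} directly from the $\mathcal{W}_2\!\big(n-p+1,(\mathbf{L}\bSigma^{-1}\mathbf{L}^\top)^{-1}\big)$ law of $(n-1)(\mathbf{L}\bS^{-1}\mathbf{L}^\top)^{-1}$ and its Bartlett decomposition, whereas the paper assembles the same representation by chaining results of \citet{Muirhead1982}, \citet{BodnarOkhrin2008} and \citet{BodnarSchmid2008}; and in the limit step you only need a law of large numbers for the noncentral $\chi^2_{p-1}(n\delta^2(\bnu))$ governing the discriminant (its fluctuations being killed by the independent factor $z_0$ and the normalization $\tilde{\sigma}_{\bnu}$), while the paper repackages this piece as a noncentral $F$-ratio $\eta$ and invokes a joint CLT lemma from \citet{BodnarHautschParolya2016} before the delta method. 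Your route is a bit more self-contained; the paper's route produces the compact representation \eqref{stoch2} that is convenient for the simulation algorithm in Section~4.
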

%--------------------------------------------------------------------------------------
%PROOF OF THEOREM 4
%--------------------------------------------------------------------------------------
\begin{proof}
From Theorem 3.4.1 of \citet{GuptaNagar2000} and Proposition \ref{th1} we get
\begin{eqnarray*}
\mathbf S ^{-1}
\sim
 \mathcal{IW}_p \left(n+p,(n-1)\mathbf{\Sigma}^{-1}\right).
\end{eqnarray*}

It holds that
\begin{eqnarray*}
\bl^\top\mathbf{S}^{-1}\overline{\mathbf{x}}
=
(n-1)\overline{\mathbf{x}}^\top\mathbf{\Sigma}^{-1}\overline{\mathbf{x}} \frac{\bl^\top\mathbf{S}^{-1}\overline{\mathbf{x}}}{\overline{\mathbf{x}}^\top\mathbf{S}^{-1}\overline{\mathbf{x}}}
\frac{\overline{\mathbf{x}}^\top\mathbf{S}^{-1}\overline{\mathbf{x}}}{(n-1)\overline{\mathbf{x}}^\top\mathbf{\Sigma}^{-1}\overline{\mathbf{x}}}.
\end{eqnarray*}

Since $\mathbf{S}^{-1}$ and $\overline{\mathbf{x}}$ are independently distributed, we get from Theorem 3.2.12 of \citet{Muirhead1982} that
\begin{eqnarray}\label{chi-squared distr}
\widetilde{\xi}=(n-1)\frac{\overline{\mathbf{x}}^\top\mathbf{\Sigma}^{-1}\overline{\mathbf{x}}}{\overline{\mathbf{x}}^\top\mathbf{S}^{-1}\overline{\mathbf{x}}}
\sim
\chi^{2}_{{n-p}}
\end{eqnarray}
and it is independent of $\overline{\mathbf{x}}$. Moreover, the application of Theorem 3 in \citet{BodnarOkhrin2008} proves that $\overline{\mathbf{x}}^\top\mathbf{S}^{-1}\overline{\mathbf{x}}$ is independent of $\bl^\top\mathbf{S}^{-1}\overline{\mathbf{x}}/\overline{\mathbf{x}}^\top\mathbf{S}^{-1}\overline{\mathbf{x}}$ for given $\overline{\mathbf{x}}$.
As a result, it is also independent of $\overline{\mathbf{x}}^\top\mathbf{\Sigma}^{-1}\overline{\mathbf{x}}\cdot\bl^\top\mathbf{S}^{-1}\overline{\mathbf{x}}/\overline{\mathbf{x}}^\top\mathbf{S}^{-1}\overline{\mathbf{x}}$ for given $\overline{\mathbf{x}}$ and, consequently, 
\begin{eqnarray*}
(n-1)\overline{\mathbf{x}}^\top\mathbf{\Sigma}^{-1}\overline{\mathbf{x}} \frac{\bl^\top\mathbf{S}^{-1}\overline{\mathbf{x}}}{\overline{\mathbf{x}}^\top\mathbf{S}^{-1}\overline{\mathbf{x}}} ~~ \text{and} ~~
\frac{\overline{\mathbf{x}}^\top\mathbf{S}^{-1}\overline{\mathbf{x}}}{(n-1)\overline{\mathbf{x}}^\top\mathbf{\Sigma}^{-1}\overline{\mathbf{x}}}.
\end{eqnarray*}
are independent.

From the proof of Theorem 1 of \citet{BodnarSchmid2008} we obtain
\begin{eqnarray}
\label{t-distr}
(n-1)\overline{\mathbf{x}}^\top\mathbf{\Sigma}^{-1}\overline{\mathbf{x}}\frac{\mathbf{l}^\top\mathbf{S}^{-1}\overline{\mathbf{x}}}
{\overline{\mathbf{x}}^\top\mathbf{S}^{-1}\overline{\mathbf{x}}} \Bigg|\overline{\mathbf{x}}
\sim
t\left({n-p+1}; (n-1)\bl^\top\mathbf {\Sigma}^{-1} \overline{\mathbf x};(n-1)^2\frac{\overline{\mathbf{x}}^\top\mathbf{\Sigma}^{-1}\overline{\mathbf{x}}}{{n-p+1}}\bl^\top\mathbf{R}_{\overline{\mathbf{x}}}\bl\right),
\end{eqnarray}
where {$\mathbf{R}_{\mathbf{a}}=\mathbf{\Sigma}^{-1}-\mathbf{\Sigma}^{-1}\mathbf{a}\mathbf{a}^\top\mathbf{\Sigma}^{-1}/\mathbf{a}^\top\mathbf{\Sigma}^{-1}\mathbf{a}$, $\mathbf{a} \in I\!\!R^p$,} and the symbol $t(k,\mu,\tau^2)$ denotes a $t$-distribution with $k$ degrees of freedom, location parameter $\mu$ and scale parameter $\tau^2$.

Combining (\ref{chi-squared distr}) and (\ref{t-distr}), we get the stochastic representation of $\bl^\top\mathbf{S}^{-1}\overline{\mathbf x}$ given by
\begin{eqnarray*}
\bl^\top\mathbf{S}^{-1}\overline{\mathbf{x}}
&\stackrel{d}{=}&
\widetilde{\xi}^{-1} (n-1)
\left(\bl^\top\mathbf {\Sigma}^{-1} \overline {\mathbf x}
+t_{0}\sqrt{\frac{\overline{\mathbf{x}}^\top\mathbf{\Sigma}^{-1}\overline{\mathbf{x}}}{n-p+1}\cdot\bl^\top\mathbf{R}_{\overline{\mathbf{x}}}\bl}\right)\\
&=&\widetilde{\xi}^{-1} (n-1)\left(\bl^\top\mathbf {\Sigma}^{-1} \overline {\mathbf x}
+\frac{t_{0}}{\sqrt{n-p+1}}\sqrt{\bl^\top\mathbf{\Sigma}^{-1}\bl} \sqrt{\overline{\mathbf{x}}^\top\mathbf{R}_{\bl}\overline{\mathbf{x}}}\right),
\end{eqnarray*}
where (see, Proposition \ref{th1})
\[
\bl^\top\mathbf {\Sigma}^{-1} \overline{\mathbf{x}}\stackrel{d}{=} \bl^\top\mathbf {\Sigma}^{-1} ( \bmu+\mathbf{B}\bnu)+ \frac{\bl^\top\mathbf {\Sigma}^{-1} \bl}{\sqrt{n}}z'_0,\]
with
$\widetilde{\xi}\sim \chi^{2}_{n-p}$, $z'_0\sim \mathcal{N}(0,1)$, and $t_{0}\sim t(n-p+1,0, 1)$; $\widetilde{\xi}$, $\bnu$, $z'_0$, and $t_{0}$ are mutually independent.

Since $\mathbf{R}_{\bl} \bSigma \mathbf{R}_{\bl}=\mathbf{R}_{\bl}$, $tr(\mathbf{R}_{\bl} \bSigma)=p-1$, and $\mathbf{R}_{\bl} \bSigma \bSigma^{-1} \bl =\mathbf{0}$, the application of Corollary 5.1.3a and Theorem 5.5.1 in \citet{MathaiProvost1992} leads to
\begin{eqnarray}
n \overline{\mathbf{x}}^{\top}\mathbf{R}_{\bl}\overline{\mathbf{x}} |\bnu \sim \chi^2_{p-1}(n\delta^2(\bnu)) ~~ \text{with}~~ \delta^2( \bnu)= (\bmu+\mathbf{B}\bnu)^\top \mathbf{R}_{\bl} (\bmu+\mathbf{B}\bnu)\label{term2}\,
\end{eqnarray}
as well as $\bl^\top\mathbf {\Sigma}^{-1} \overline {\mathbf x}$ and $\overline{\mathbf{x}}^\top\mathbf{R}_{\bl}\overline{\mathbf{x}}$ are independent given $\bnu$. Finally, using the stochastic representation of a $t$-distributed random variable, we get
\begin{eqnarray}
  t_0&\stackrel{d}{=}&\frac{z''_0}{\sqrt{\frac{\zeta}{n-p+1}}}
\end{eqnarray}
with $\zeta\sim\chi^2_{n-p+1}$, $z''_0\sim\mathcal{N}(0, 1)$ and $z'_0\sim\mathcal{N}(0, 1)$ being mutually independent.
Together with \eqref{term2} it yields
\begin{eqnarray}\label{stoch2}
\bl^\top\mathbf{S}^{-1}\overline{\mathbf{x}}
  &\stackrel{d}{=}&{ \widetilde{\xi}^{-1} (n-1)\left(\bl^\top\mathbf {\Sigma}^{-1} (\bmu+\mathbf{B}\bnu) + \frac{z'_0}{\sqrt{n}}\sqrt{\bl^\top\bSigma^{-1}\bl}+ \frac{z''_{0}}{\sqrt{n}} \sqrt{\bl^\top\mathbf{\Sigma}^{-1}\bl} \sqrt{\frac{p-1}{n-p+1}\eta}\right)   }\nonumber\\
&=&\widetilde{\xi}^{-1} (n-1)\left(\bl^\top\mathbf {\Sigma}^{-1} (\bmu+\mathbf{B}\bnu)
+\sqrt{\bl^\top\mathbf{\Sigma}^{-1}\bl} \sqrt{1+\frac{p-1}{n-p+1}\eta}\frac{z_{0}}{\sqrt{n}}\right)\,,
\end{eqnarray}
where the last equality in \eqref{stoch2} follows from the fact that
  $$z'_0+z''_{0}\sqrt{\frac{p-1}{n-p+1}\eta}\stackrel{d}{=} z_0\sqrt{1+\frac{p-1}{n-p+1}\eta}$$ with $\eta=\frac{n\overline{\mathbf{x}}^\top\mathbf{R}_{\bl}\overline{\mathbf{x}}/(p-1)}{\zeta/(n-p+1)}|\bnu\sim F_{p-1,n-p+1}(n\delta^2(\bnu))$ (non-central $F$-distribution with $p-1$ and $n-p+1$ degrees of freedom and non-centrality parameter $n\delta^2(\bnu)$). Moreover, we have that $\widetilde{\xi}\sim \chi^{2}_{n-p}$ and $z_{0}\sim \mathcal{N}(0,1)$; $\widetilde{\xi}$, $z_0$, and $\eta$ are mutually independent.

From Lemma 6.4.(b) in \citet{BodnarHautschParolya2016} we get
\begin{equation*}
\sqrt{n}\left(
\left(
\begin{array}{c}
  {\widetilde{\xi}}/{(n-p)}  \\
  \eta \\
  {z_{0}}/{\sqrt{n}}
\end{array}
\right)
-\left(
\begin{array}{c}
  1 \\
  1+\delta^2(\bnu)/c \\
  0 \\
\end{array}
\right)\right) \Bigg|\bnu
\overset{\mathcal{D}}{\longrightarrow}
\mathcal{N}\left(\mathbf{0},
\left(
\begin{array}{ccc}
  2/(1-c) & 0 & 0\\
  0 &\sigma^2_{\eta} & 0 \\
  0 & 0 & 1 \\
  0 & 0 & 0
\end{array}
\right)
\right)
\end{equation*}
for $p/n=c+o(n^{-1/2})$, $c \in[0, 1)$ as $n\to\infty$ with
\[
\sigma^2_{\eta}=\dfrac{2}{c}\left(1+2\dfrac{\delta^2(\bnu)}{c}\right)+\dfrac{2}{1-c} \left(1+\dfrac{\delta^2(\bnu)}{c}\right)^2
\]

Consequently,
\begin{eqnarray*}
&&\sqrt{n}\left(
\left(
\begin{array}{c}
  {\widetilde{\xi}}/{(n-1)}  \\
  (p-1)\eta/(n-p+1) \\
  {z_{0}}/{\sqrt{n}}
\end{array}
\right)
-\left(
\begin{array}{c}
  (1-c) \\
  (c+\delta^2(\bnu))/(1-c) \\
  0
\end{array}
\right)\right)\Bigg|\bnu
\\
&&\overset{\mathcal{D}}{\longrightarrow}
\mathcal{N}\left(\mathbf{0},
\left(
\begin{array}{ccc}
  2(1-c) & 0 & 0 \\
  0 &c^2\sigma^2_{\eta}/(1-c)^2 & 0 \\
  0 & 0 & 1
 \end{array}
\right)
\right)
\end{eqnarray*}
for $p/n=c+o(n^{-1/2})$, $c \in[0, 1)$ as $n\to\infty$.

Finally, the application of the delta-method (c.f. \citet[Theorem 3.7]{dasGupta}) leads to
\[\sqrt{n}\left(\bl^\top\mathbf{S}^{-1}\overline{\mathbf{x}}-\frac{1}{1-c}\bl^\top\mathbf {\Sigma}^{-1} (\bmu+\mathbf{B}\bnu) \right)\Big|\bnu\overset{\mathcal{D}}{\longrightarrow} \mathcal{N}(0, \tilde{\sigma}^2_{\bnu})\]
for $p/n=c+o(n^{-1/2})$, $c \in[0, 1)$ as $n\to\infty$ with
 \begin{eqnarray*}
   \tilde{\sigma}^2_{\bnu} = \frac{1}{(1-c)^3}\left(2\left(\bl^\top\mathbf {\Sigma}^{-1} (\bmu+\mathbf{B}\bnu)\right)^2
  +\bl^\top\bSigma^{-1}\bl(1+\delta^2(\bnu))\right)\,.
\end{eqnarray*}
Consequently,
\begin{eqnarray*}
  \sqrt{n}\tilde{\sigma}^{-1}_{\bnu}\left(\bl^\top\mathbf{S}^{-1}\overline{\mathbf{x}}-\frac{1}{1-c}\bl^\top\mathbf {\Sigma}^{-1} (\bmu+\mathbf{B}\bnu) \right)\Big|\bnu\overset{\mathcal{D}}{\longrightarrow} \mathcal{N}(0,1)\,,
\end{eqnarray*}
where the asymptotic distribution does not depend on $\bnu$. Hence, it is also the unconditional asymptotic distribution.
\end{proof}

Again, Theorem \ref{th4} shows that the distribution of $\bl^\top\bS^{-1}\overline{\bx}$ can be approximated by a mixture of normal distributions. Indeed,
\begin{eqnarray}
  p^{-1}\bl^\top\bS^{-1}\overline{\bx}|\bnu \approx \mathcal{CN}\left(\frac{p^{-1}}{1-c}\bl^\top\bSigma^{-1}\bmu_{\bnu}, \frac{p^{-2}\tilde{\sigma}^2_{\bnu}}{n}\right)\,.
\end{eqnarray}
In the proof of Theorem \ref{th4} we can read out that the stochastic representation for the product of the inverse sample covariance matrix and the sample mean
vector is presented by using a $\chi^{2}$ distributed random variable, a general skew normally distributed random vector and a standard $t$-distributed random variable. This result is itself very useful and allows to generate the values of $\bl^\top\bS^{-1}\overline{\bx}$ by just simulating three random variables from the standard univariate distributions and a random vector $\bnu$ which determines the family of the matrix-variate location mixture of normal distributions. The assumptions about the boundedness of the quadratic and bilinear forms involving $\bSigma^{-1}$ plays here the same role as in Theorem \ref{th2}. Note that in this case we need no assumption either on the Frobenius norm of the covariance matrix or its inverse.

Finally, in Corollary \ref{cor2} we formulate the CLT for the product of the inverse sample covariance matrix and the sample mean vector. 
\begin{corollary}[CLT]\label{cor2}
   Under the assumptions of Theorem \ref{th4}, assume $\mathbf{(A3)}$ and $\mathbf{(A4)}$. Let $\mathbf{l}^\top \bSigma^{-1} \mathbf{l}=O(p)$ and $\bmu^\top \bSigma^{-1} \bmu=O(p)$.  Then it holds that
\begin{eqnarray*}
 \sqrt{n}\tilde{\sigma}^{-1}\left(\bl^\top\mathbf{S}^{-1}\overline{\mathbf{x}}-\frac{1}{1-c}\bl^\top\mathbf {\Sigma}^{-1} (\bmu+\bB\bomega) \right)\overset{\mathcal{D}}{\longrightarrow} \mathcal{N}(0,1)
\end{eqnarray*}
  for $p/n=c+o\left(n^{-1/2}\right)$ with $c\in[0, 1)$ and $q/n\to \gamma$ with $\gamma> 0$ as $n\to\infty$ where
  \begin{eqnarray*}
    \tilde{\sigma}^2&=&%(1-c)^{-2}\gamma^{-1}\bl^\top\bSigma^{-1}\bB\bOmega\bB^\top\bSigma^{-1}\bl\\
     \frac{1}{(1-c)^3}\left(\left(\bl^\top\mathbf{\Sigma}^{-1}(\bmu+\bB\bomega)\right)^2 +\bl^\top\bSigma^{-1}\bl(1+(\bmu+\bB\bomega)^\top\mathbf{\Sigma}^{-1} (\bmu+\bB\bomega))\right)\,.
  \end{eqnarray*}
\end{corollary}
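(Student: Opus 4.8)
The plan is to derive Corollary \ref{cor2} from the stochastic representation \eqref{stoch2} of $\bl^\top\mathbf{S}^{-1}\overline{\mathbf{x}}$ established in the proof of Theorem \ref{th4}, in exactly the same spirit as Corollary \ref{cor1} was derived from Theorem \ref{th2}. First I would observe that the only place the conditioning vector $\bnu$ enters the representation \eqref{stoch2} is through $\bmu_{\bnu}=\bmu+\bB\bnu$ appearing (i) linearly in the location term $\bl^\top\bSigma^{-1}(\bmu+\bB\bnu)$ and (ii) quadratically through $\delta^2(\bnu)=(\bmu+\bB\bnu)^\top\mathbf{R}_{\bl}(\bmu+\bB\bnu)$ inside the variance $\tilde\sigma^2_{\bnu}$. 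Writing $\bmu+\bB\bnu=(\bmu+\bB\bomega)+\bB(\bnu-\bomega)$ and invoking assumption $\mathbf{(A3)}$, the term $\sqrt{q}\,\bl^\top\bSigma^{-1}\bB(\bnu-\bomega)/\sqrt{\bl^\top\bSigma^{-1}\bB\bOmega\bB^\top\bSigma^{-1}\bl}$ converges in distribution to some $\tilde z_0$, so that
\begin{eqnarray*}
  \sqrt{n}\,\tilde\sigma^{-1}\!\left(\bl^\top\mathbf{S}^{-1}\overline{\mathbf{x}}-\tfrac{1}{1-c}\bl^\top\bSigma^{-1}(\bmu+\bB\bomega)\right)
  = \frac{\tilde\sigma_{\bnu}}{\tilde\sigma}z_0 + \frac{\sqrt{n}}{\sqrt{q}}\frac{\sqrt{\bl^\top\bSigma^{-1}\bB\bOmega\bB^\top\bSigma^{-1}\bl}}{(1-c)\tilde\sigma}\,\tilde z_0 + o_P(1),
\end{eqnarray*}
with $z_0\sim\mathcal N(0,1)$ and $\tilde z_0$ independent, paralleling \eqref{stochT1}.

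Next I would control the cross terms and the random fluctuations of $\delta^2(\bnu)$. Using $\mathbf{R}_{\bl}\preceq\bSigma^{-1}$ and the bound $\mathbf{m}^\top\bSigma^{-1}\bB\bOmega\bB^\top\bSigma^{-1}\mathbf{m}\le\lambda_{max}(\bSigma^{-1})^2\lambda_{max}(\bB\bOmega\bB^\top)\sum_{i=1}^p|\mathbf{m}^\top\bu_i|=O(p)$, valid for any $\mathbf{m}$ satisfying $\mathbf{(A2)}$ by $\mathbf{(A1)}$ and $\mathbf{(A4)}$, one gets $\bl^\top\bSigma^{-1}\bB(\bnu-\bomega)=O_P(1)$ and similarly $(\bmu+\bB\bomega)^\top\bSigma^{-1}\bB(\bnu-\bomega)=O_P(1)$, while $(\bnu-\bomega)^\top\bB^\top\bSigma^{-1}\bB(\bnu-\bomega)=O_P(1)$ by the same averaging argument as in the proof of Corollary \ref{cor1}. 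Consequently $\delta^2(\bnu)=(\bmu+\bB\bomega)^\top\mathbf{R}_{\bl}(\bmu+\bB\bomega)+o_P(p)$ up to lower-order fluctuations, and since $\tilde\sigma^2$ involves $(\bmu+\bB\bomega)^\top\bSigma^{-1}(\bmu+\bB\bomega)=O(p)$ by hypothesis while $(\bmu+\bB\bomega)^\top\bSigma^{-1}(\bmu+\bB\bomega)-\delta^2(\bnu)=(\bl^\top\bSigma^{-1}(\bmu+\bB\bomega))^2/\bl^\top\bSigma^{-1}\bl$ is also $O(p)$, one concludes $\tilde\sigma_{\bnu}^2/\tilde\sigma^2\overset{a.s.}{\to}1$. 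Combined with $q/n\to\gamma>0$ and $\sqrt{\bl^\top\bSigma^{-1}\bB\bOmega\bB^\top\bSigma^{-1}\bl}/\tilde\sigma\to0$ (because the numerator is $O(\sqrt{p})$ and $\tilde\sigma^2$ is of exact order $p$), the middle summand vanishes, leaving $z_0\sim\mathcal N(0,1)$ as the limit.

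The main obstacle, as in Corollary \ref{cor1}, is checking that the Lindeberg-type decomposition underlying \eqref{stoch2} remains valid once $\bnu$ is allowed to grow with $n$: specifically, one must re-examine the noncentrality parameter $n\delta^2(\bnu)$ and the conditional asymptotics from Lemma 6.4(b) of \citet{BodnarHautschParolya2016} to ensure that the convergence there holds with $\delta^2(\bnu)$ now random and $O_P(1)$-perturbed, uniformly enough that the delta-method step still applies. Concretely, I would verify that $\delta^2(\bnu)$ stays bounded in probability — which follows from $(\bu_i^\top\bmu_{\mathbf{a},\bnu})^2<\infty$ with probability one, established exactly as in the displayed inequality at the start of the proof of Corollary \ref{cor1} using $\mathbf{(A2)}$ and $\mathbf{(A4)}$ — and that the $F$-distributed variable $\eta$ concentrates around $1+\delta^2(\bnu)/c$ conditionally on $\bnu$, so the whole chain of convergences in the proof of Theorem \ref{th4} goes through verbatim with $\bnu$ replaced by its realization. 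Once boundedness is in hand, the remaining computation of $\tilde\sigma^2$ is the routine substitution $\delta^2(\bnu)\rightsquigarrow(\bmu+\bB\bomega)^\top\mathbf{R}_{\bl}(\bmu+\bB\bomega)$ followed by the algebraic identity $\bl^\top\bSigma^{-1}\bl\,(\bmu+\bB\bomega)^\top\mathbf{R}_{\bl}(\bmu+\bB\bomega)+(\bl^\top\bSigma^{-1}(\bmu+\bB\bomega))^2=\bl^\top\bSigma^{-1}\bl\,(\bmu+\bB\bomega)^\top\bSigma^{-1}(\bmu+\bB\bomega)$, which recovers the stated form of $\tilde\sigma^2$.
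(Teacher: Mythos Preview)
Your approach is essentially identical to the paper's: you write the same two–term decomposition
\[
\frac{\tilde\sigma_{\bnu}}{\tilde\sigma}\,z_0\;+\;\frac{1}{1-c}\,\frac{\sqrt{n}}{\sqrt{q}}\,\frac{\sqrt{\bl^\top\bSigma^{-1}\bB\bOmega\bB^\top\bSigma^{-1}\bl}}{\tilde\sigma}\,\tilde z_0\;+\;o_P(1),
\]
then use $\mathbf{(A4)}$ exactly as in Corollary~\ref{cor1} to kill the second summand and to obtain $\tilde\sigma_{\bnu}^2/\tilde\sigma^2\overset{a.s.}{\to}1$; the algebraic identity you spell out for $\mathbf{R}_{\bl}$ is precisely what reconciles the $\delta^2(\bnu)$--form of $\tilde\sigma^2_{\bnu}$ in the proof of Theorem~\ref{th4} with the $\bSigma^{-1}$--form in the statement.

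One slip to fix: you write that ``$\tilde\sigma^2$ is of exact order $p$'', but in fact $\tilde\sigma^2$ contains the product $\bl^\top\bSigma^{-1}\bl\cdot(\bmu+\bB\bomega)^\top\bSigma^{-1}(\bmu+\bB\bomega)$, which under the stated hypotheses is of order $p^2$; with the claimed order $p$ the ratio $O(\sqrt{p})/\tilde\sigma$ would not vanish. Replacing ``order $p$'' by ``order $p^2$'' (so $\tilde\sigma\asymp p$) gives the needed $O(\sqrt{p})/O(p)\to 0$, and the rest of your argument goes through.
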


\begin{proof}
From the proof of Theorem \ref{th4} and assumption $\mathbf{(A3)}$ it holds that 
  \begin{eqnarray*}
     \sqrt{n}\tilde{\sigma}^{-1}\left(\bl^\top\mathbf{S}^{-1}\overline{\mathbf{x}}-\frac{1}{1-c}\bl^\top\mathbf {\Sigma}^{-1} (\bmu+\bB\bomega) \right)
    =\frac{\tilde{\sigma}_{\bnu}}{\tilde{\sigma}}z_0+\frac{1}{1-c}\frac{\sqrt{n}}{\sqrt{q}}\frac{\sqrt{\bl^\top\bSigma^{-1} \bB\bOmega\bB^\top \bSigma^{-1} \bl}}{\tilde{\sigma}} \tilde{z}_0+o_P(1)\,,
  \end{eqnarray*}
where $z_0\sim\mathcal{N}(0, 1)$ and $\tilde{z}_0$ are independently distributed.

Finally, in a similar way like in Corollary 1 we get from $\mathbf{(A4)}$ that
\begin{eqnarray}
  (1-c)^{-2}\gamma^{-1}\frac{\bl^\top\bSigma^{-1}\bB\bOmega\bB^\top\bSigma^{-1}\bl}{\tilde{\sigma}^2}&\to& 0,\\
  \frac{\tilde{\sigma}_{\bnu}^2}{\tilde{\sigma}^2} &\overset{a.s.}{\to}& 1
\end{eqnarray}
for $p/n=c+o\left(n^{-1/2}\right)$ with $c\in[0,1)$ and $q/n\to \gamma$ with $\gamma> 0$ as $n\to\infty$.
\end{proof}

%%%%%%%%%%%%%%%%%%%%%%%%%%
\section{Numerical study}

In this section we provide a Monte Carlo simulation study to investigate the performance of
the suggested CLTs for the products of the (inverse) sample covariance matrix and the sample mean vector.

In our simulations we put $\mathbf l = \mathbf 1_p$, each element of the vector $\bmu$ is uniformly distributed on $[-1,1]$ while each element of the matrix $\mathbf B$ is uniformly distributed on $[0,1]$.
Also, we take $\mathbf \Sigma$ as a diagonal matrix where each diagonal element is uniformly distributed on $[0,1]$. It can be checked that in such a setting the assumptions $\mathbf{(A1)}$ and $\mathbf{(A2)}$ are satisfied. Indeed, the population covariance matrix satisfies the condition $\mathbf{(A1)}$ because the probability of getting exactly zero eigenvalue equals to zero. On the other hand, the condition $\mathbf{(A2)}$ is obviously valid too because the $i$th eigenvector of $\bSigma$ is $\bu_i=\mathbf{e}_i=(0,\ldots,\underset{\text{ith place}}{1},0,\ldots,0)^\prime$.

In order to define the distribution for the random vector $\bnu$, we consider two special cases.
In the first case we take $\bnu = |\bpsi|$, where $\bpsi \sim \mathcal {N}_q ( \mathbf 0, \mathbf I_q)$, i.e., $\bnu$ has a $q$-variate truncated normal distribution.
In the second case we put $\bnu \sim \mathcal{GAL}_q (\mathbf I_q, \mathbf 1_q, 10)$, i.e., $\bnu$ has a $q$-variate generalized asymmetric Laplace distribution (c.f., \citet{KPR2013}).
Also, we put $q=10$.

We compare the results for several values of $c \in \{0.1, 0.5, 0.8, 0.95\}$.
The simulated data consist of $N=10^5$ independent realizations which are used to fit the corresponding kernel density estimators with Epanechnikov kernel.
The bandwith parameters are determined via cross-validation for every sample.
The asymptotic distributions are simulated using the results of Theorems \ref{th2} and \ref{th4}.
The corresponding algorithm is given next:

\begin{itemize}
\item[a)]
generate $\bnu = |\bpsi|$, where $\bpsi \sim \mathcal {N}_q ( \mathbf 0_q, \mathbf I_q)$, or generate $\bnu \sim \mathcal {GAL} _q (\mathbf I_q, \mathbf 1_q, 10)$;

\item[b)] generate $\mathbf l ^\top \mathbf S \overline {\mathbf x}$ by using the stochastic representation \eqref{stoch1} obtained in the proof of Theorem \ref{th2}, namely
\begin{eqnarray*}
\bl^\top\mathbf{S}\overline{\mathbf{x}}
\stackrel{d}{=}
{\dfrac{\xi}{n-1}}\bl^\top\mathbf{\Sigma}(\mathbf{y}+\mathbf B\bnu)
+
\dfrac{\sqrt{\xi}}{n-1}
((\mathbf{y}+\mathbf B\bnu)^\top\mathbf{\Sigma}(\mathbf{y}+\mathbf B\bnu)\bl^\top\mathbf{\Sigma}\bl-(\bl^\top\mathbf{\Sigma}(\mathbf{y}+\mathbf B\bnu))^2)^{1/2}z_{0},
\end{eqnarray*}
where $\xi \sim  \chi^{2}_{{n-1}}$, $ z_0\sim \mathcal{N} (0, 1)$, $\mathbf{y}\sim \mathcal{N}_p(\bmu,\frac{1}{n}\mathbf{\Sigma})$; $\xi$, $z_{0}$, $\mathbf{y}$, and $\bnu$ are mutually independent

\item[b')] generate $\mathbf l ^\top \mathbf S^{-1} \overline {\mathbf x}$ by using the stochastic representation \eqref{stoch2} obtained in the proof of Theorem \ref{th4}, namely
\begin{eqnarray*}
\bl^\top\mathbf{S}^{-1}\overline{\mathbf{x}}
&\stackrel{d}{=}&
\widetilde{\xi}^{-1} (n-1)\left(\bl^\top\mathbf {\Sigma}^{-1} (\bmu+\mathbf{B}\bnu)
+\sqrt{\bl^\top\mathbf{\Sigma}^{-1}\bl} \sqrt{1+\frac{p-1}{n-p+1}\eta}\frac{z_{0}}{\sqrt{n}}\right),
\end{eqnarray*}
where $\widetilde{\xi}\sim \chi^{2}_{n-p}$, $z_{0}\sim \mathcal{N}(0,1)$, and $\eta\sim F_{p-1,n-p+1}(n\delta^2(\bnu))$ with $\delta^2(\bnu)=(\bmu+\mathbf{B}\bnu)^\top \mathbf{R}_{\bl} (\bmu+\mathbf{B}\bnu)$, $\mathbf{R}_{\mathbf{l}}=\mathbf{\Sigma}^{-1}-\mathbf{\Sigma}^{-1}\mathbf{l}\mathbf{l}^\top\mathbf{\Sigma}^{-1}/\mathbf{l}^\top\mathbf{\Sigma}^{-1}\mathbf{l}$; $\widetilde{\xi}$, $z_0$ and $(\eta,\bnu)$ are mutually independent.

%generate independently $(n-1) \mathbf S \sim \mathcal W _p (n-1 , \bSigma)$ and $\overline {\mathbf x} \sim \mathcal{SN}_{p,q} \left( \bmu, \frac{1}{n} \bSigma, \mathbf B; f_{\bnu} \right)$;

\item[c)]
compute
$$
\sqrt{n} \sigma^{-1}_{\bnu}  \left( \mathbf l ^\top \mathbf S \overline {\mathbf x} - \mathbf l^\top \bSigma \bmu_{\bnu} \right)
$$
and
$$
\sqrt{n} \tilde{\sigma}^{-1}_{\bnu}  \left( \mathbf l ^\top \mathbf S^{-1} \overline {\mathbf x} - \frac{1}{1-c}\mathbf l^\top \bSigma^{-1} \bmu_{\bnu} \right)
$$
where
\begin{eqnarray*}
\bmu_{\bnu} &=& \bmu + \mathbf B \bnu
\\
\sigma^2 _{\bnu} &=& \left[\bmu^\top_{\bnu}\bSigma\bmu_{\bnu}+c||\bSigma||^2_F\right]\bl^\top\bSigma\bl+(\bl^\top\bSigma\bmu_{\bnu})^2 +\bl^\top \bSigma^3 \bl
\\
\tilde{\sigma}^{2}_{\bnu}&=&\frac{1}{(1-c)^3}\left(2\left(\bl^\top\mathbf{\Sigma}^{-1} \bmu_{\bnu}\right)^2
  +\bl^\top\bSigma^{-1}\bl(1+\delta^2(\bnu))\right)
\end{eqnarray*}
with $\delta^2(\bnu)=\bmu_{\bnu}^\top \mathbf{R}_{\bl} \bmu_{\bnu}$, $\mathbf{R}_{\bl}=\mathbf{\Sigma}^{-1}-\mathbf{\Sigma}^{-1}\mathbf{l}\mathbf{l}^\top\mathbf{\Sigma}^{-1}/\mathbf{l}^\top\mathbf{\Sigma}^{-1}\mathbf{l}$.
\item[d)]
repeat a)-c) $N$ times.
\end{itemize}

It is remarkable that for generating $\mathbf l ^\top \mathbf S \overline {\mathbf x}$ and $\mathbf l ^\top \mathbf S^{-1} \overline {\mathbf x}$ only random variables from the standard distributions are need. Neither the data matrix $\mathbf{X}$ nor the sample covariance matrix $\mathbf{S}$ are used.

$$
[\ Figures\ \  \ref{fig1}-\ref{fig8}\ ]
$$

In Figures \ref{fig1}-\ref{fig4} we present the results of simulations for the asymptotic distribution that is given in Theorem \ref{th2} while the asymptotic distribution as given in Theorem \ref{th4} is presented in Figures \ref{fig5}-\ref{fig8} for different values of $c=\{0.1,0.5,0.8,0.95\}$.
The suggested asymptotic distributions are shown as a dashed black line, while the standard normal distribution is a solid black line.
All results demonstrate a good performance of both asymptotic distributions for all considered values of $c$. Even in the extreme case $c=0.95$ our asymptotic results seem to produce a quite reasonable approximation.
Moreover, we observe a good robustness of our theoretical results for different distributions of $\bnu$.
Also, we observe that all asymptotic distributions are slightly skewed to the right for the finite dimensions. This effect is even more significant in the case of  the  generalized asymmetric Laplace distribution.
 Nevertheless, the skewness disappears with growing dimension and sample size, i.e., the distribution becomes symmetric one and converges to its asymptotic counterpart.

\section{Summary}

In this paper we introduce the family of the matrix-variate location mixture of normal distributions that generalizes a large number of
the existing skew normal models.
Under the MVLMN we derive the distributions of the sample mean vector and the sample covariance matrix.
Moreover, we show that they are independently distributed.
Furthermore, we derive the CLTs under the high-dimensional asymptotic regime for the products of the (inverse) sample covariance matrix and the sample mean vector.
In the numerical study, the good finite sample performance of both asymptotic distributions is documented.

\subsection*{Acknowledgement}
The authors are thankful to Professor Niels Richard Hansen, the Associate Editor, and two anonymous Reviewers for careful reading of the manuscript and for their suggestions which have improved an earlier version of this paper.

\section{Appendix}

\noindent {\bf Proof of Proposition \ref{kapec}.}
\begin{proof} Straightforward but tedious calculations give
{\small
\begin{eqnarray*}
f_{\mathbf{X}}(\mathbf{Z})
&=&
C^{-1}\int_{\mathbb{R}^{q}_{+}}
f_{N_{p, n}(\bmu\mathbf{1}_{n}^\top,\mathbf{\Sigma}\otimes\mathbf{I}_{n})}(\mathbf{Z}-\mathbf{B}\bnu^*\mathbf{1}_{n}^\top)
f_{N_{q}(\mathbf{0},\mathbf{\Omega})}(\bnu^*)
\mathbf{d\bnu}^*
\\
&=&
C^{-1} \frac{(2\pi)^{-(np+q)/2}}{|\mathbf{\Omega}|^{1/2}|\mathbf{\Sigma}|^{n/2}}
\int_{\mathbb{R}^{q}_{+}}
\exp\left\{-\frac{1}{2}\bnu^{*\top}\mathbf{\Omega}^{-1}\bnu^*\right\}
\\
&\times&
\exp\left\{-\frac{1}{2}\mathbf{vec}\left(\mathbf{Z}-\bmu\mathbf{1}_{n}^\top-\mathbf{B}\bnu^*\mathbf{1}_n^\top\right)^\top(\mathbf{I}_n\otimes\mathbf{\Sigma})^{-1}\mathbf{vec}\left(\mathbf{Z}-\bmu\mathbf{1}_{n}^\top-\mathbf{B}\bnu^*\mathbf{1}_n^\top\right)\right\}
\mathbf{d}\bnu^*
\\
&=&
C^{-1}\frac{(2\pi)^{-(np+q)/2}}{|\mathbf{\Omega}|^{1/2}|\mathbf{\Sigma}|^{n/2}}\exp\left\{-\frac{1}{2}\mathbf{vec}(\mathbf{Z}-\bmu\mathbf{1}_n^\top)^\top
(\mathbf{I}_n\otimes\mathbf{\Sigma})^{-1}\mathbf{vec}(\mathbf{Z}-\bmu\mathbf{1}_n^\top)\right\}
\\
&\times&
\exp\left\{\frac{1}{2}\mathbf{vec}(\mathbf{Z}-\bmu\mathbf{1}_n^\top)^\top \mathbf{E}^\top
\mathbf{D}\mathbf{Evec}(\mathbf{Z}-\bmu\mathbf{1}_n^\top)\right\}
\\
&\times&
\int_{\mathbb{R}^{q}_{+}}\exp\left\{-\frac{1}{2}\left[\left(\bnu^*-\mathbf{D}\mathbf{Evec}(\mathbf{Z}-\bmu\mathbf{1}_n^\top)\right)^\top
\mathbf{D}^{-1}\left(\bnu^*-\mathbf{D}\mathbf{Evec}(\mathbf{Z}-\bmu\mathbf{1}_n^\top)\right)\right]\right\}\mathbf{d}\bnu^*
\\
&=&
C^{-1}\frac{|\mathbf{F}|^{1/2}|\mathbf{D}|^{1/2}}{|\mathbf{\Omega}|^{1/2}|\mathbf{\Sigma}|^{n/2}}
\Phi_{q}\left(\mathbf{0};-\mathbf{D}\mathbf{E}\mathbf{vec}(\mathbf{Z}-\bmu\mathbf{1}_n^\top),\mathbf{D}\right)
\phi_{pn}\left(\mathbf{vec}(\mathbf{Z}-\bmu\mathbf{1}_n^\top);\mathbf{0},\mathbf{F}\right)
\\
&=&
\widetilde{C}^{-1}\Phi_{q}\left(\mathbf{0};-\mathbf{D}\mathbf{E}\mathbf{vec}(\mathbf{Z}-\bmu\mathbf{1}_n^\top),\mathbf{D}\right)
\phi_{pn}\left(\mathbf{vec}(\mathbf{Z}-\bmu\mathbf{1}_n^\top);\mathbf{0},\mathbf{F}\right)
\end{eqnarray*}
where
$\mathbf{D} = (n\mathbf{B}^\top\mathbf{\Sigma}^{-1}\mathbf{B}+\mathbf{\Omega}^{-1})^{-1}$,
$\mathbf{E}=\mathbf{1}_n^\top\otimes\mathbf{B}^\top\mathbf{\Sigma}^{-1}$, $\mathbf{F}=(\mathbf{I}_n\otimes\mathbf{\Sigma}^{-1}-\mathbf{E}^\top\mathbf{D}\mathbf{E})^{-1}$, and
$$\widetilde{C}^{-1}=C^{-1}\frac{|\mathbf{F}|^{1/2}|\mathbf{D}|^{1/2}}{|\mathbf{\Omega}|^{1/2}|\mathbf{\Sigma}|^{n/2}}.$$
}
\end{proof}
\bibliographystyle{apalike}
\bibliography{SkewNormal}

%----------------------------------------------------------------------------------------------------
% Figure 1
%----------------------------------------------------------------------------------------------------

\begin{figure}
  \includegraphics[width=8cm]{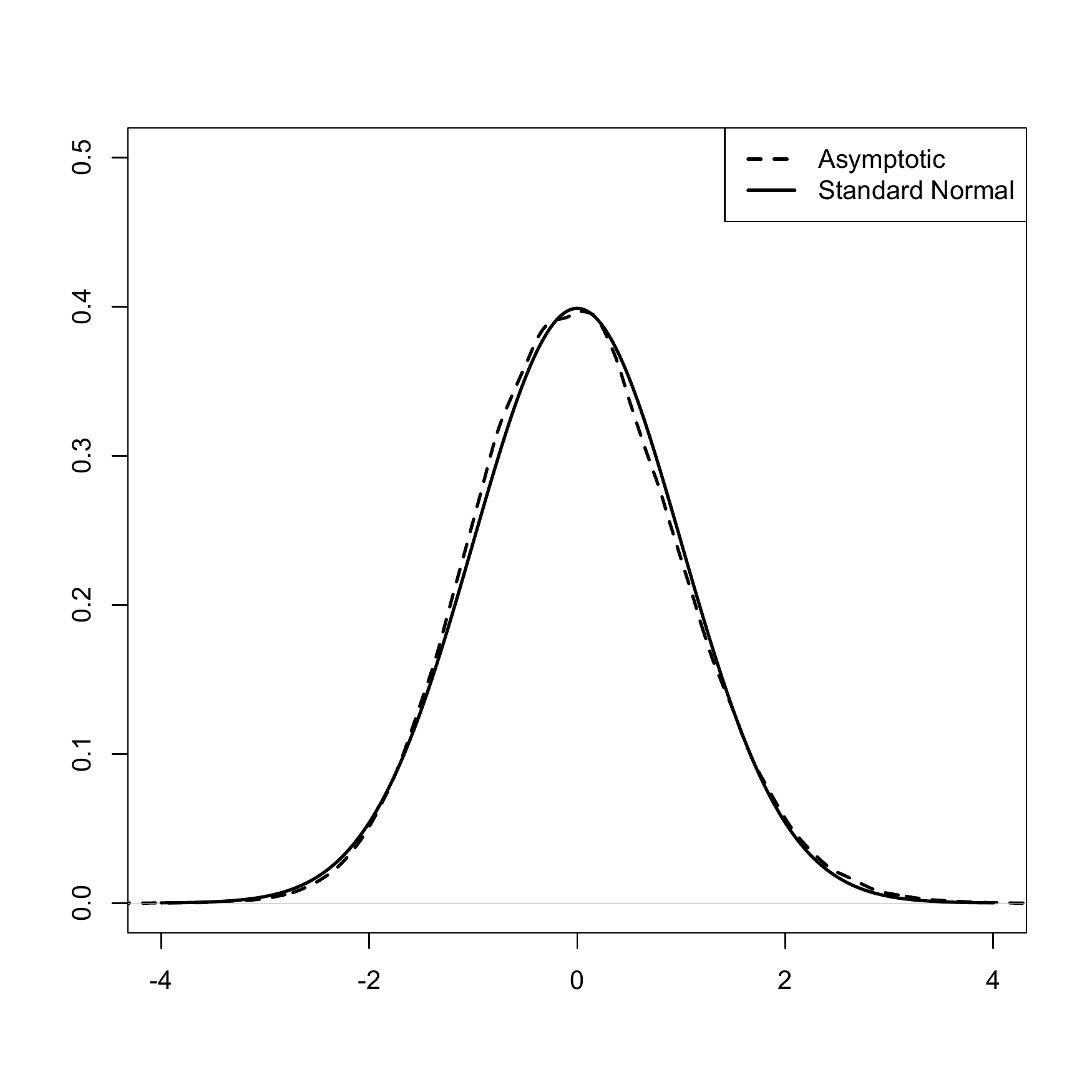}
  \includegraphics[width=8cm]{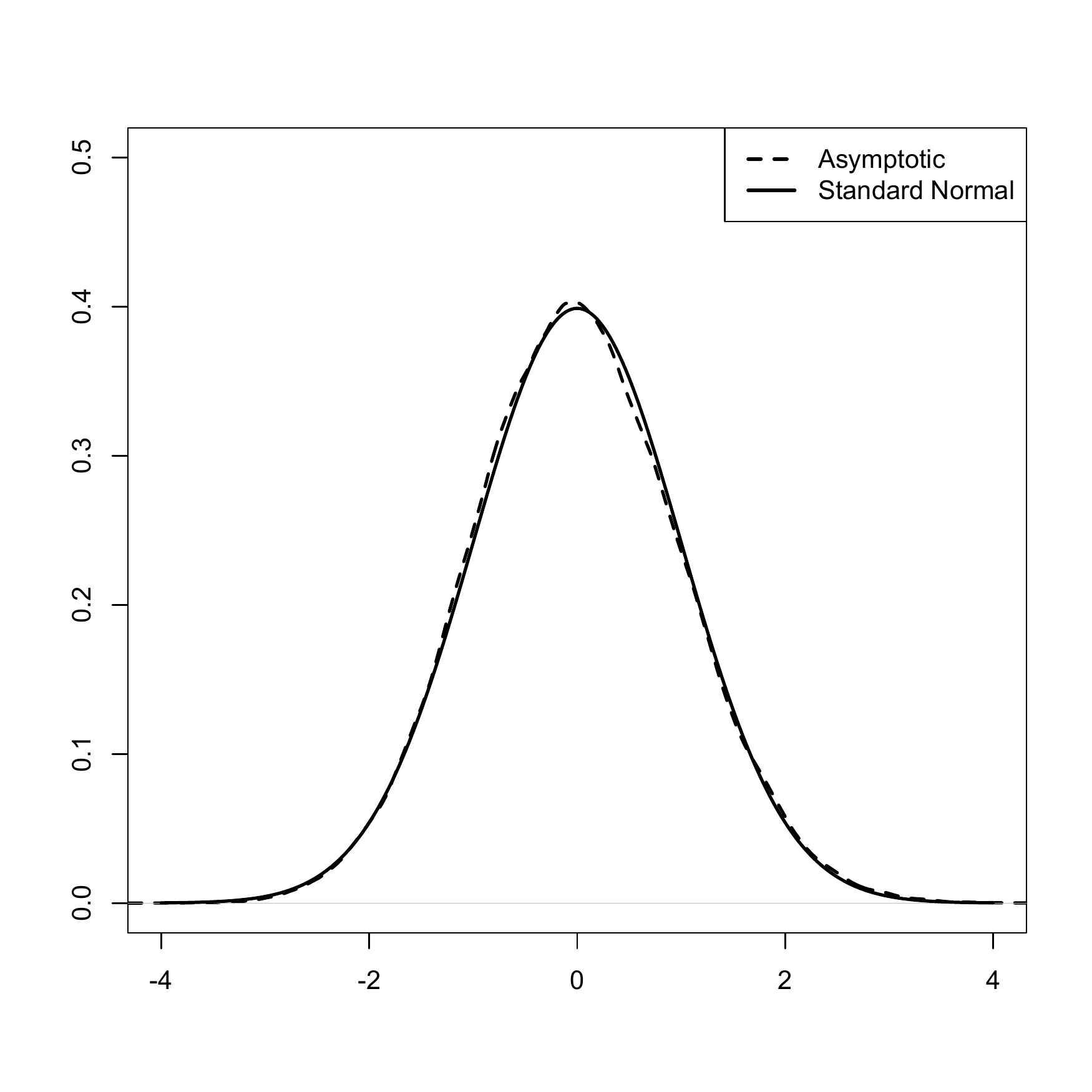}
  \begin{tabular}{lcrp{1.5in}}
(a)& $p=50$, $n=500$, $\bnu \sim\mathcal{TN} _q (\mathbf 0, \mathbf I_q) $.\\
\end{tabular}
\begin{tabular}{lcrp{1.5in}}
(b)&  $p=100$, $n=1000$, $\bnu \sim\mathcal{TN} _q (\mathbf 0, \mathbf I_q) $.\\
\end{tabular}
  \includegraphics[width=8cm]{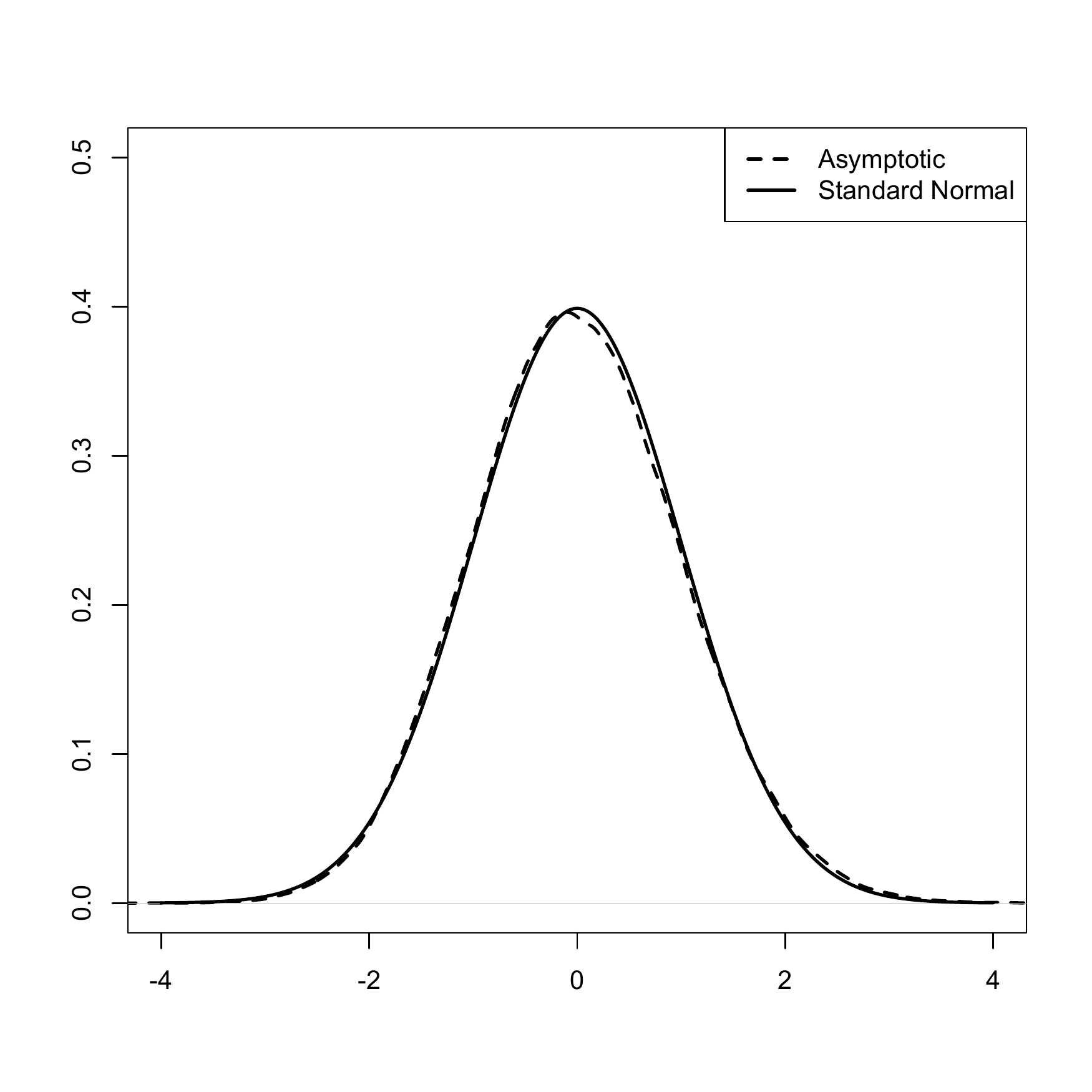}
  \includegraphics[width=8cm]{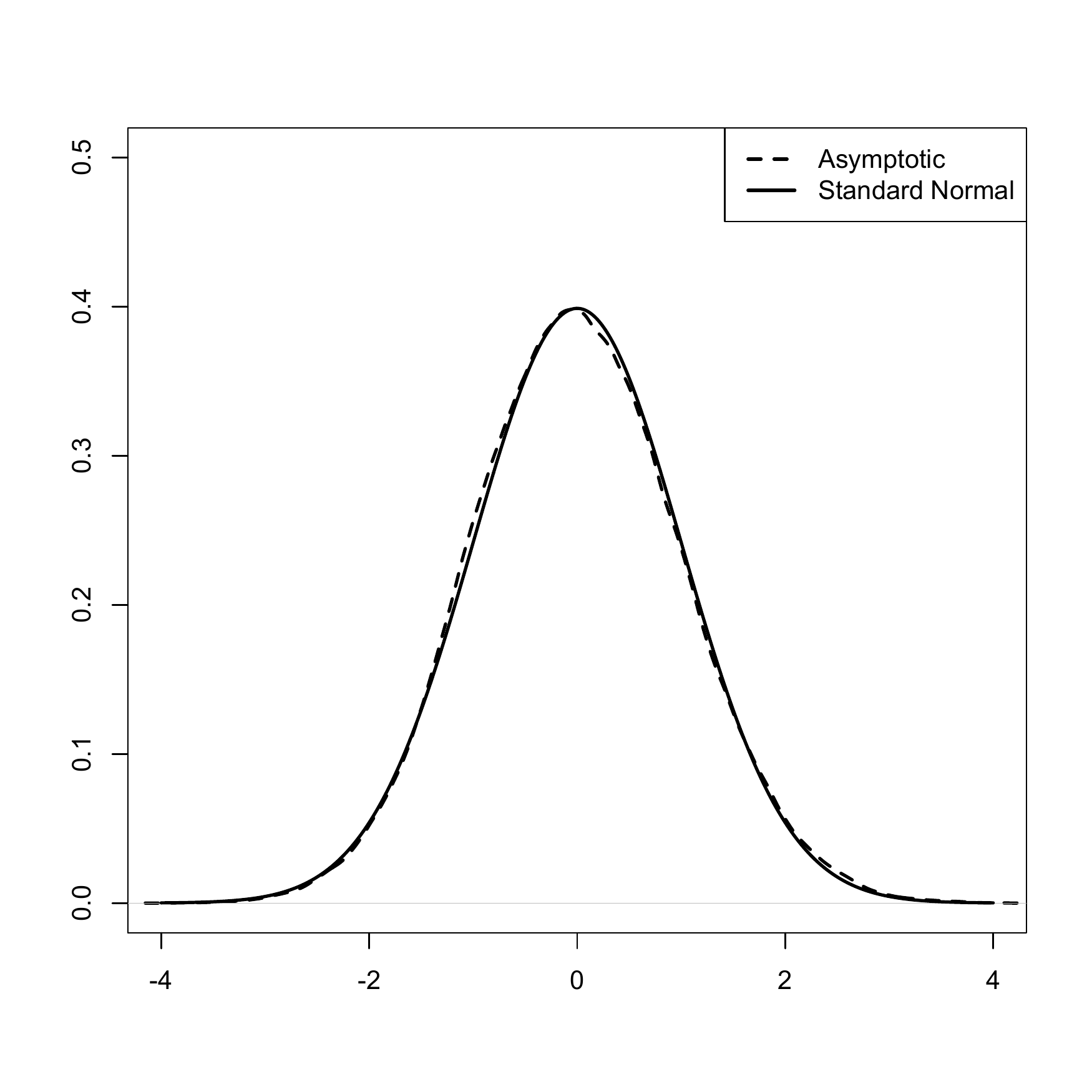}
  \begin{tabular}{lcrp{1.5in}}
(c)& $p=50$, $n=500$, $\bnu \sim\mathcal{GAL} _q (\mathbf 1_q, \mathbf I_q, 10).$\\
\end{tabular}
\begin{tabular}{lcrp{1.5in}}
(d)& $p=100$, $n=1000$, $\bnu \sim\mathcal{GAL} _q (\mathbf 1_q, \mathbf I_q, 10).$ \\
\end{tabular}
  \caption{The kernel density estimator of the asymptotic distribution as given in Theorem \ref{th2} for $c=0.1$.}
  \label{fig1}
\end{figure}

%----------------------------------------------------------------------------------------------------
% Figure 2
%----------------------------------------------------------------------------------------------------

\begin{figure}
  \includegraphics[width=8cm]{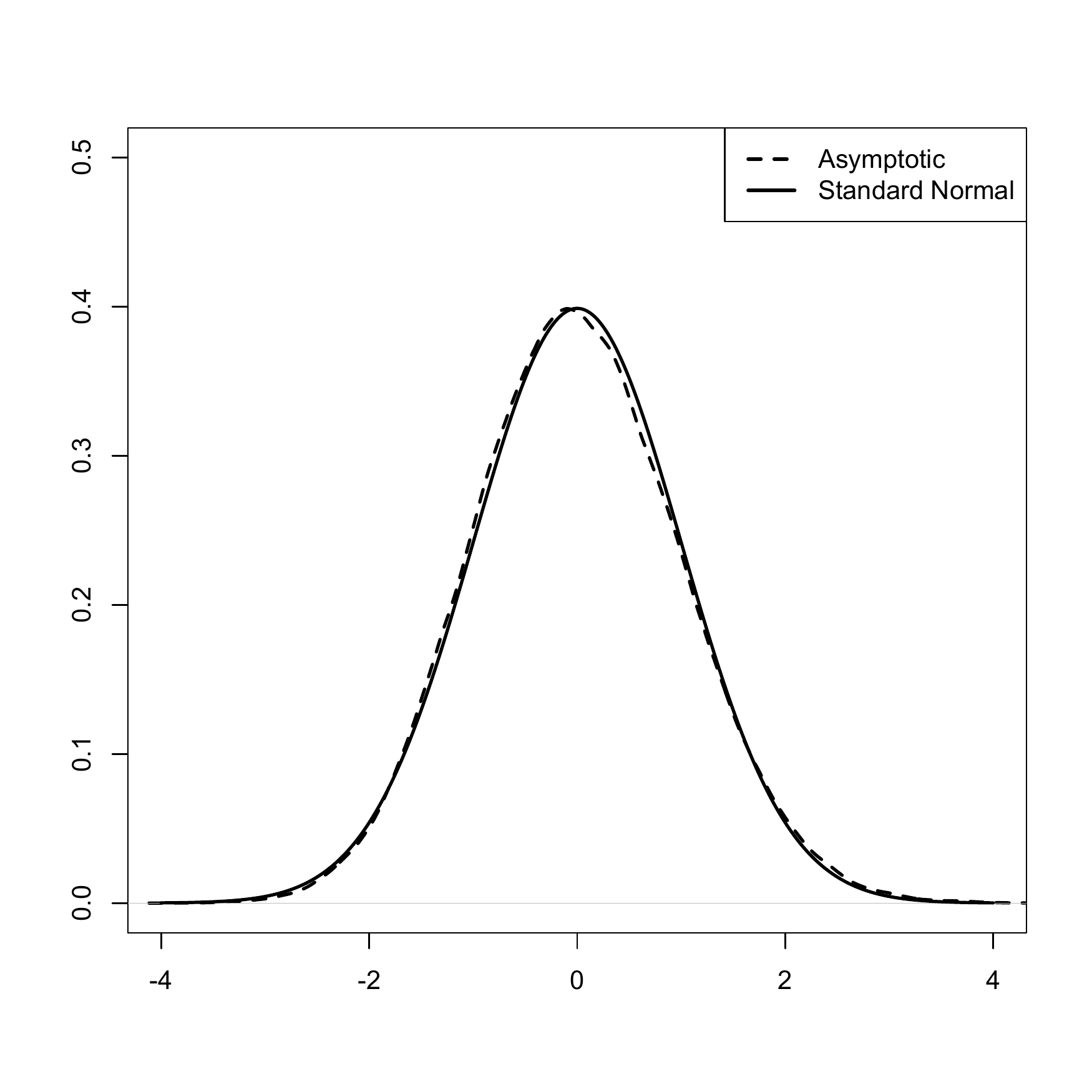}
  \includegraphics[width=8cm]{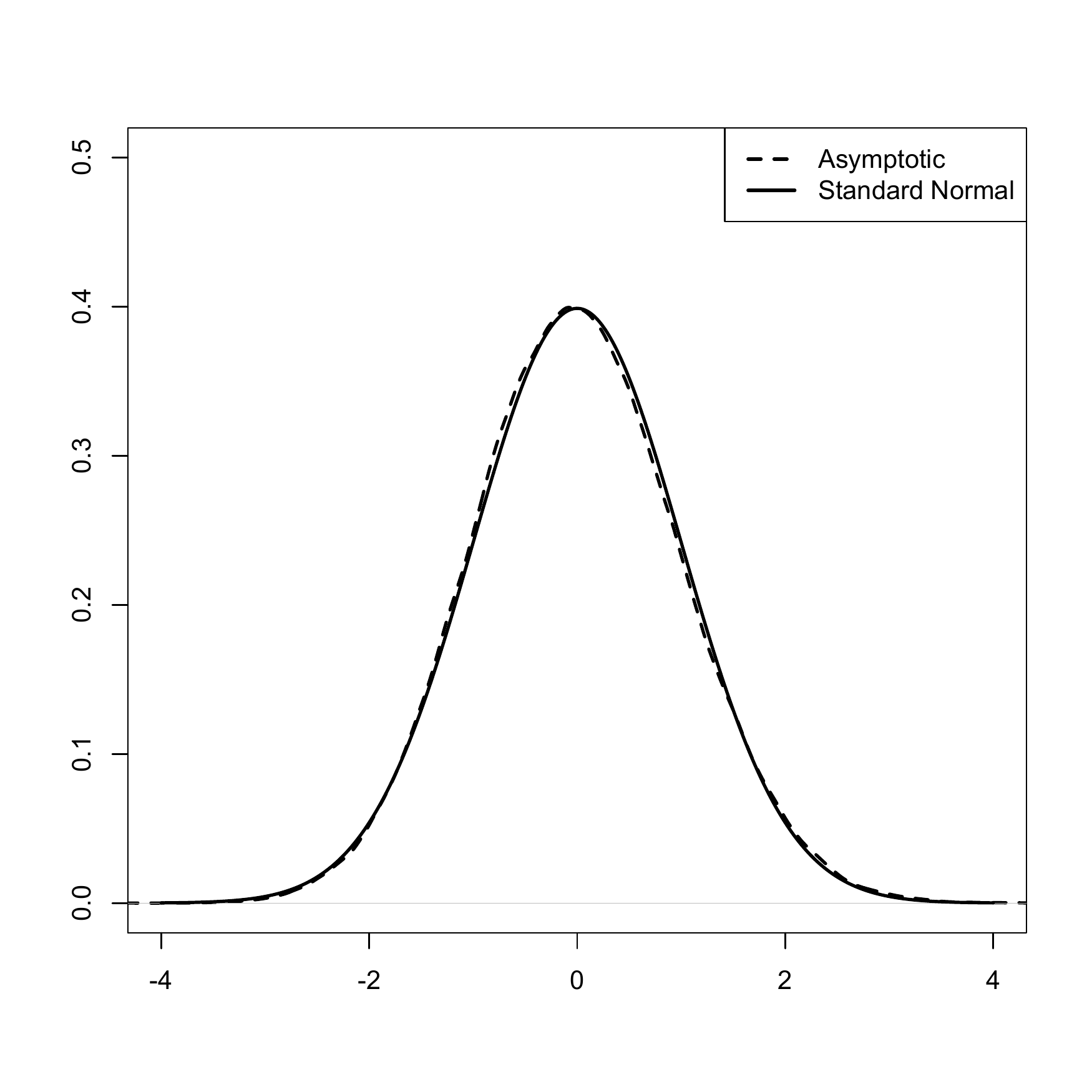}
  \begin{tabular}{lcrp{1.5in}}
(a)& $p=250$, $n=500$, $\bnu \sim\mathcal{TN} _q (\mathbf 0, \mathbf I_q) $.\\
\end{tabular}
\begin{tabular}{lcrp{1.5in}}
(b)&  $p=500$, $n=1000$, $\bnu \sim\mathcal{TN} _q (\mathbf 0, \mathbf I_q) $.\\
\end{tabular}
  \includegraphics[width=8cm]{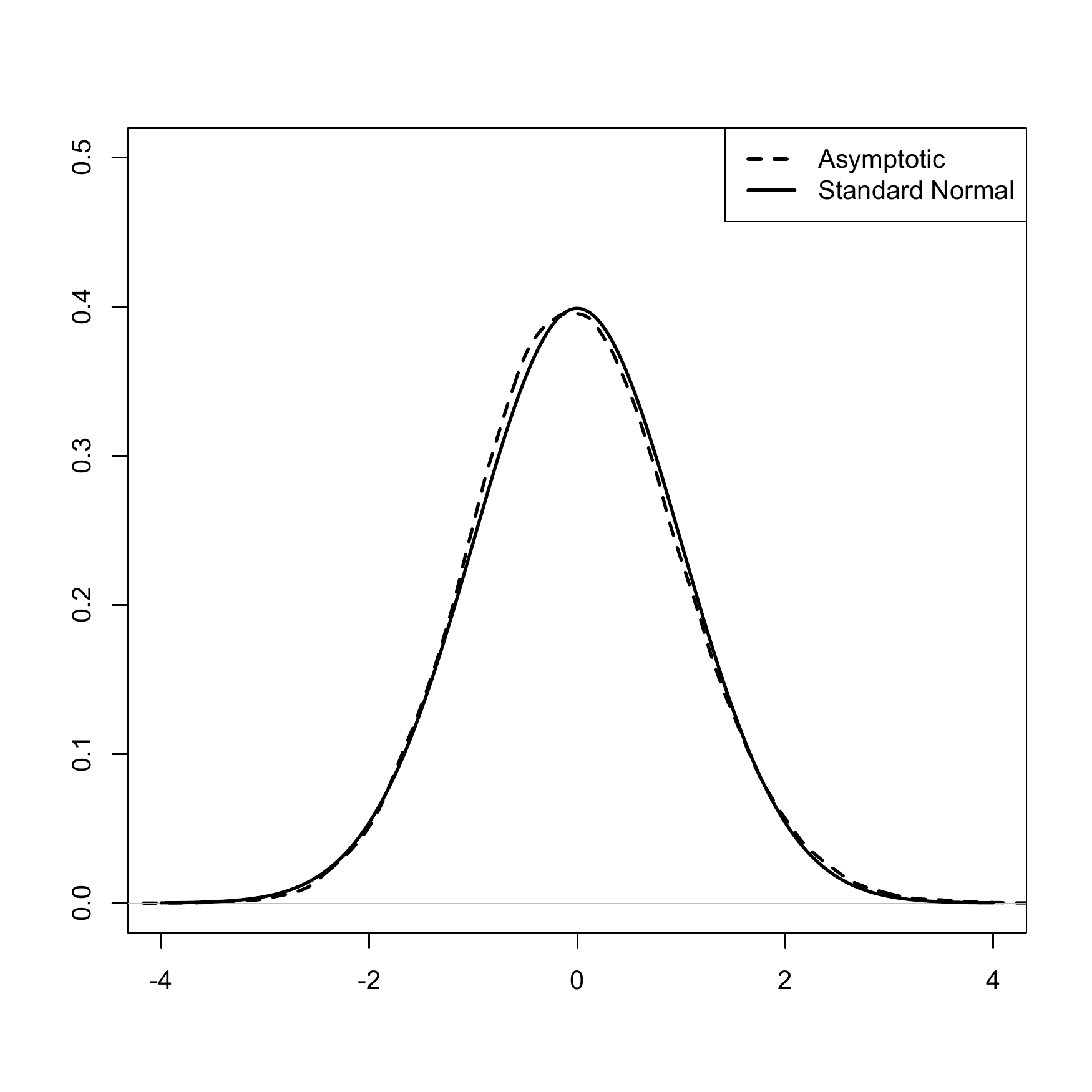}
  \includegraphics[width=8cm]{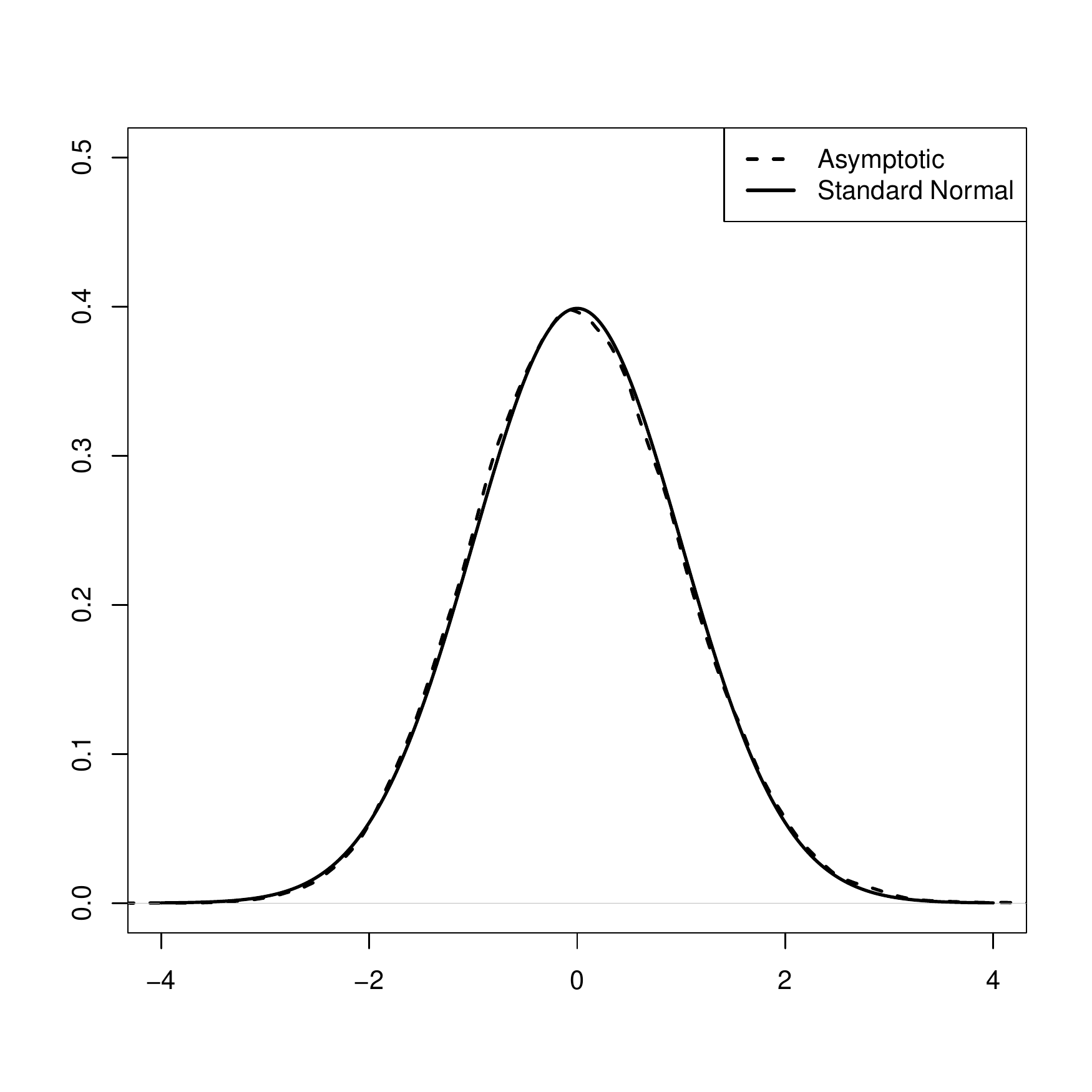}
  \begin{tabular}{lcrp{1.5in}}
(c)& $p=250$, $n=500$, $\bnu \sim\mathcal{GAL} _q (\mathbf 1_q, \mathbf I_q, 10).$\\
\end{tabular}
\begin{tabular}{lcrp{1.5in}}
(d)& $p=500$, $n=1000$, $\bnu \sim\mathcal{GAL} _q (\mathbf 1_q, \mathbf I_q, 10).$ \\
\end{tabular}
  \caption{The kernel density estimator of the asymptotic distribution as given in Theorem \ref{th2} for $c=0.5$.}
  \label{fig2}
\end{figure}

%----------------------------------------------------------------------------------------------------
% Figure 3
%----------------------------------------------------------------------------------------------------

\begin{figure}
  \includegraphics[width=8cm]{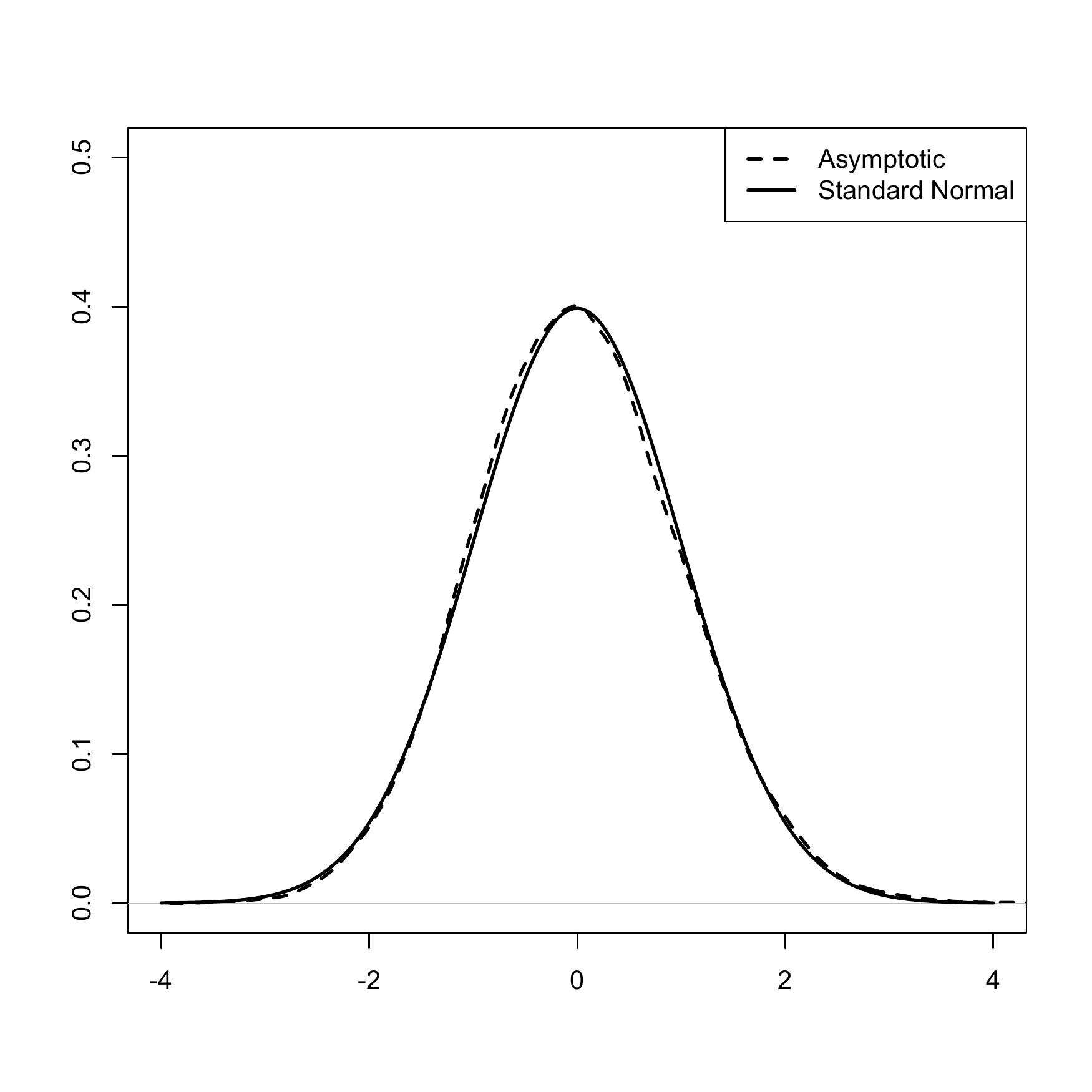}
  \includegraphics[width=8cm]{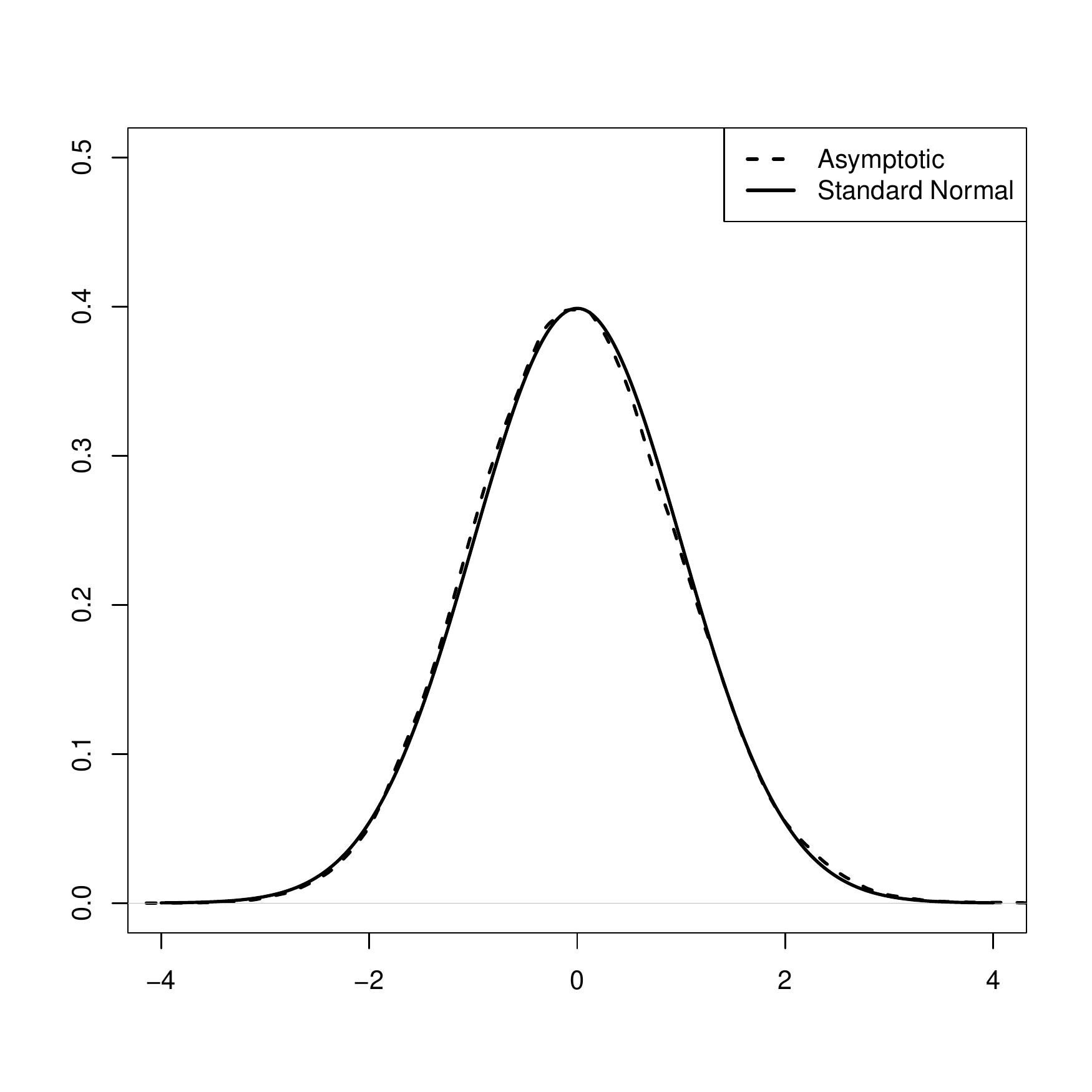}
  \begin{tabular}{lcrp{1.5in}}
(a)& $p=400$, $n=500$, $\bnu \sim\mathcal{TN} _q (\mathbf 0, \mathbf I_q) $.\\
\end{tabular}
\begin{tabular}{lcrp{1.5in}}
(b)&  $p=800$, $n=1000$, $\bnu \sim\mathcal{TN} _q (\mathbf 0, \mathbf I_q) $.\\
\end{tabular}
  \includegraphics[width=8cm]{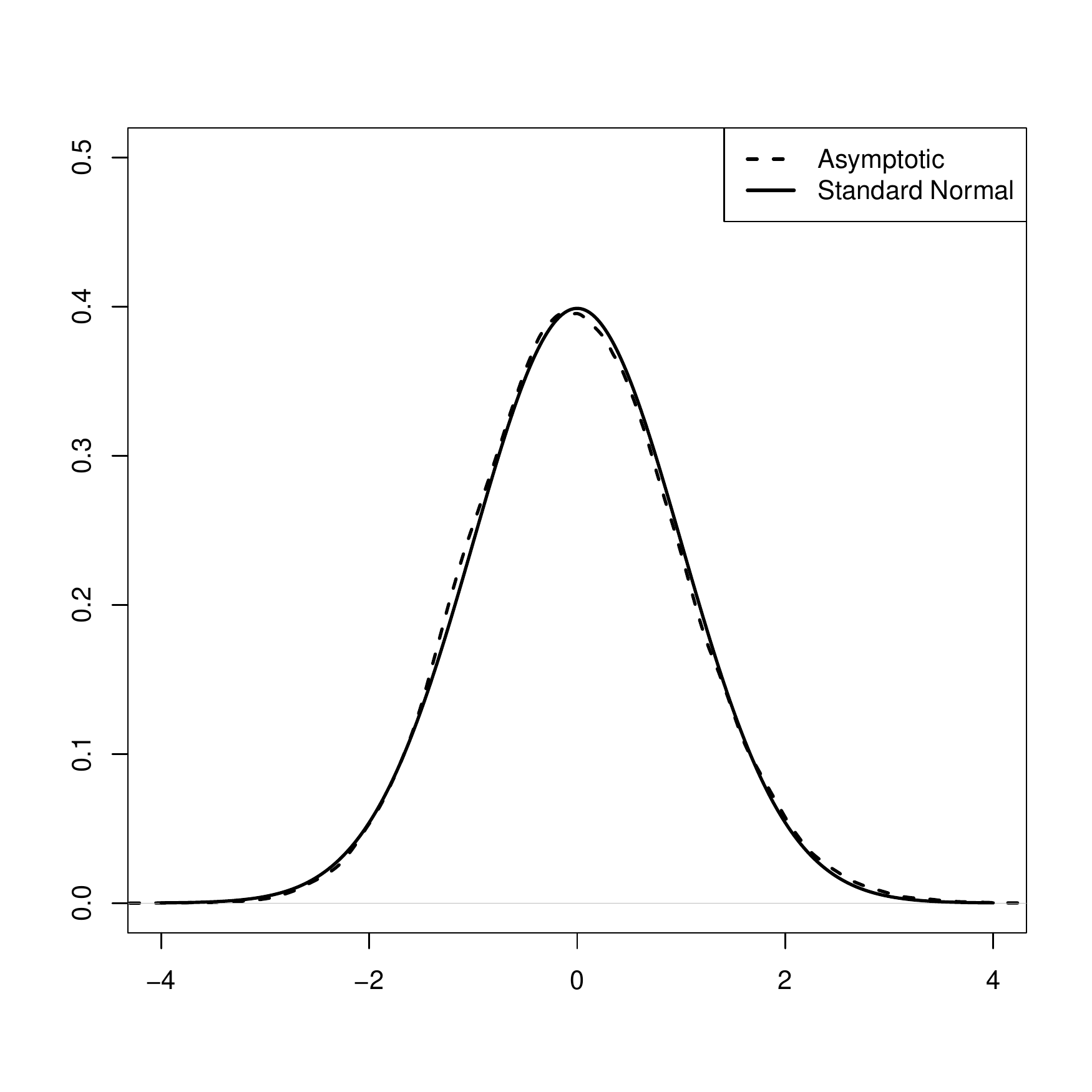}
  \includegraphics[width=8cm]{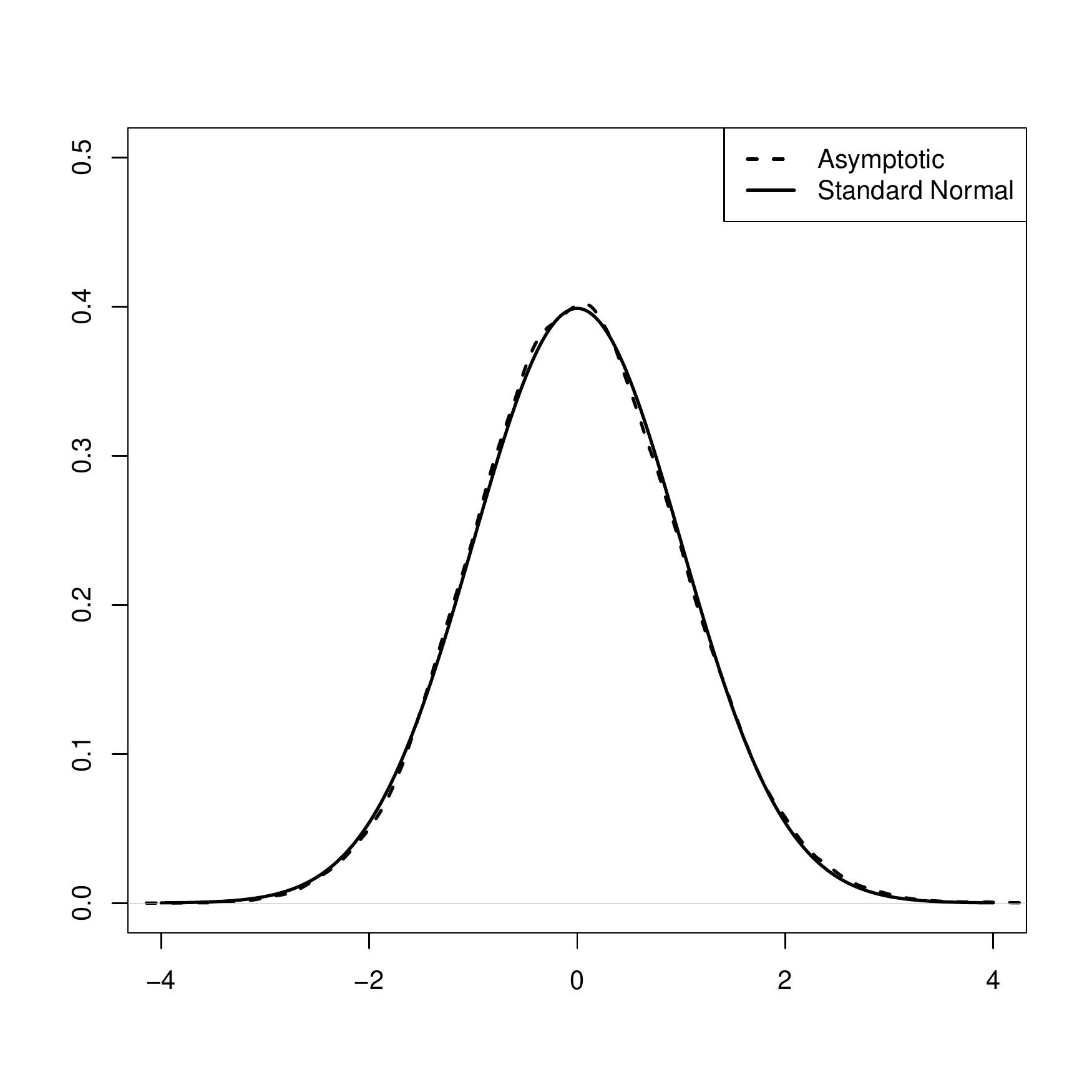}
  \begin{tabular}{lcrp{1.5in}}
(c)& $p=400$, $n=500$, $\bnu \sim\mathcal{GAL} _q (\mathbf 1_q, \mathbf I_q, 10).$\\
\end{tabular}
\begin{tabular}{lcrp{1.5in}}
(d)& $p=800$, $n=1000$, $\bnu \sim\mathcal{GAL} _q (\mathbf 1_q, \mathbf I_q, 10).$ \\
\end{tabular}
  \caption{The kernel density estimator of the asymptotic distribution as given in Theorem \ref{th2} for $c=0.8$.}
  \label{fig3}
\end{figure}

%----------------------------------------------------------------------------------------------------
% Figure 4
%----------------------------------------------------------------------------------------------------

\begin{figure}
  \includegraphics[width=8cm]{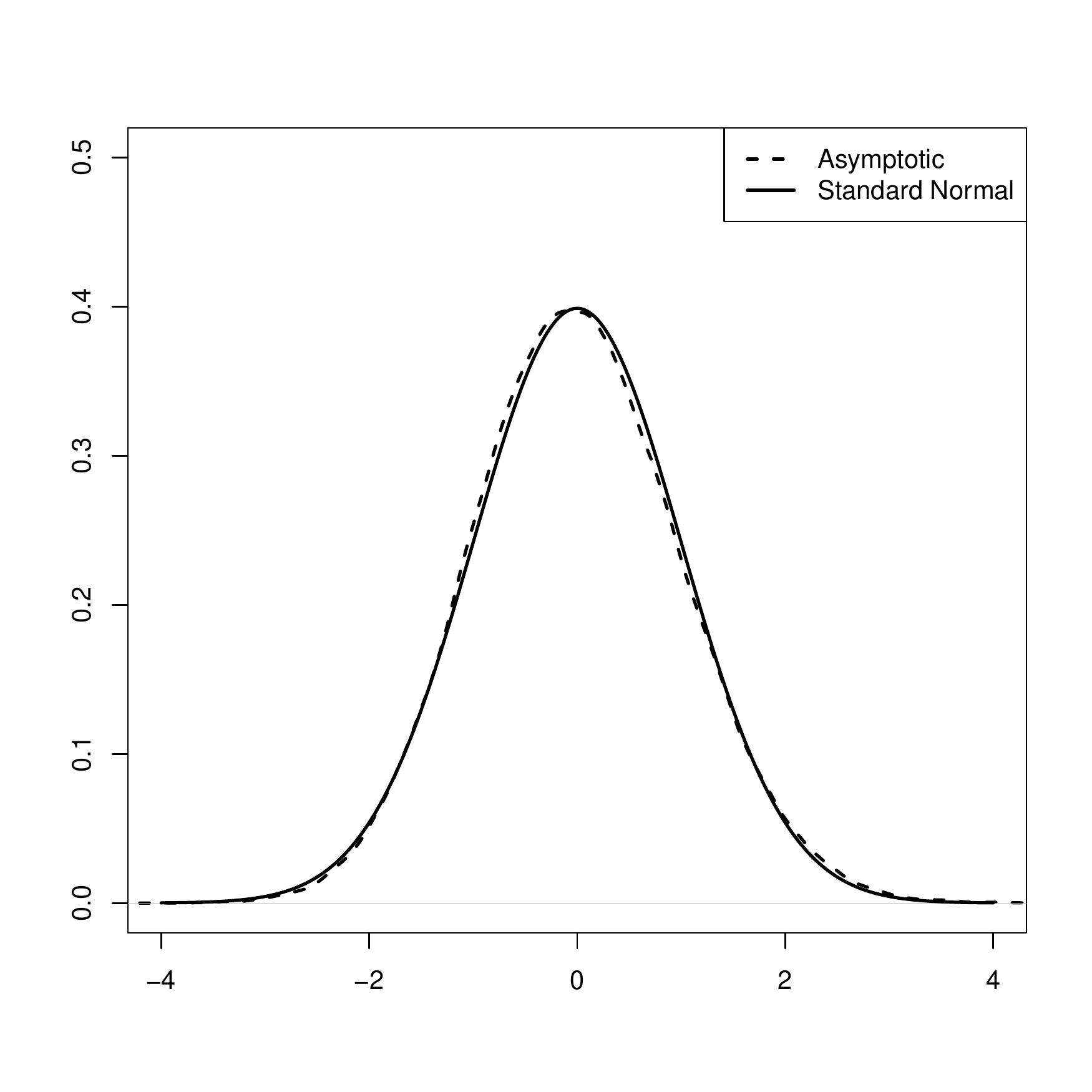}
  \includegraphics[width=8cm]{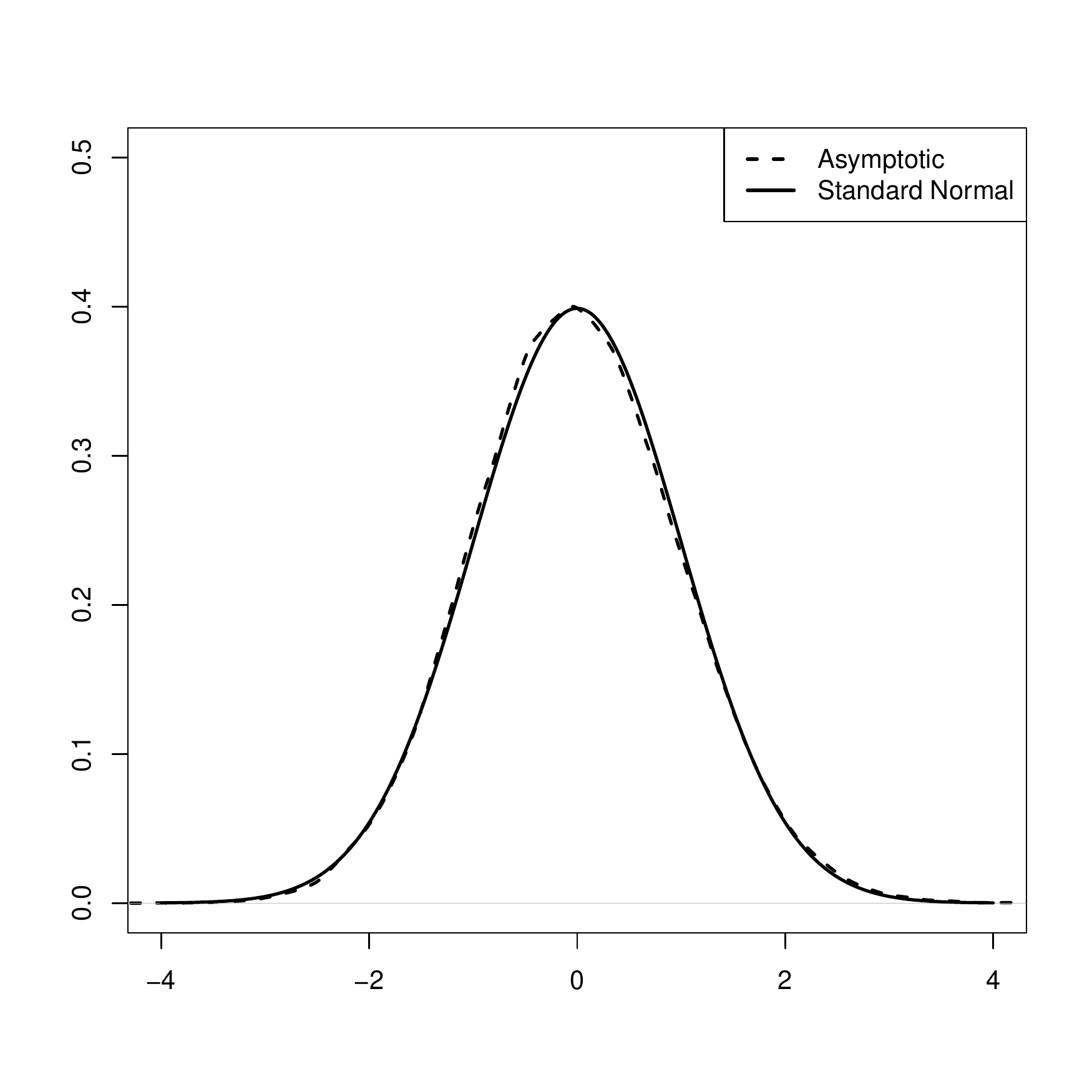}
  \begin{tabular}{lcrp{1.5in}}
(a)& $p=475$, $n=500$, $\bnu \sim\mathcal{TN} _q (\mathbf 0, \mathbf I_q) $.\\
\end{tabular}
\begin{tabular}{lcrp{1.5in}}
(b)&  $p=950$, $n=1000$, $\bnu \sim\mathcal{TN} _q (\mathbf 0, \mathbf I_q) $.\\
\end{tabular}
  \includegraphics[width=8cm]{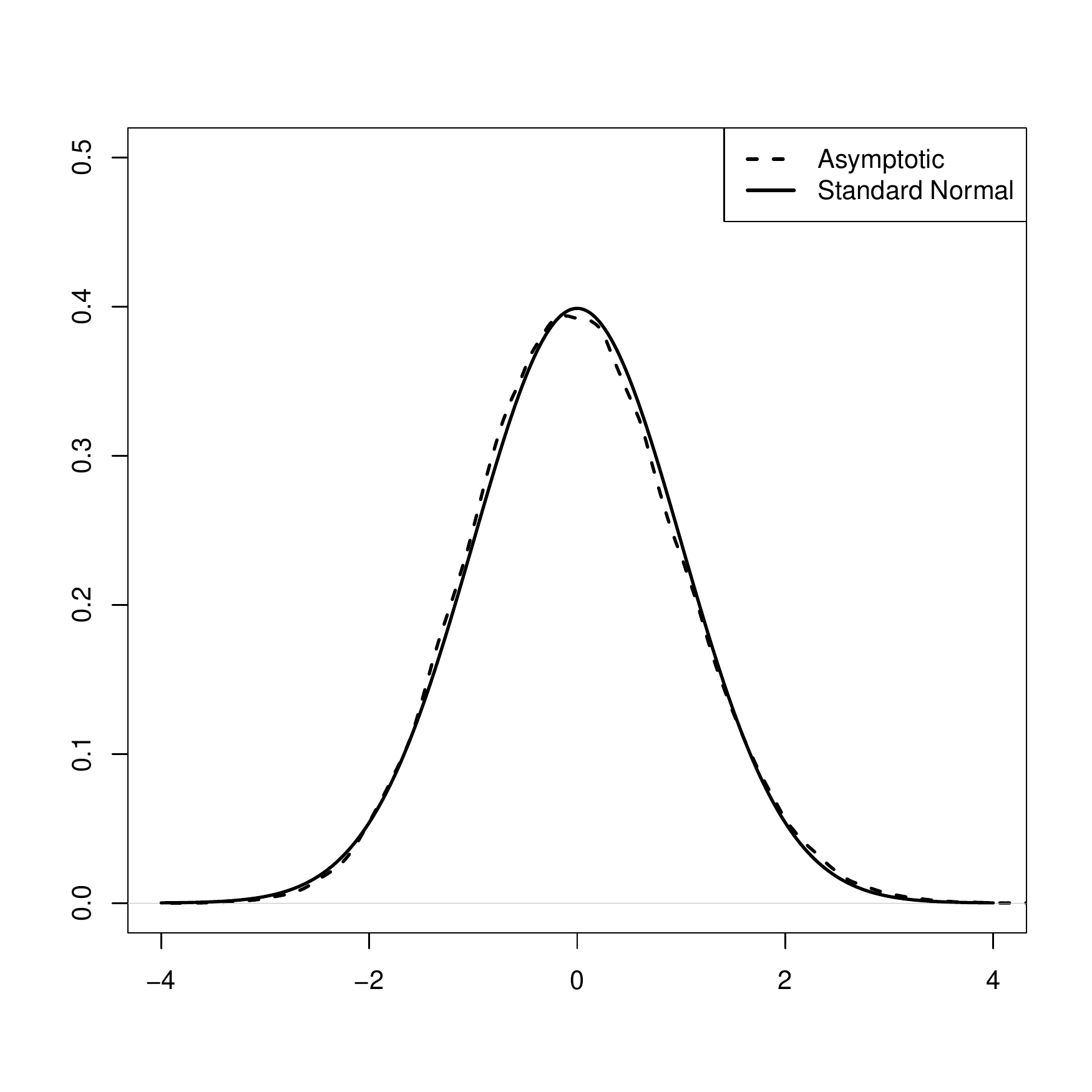}
  \includegraphics[width=8cm]{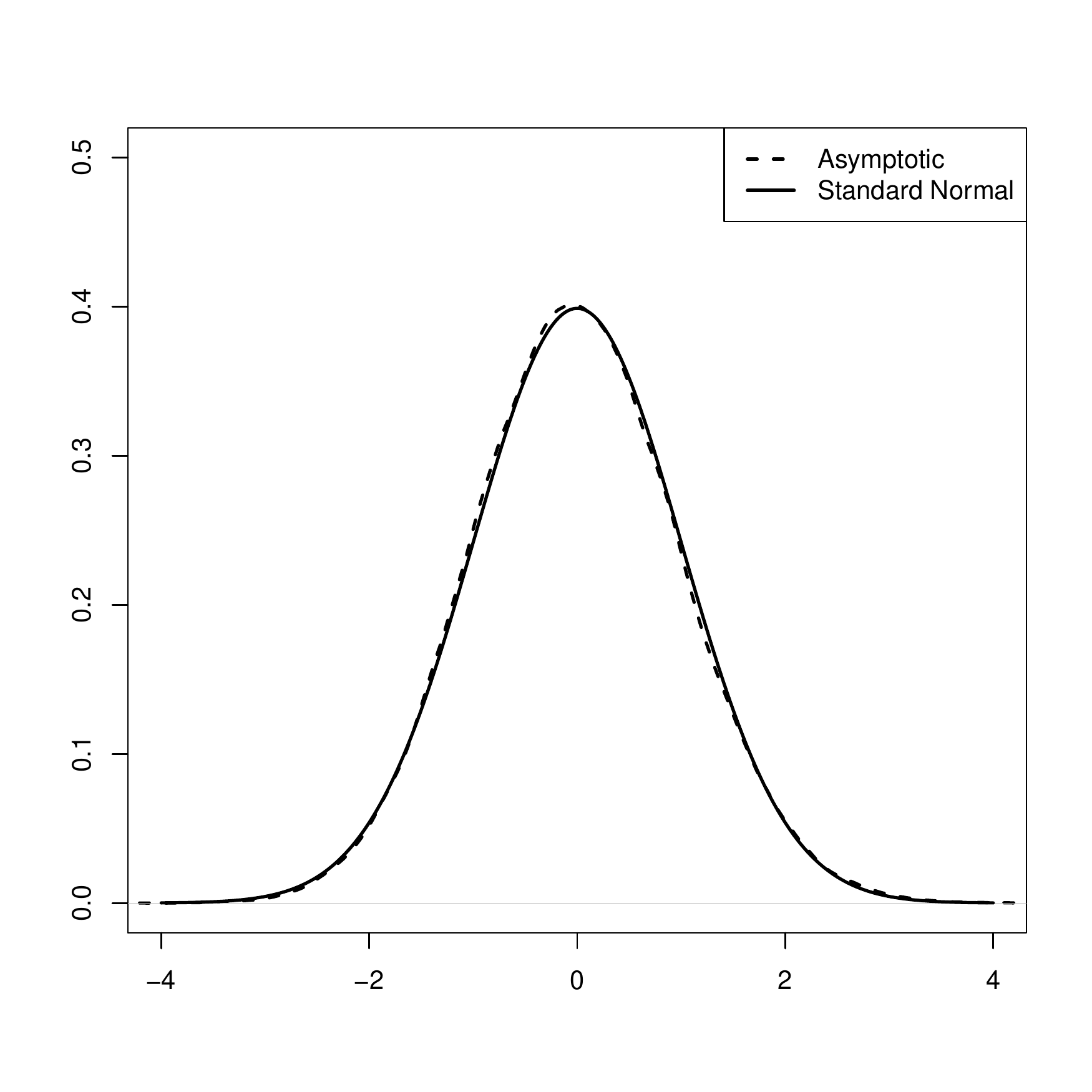}
  \begin{tabular}{lcrp{1.5in}}
(c)& $p=475$, $n=500$, $\bnu \sim\mathcal{GAL} _q (\mathbf 1_q, \mathbf I_q, 10).$\\
\end{tabular}
\begin{tabular}{lcrp{1.5in}}
(d)& $p=950$, $n=1000$, $\bnu \sim\mathcal{GAL} _q (\mathbf 1_q, \mathbf I_q, 10).$ \\
\end{tabular}
  \caption{The kernel density estimator of the asymptotic distribution as given in Theorem \ref{th2} for $c=0.95$.}
  \label{fig4}
\end{figure}

%----------------------------------------------------------------------------------------------------
% Figure 5
%----------------------------------------------------------------------------------------------------

\begin{figure}
  \includegraphics[width=8cm]{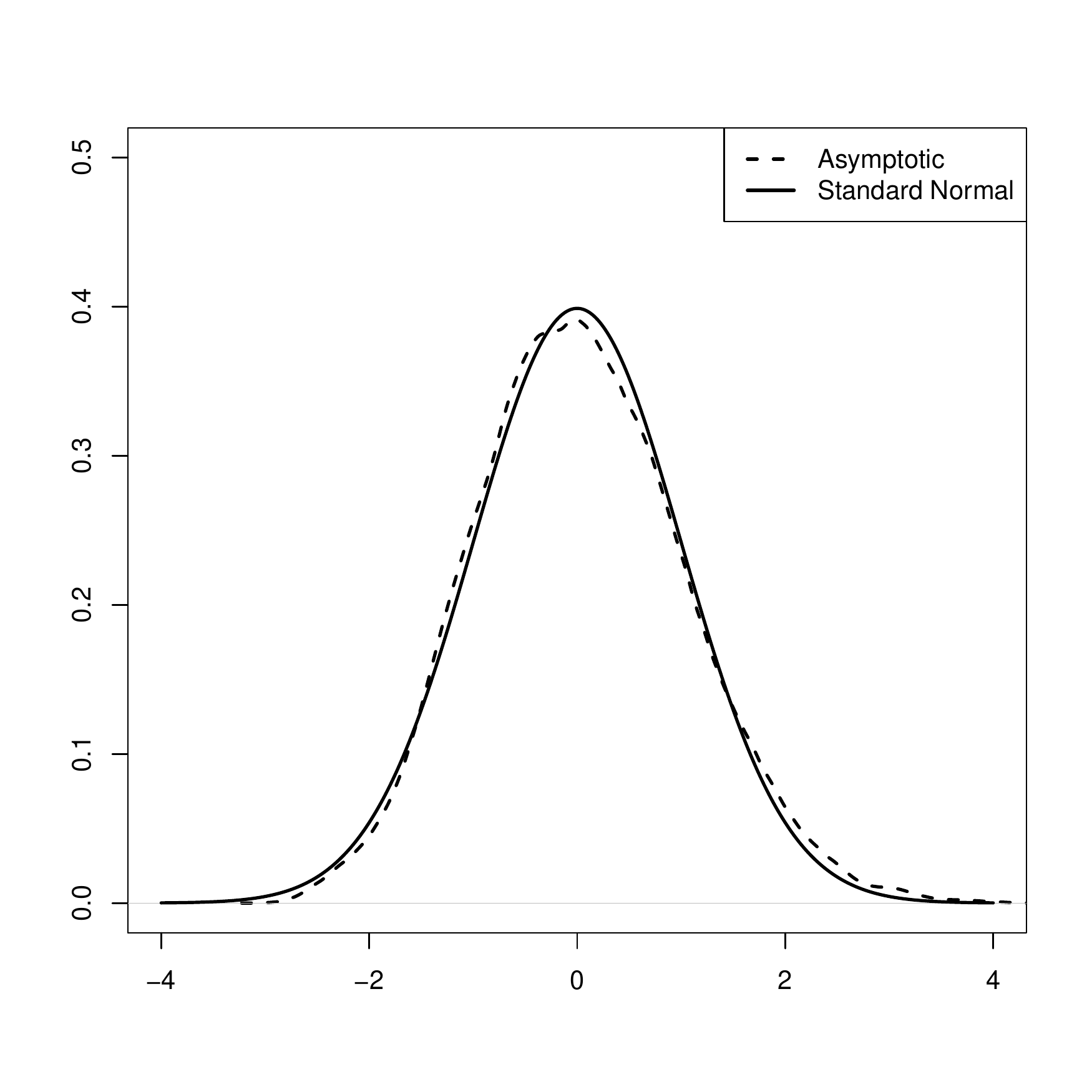}
  \includegraphics[width=8cm]{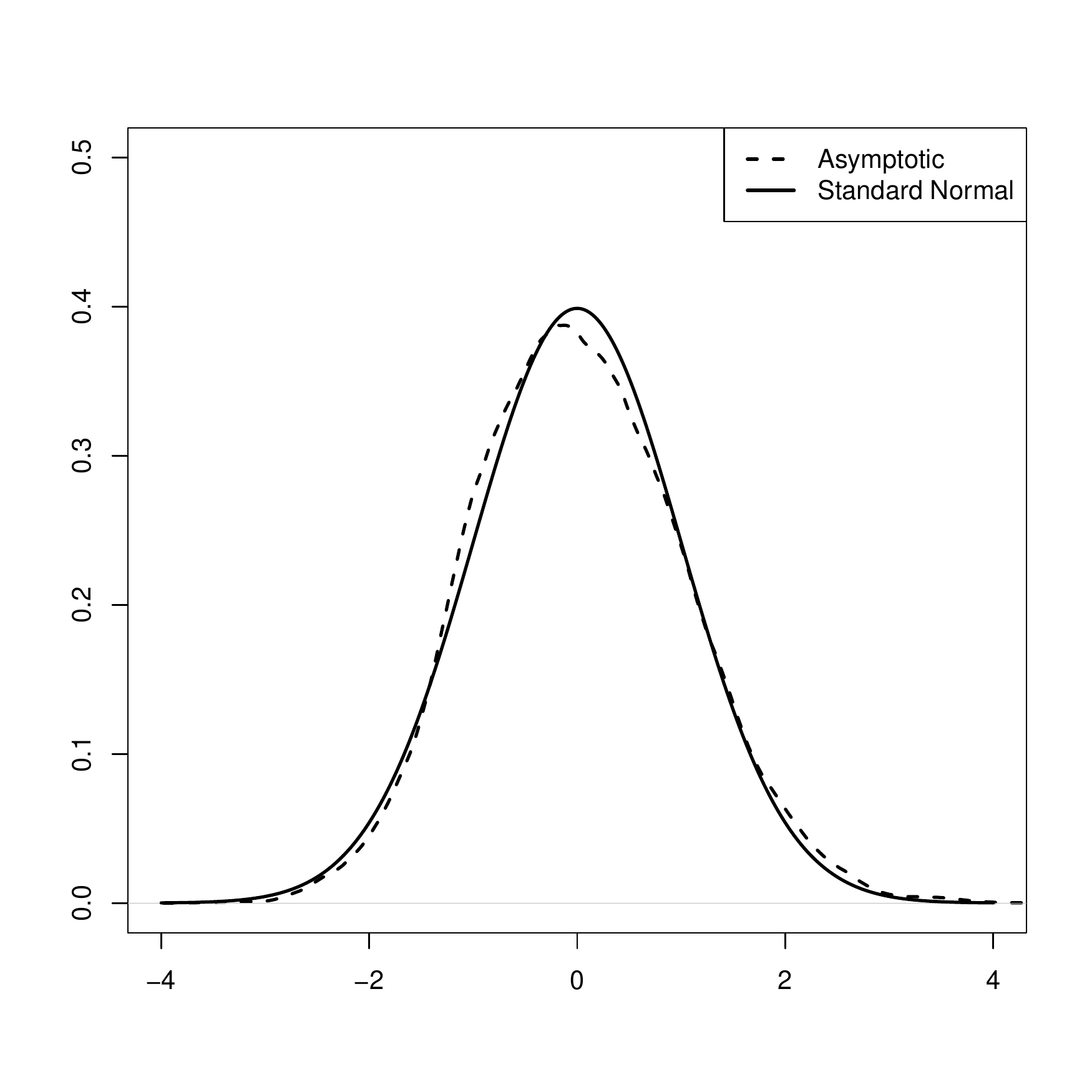}
  \begin{tabular}{lcrp{1.5in}}
(a)& $p=50$, $n=500$, $\bnu \sim\mathcal{TN} _q (\mathbf 0, \mathbf I_q) $.\\
\end{tabular}
\begin{tabular}{lcrp{1.5in}}
(b)&  $p=100$, $n=1000$, $\bnu \sim\mathcal{TN} _q (\mathbf 0, \mathbf I_q) $.\\
\end{tabular}
  \includegraphics[width=8cm]{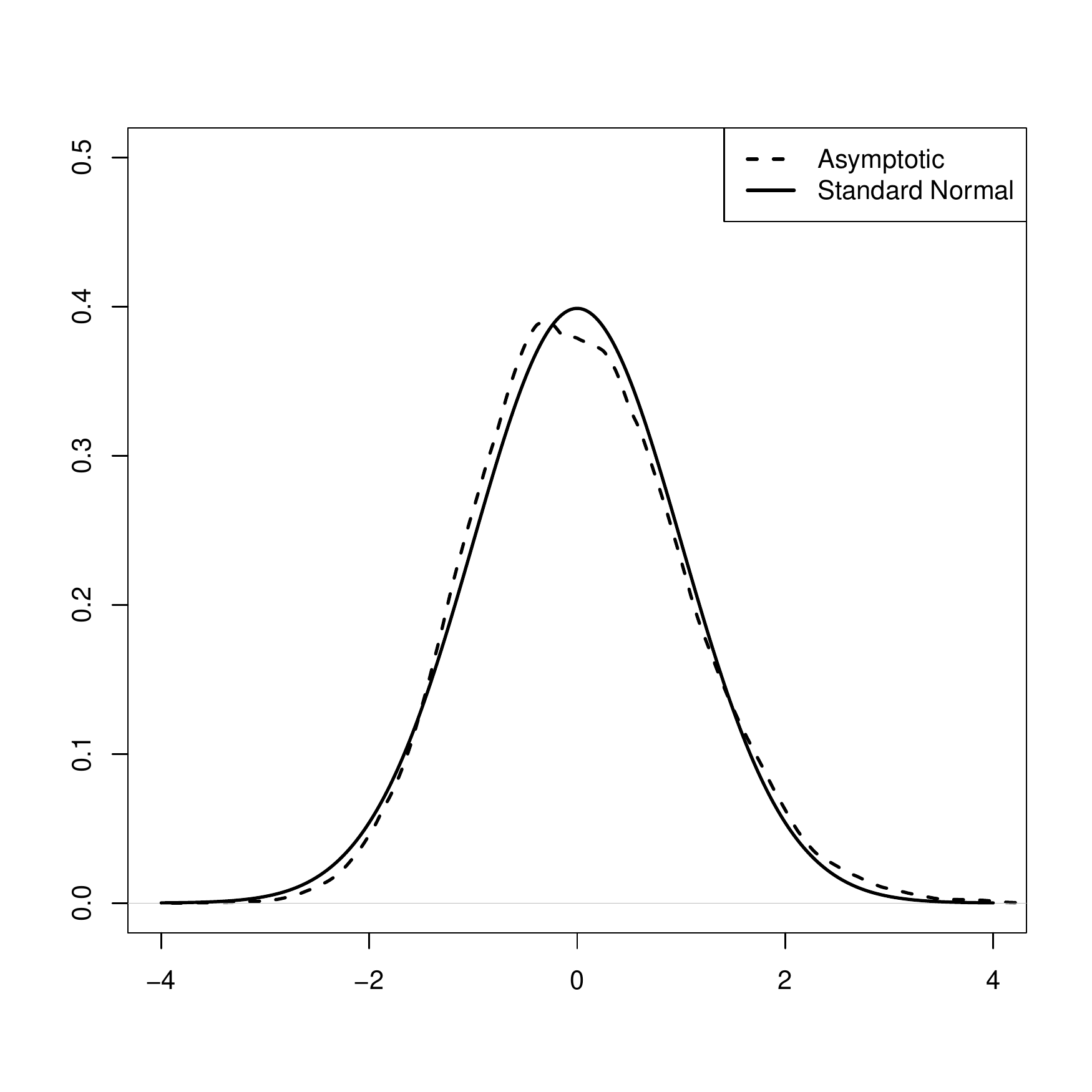}
  \includegraphics[width=8cm]{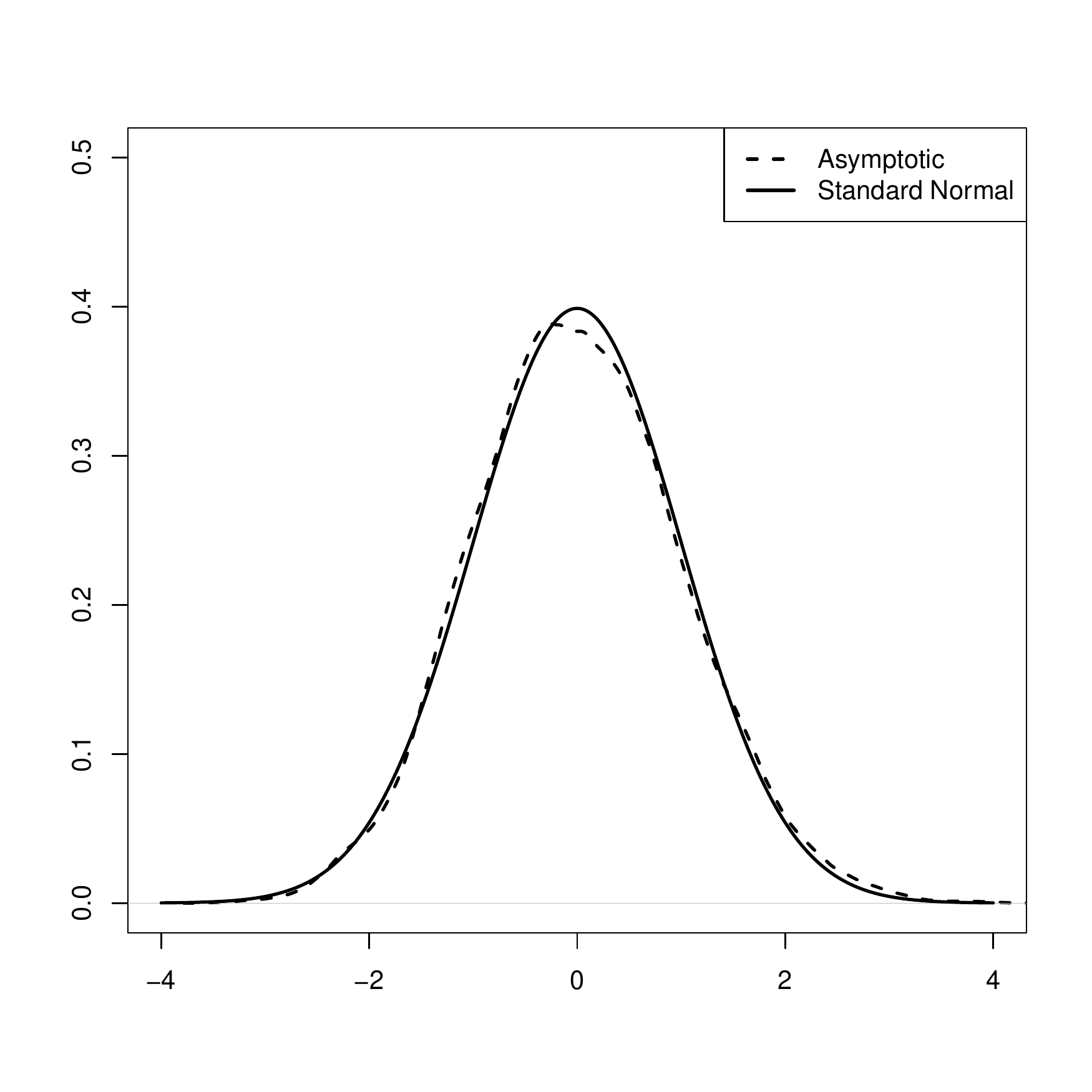}
  \begin{tabular}{lcrp{1.5in}}
(c)& $p=50$, $n=500$, $\bnu \sim\mathcal{GAL} _q (\mathbf 1_q, \mathbf I_q, 10).$\\
\end{tabular}
\begin{tabular}{lcrp{1.5in}}
(d)& $p=100$, $n=1000$, $\bnu \sim\mathcal{GAL} _q (\mathbf 1_q, \mathbf I_q, 10).$ \\
\end{tabular}
  \caption{The kernel density estimator of the asymptotic distribution as given in Theorem \ref{th4} for $c=0.1$.}
  \label{fig5}
\end{figure}

%----------------------------------------------------------------------------------------------------
% Figure 6
%----------------------------------------------------------------------------------------------------

\begin{figure}
  \includegraphics[width=8cm]{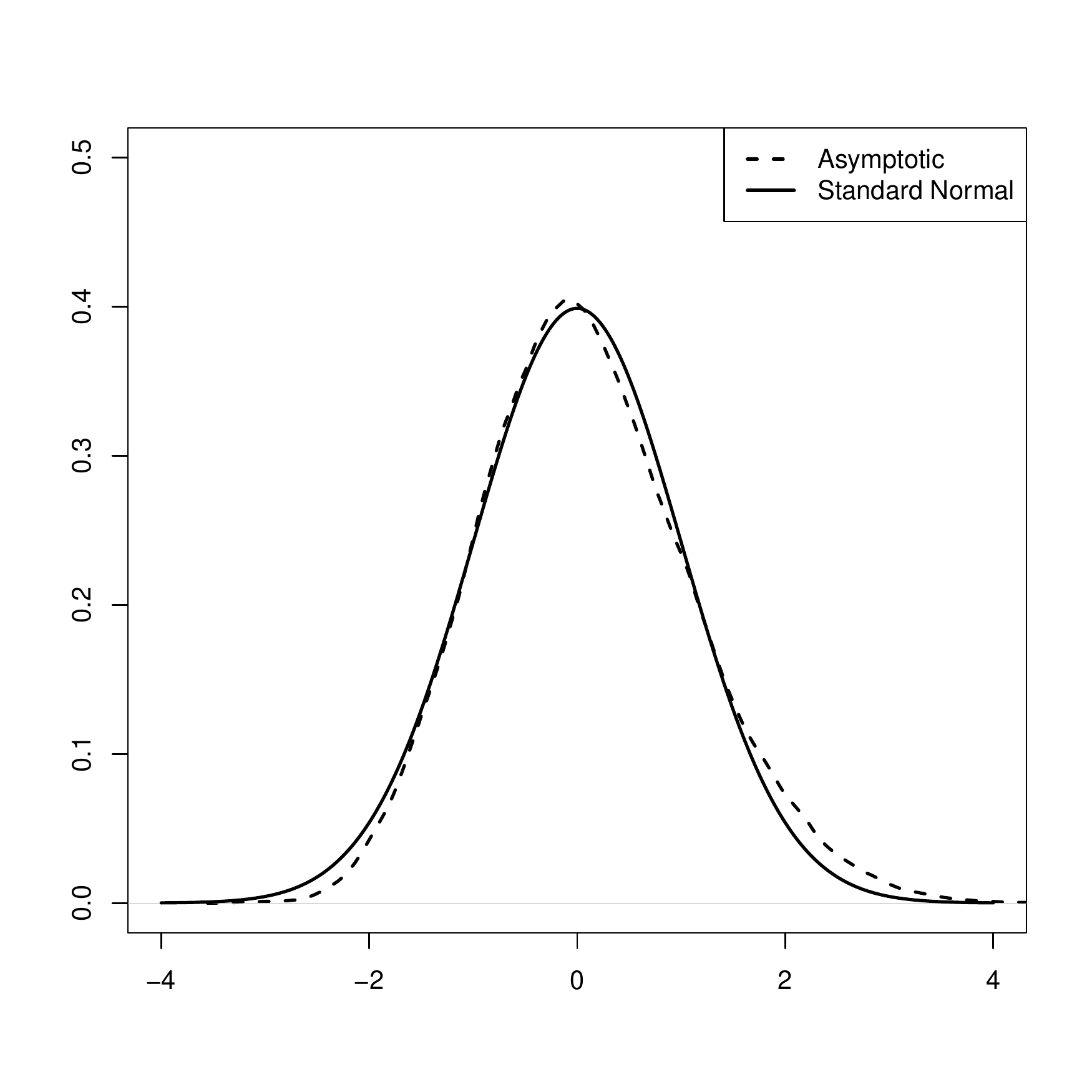}
  \includegraphics[width=8cm]{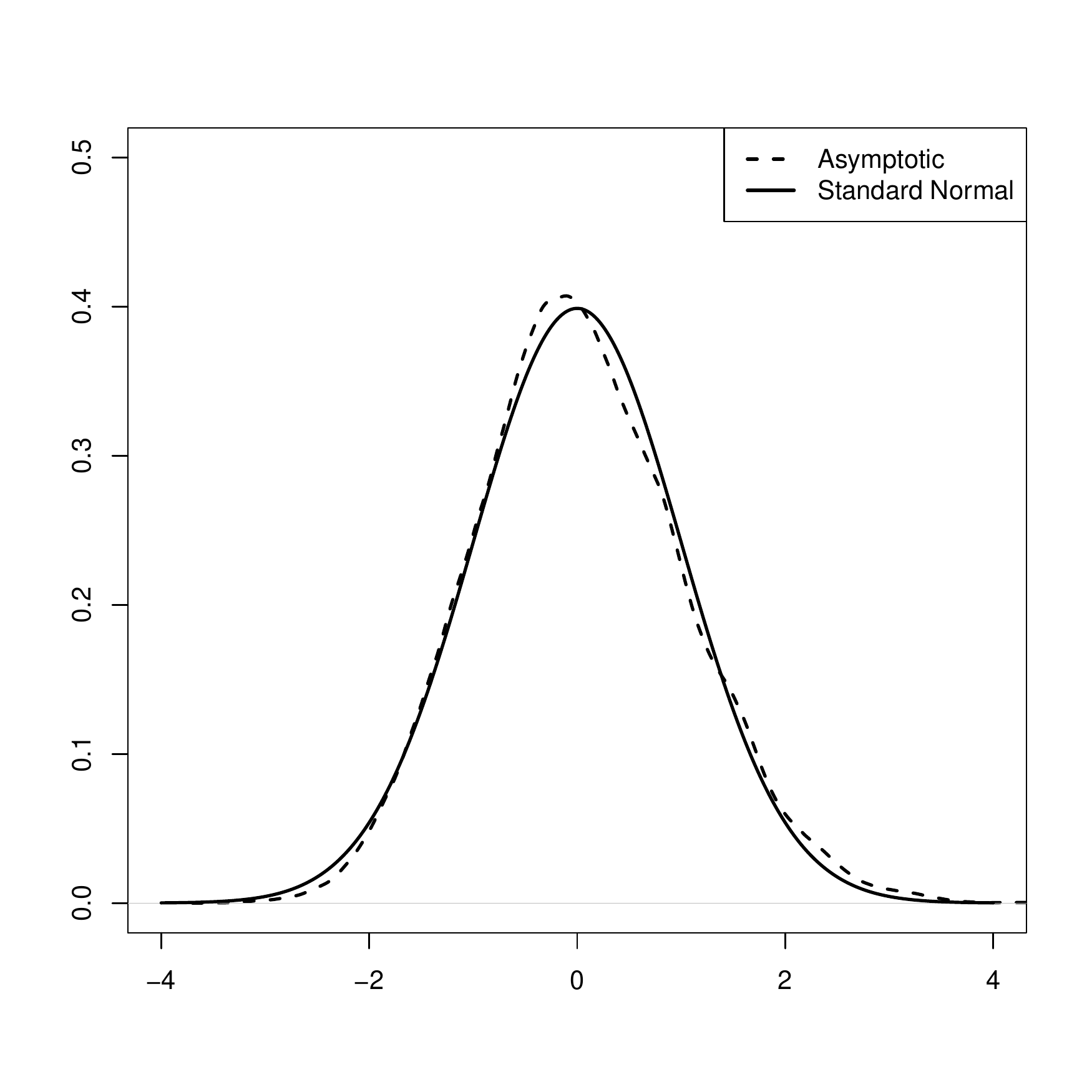}
  \begin{tabular}{lcrp{1.5in}}
(a)& $p=250$, $n=500$, $\bnu \sim\mathcal{TN} _q (\mathbf 0, \mathbf I_q) $.\\
\end{tabular}
\begin{tabular}{lcrp{1.5in}}
(b)&  $p=500$, $n=1000$, $\bnu \sim\mathcal{TN} _q (\mathbf 0, \mathbf I_q) $.\\
\end{tabular}
  \includegraphics[width=8cm]{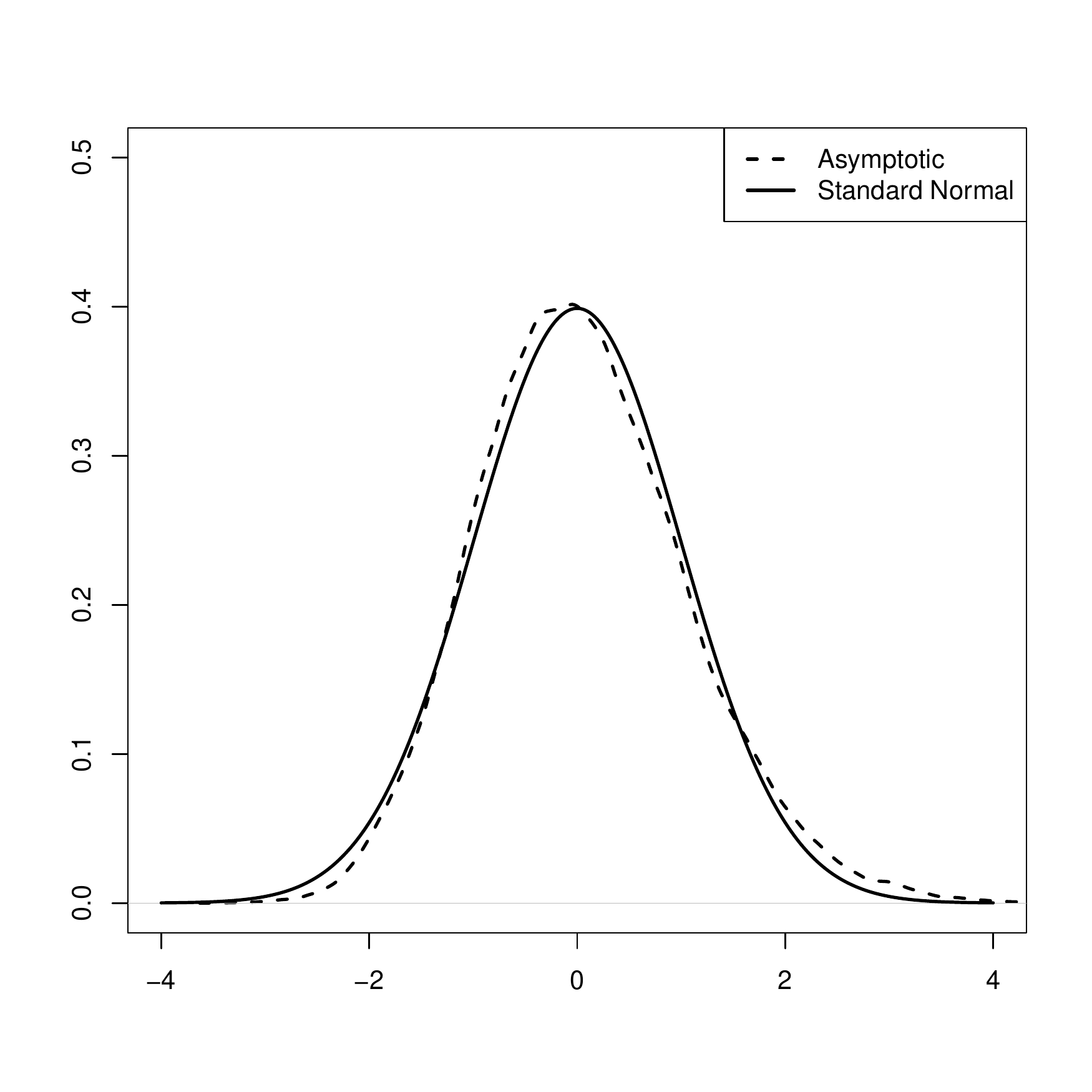}
  \includegraphics[width=8cm]{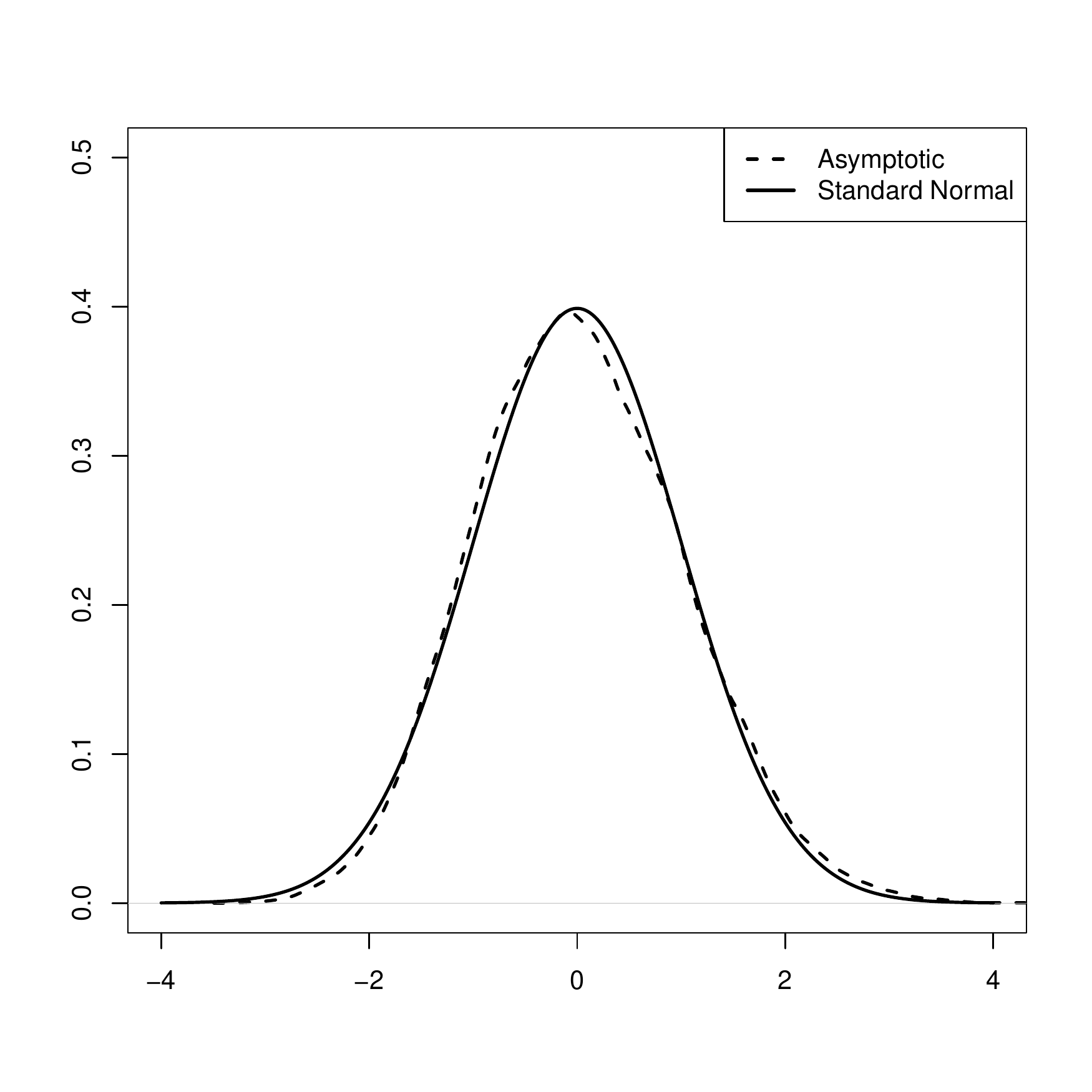}
  \begin{tabular}{lcrp{1.5in}}
(c)& $p=250$, $n=500$, $\bnu \sim\mathcal{GAL} _q (\mathbf 1_q, \mathbf I_q, 10).$\\
\end{tabular}
\begin{tabular}{lcrp{1.5in}}
(d)& $p=500$, $n=1000$, $\bnu \sim\mathcal{GAL} _q (\mathbf 1_q, \mathbf I_q, 10).$ \\
\end{tabular}
  \caption{The kernel density estimator of the asymptotic distribution as given in Theorem \ref{th4} for $c=0.5$.}
  \label{fig6}
\end{figure}

%----------------------------------------------------------------------------------------------------
% Figure 7
%----------------------------------------------------------------------------------------------------

\begin{figure}
  \includegraphics[width=8cm]{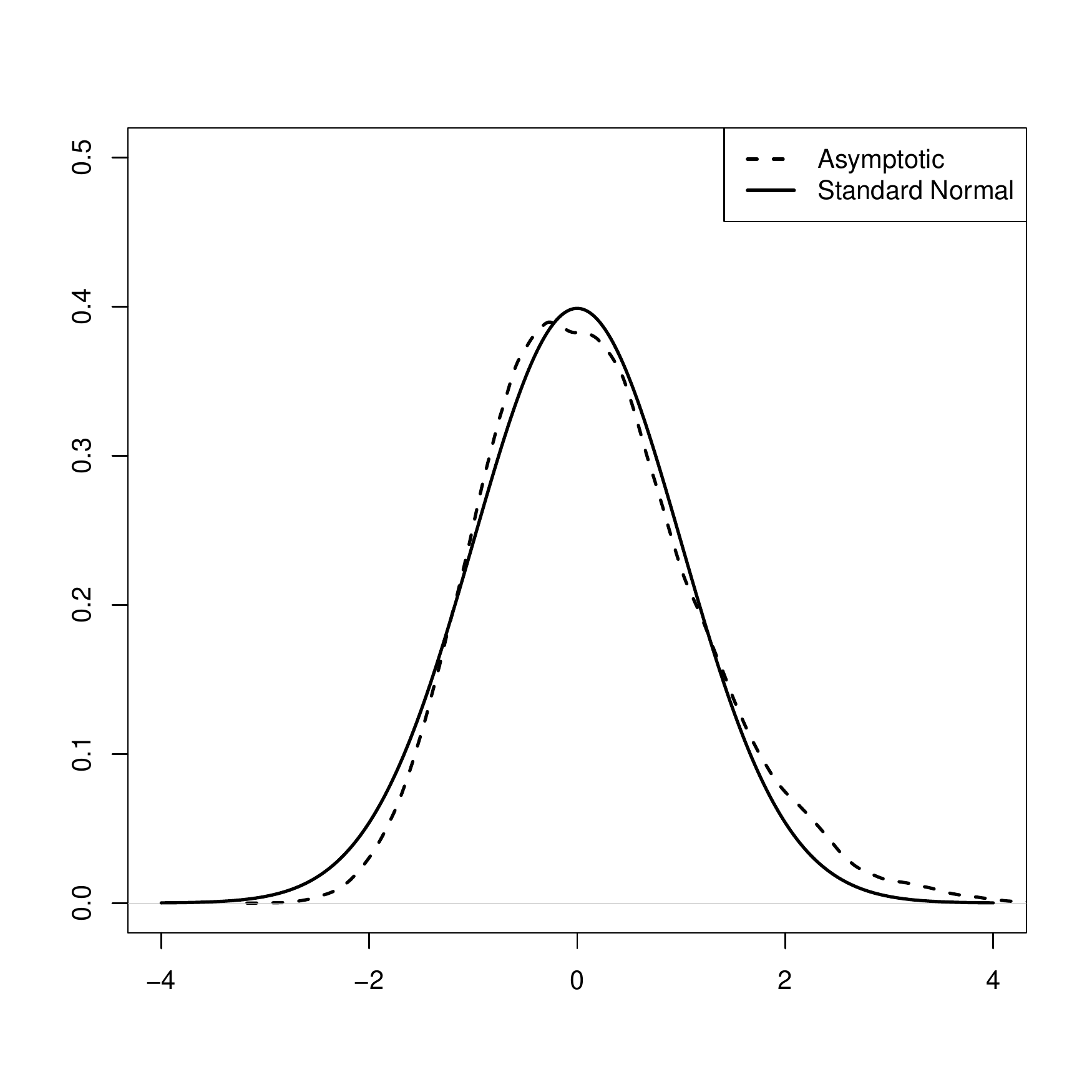}
  \includegraphics[width=8cm]{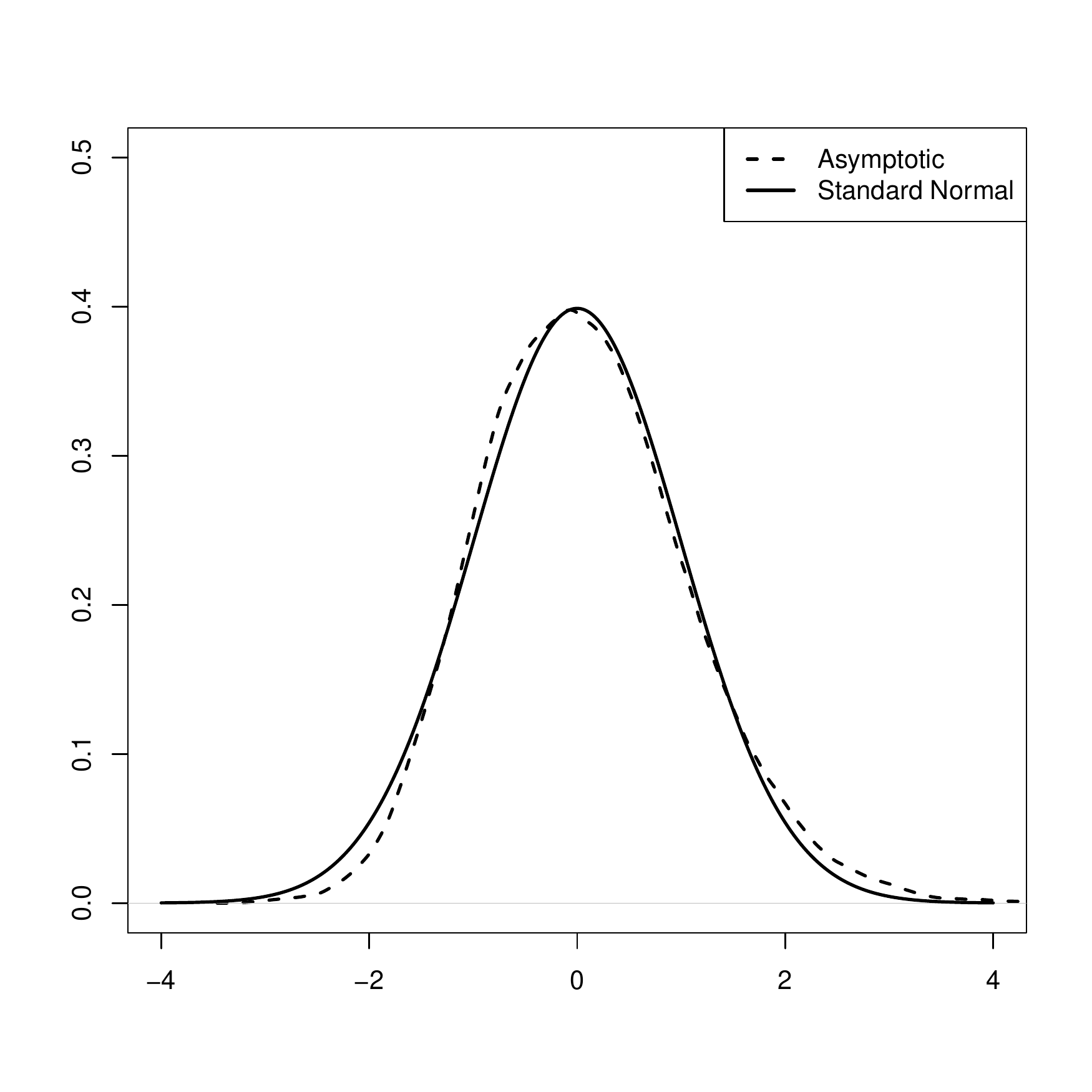}
  \begin{tabular}{lcrp{1.5in}}
(a)& $p=400$, $n=500$, $\bnu \sim\mathcal{TN} _q (\mathbf 0, \mathbf I_q) $.\\
\end{tabular}
\begin{tabular}{lcrp{1.5in}}
(b)&  $p=800$, $n=1000$, $\bnu \sim\mathcal{TN} _q (\mathbf 0, \mathbf I_q) $.\\
\end{tabular}
  \includegraphics[width=8cm]{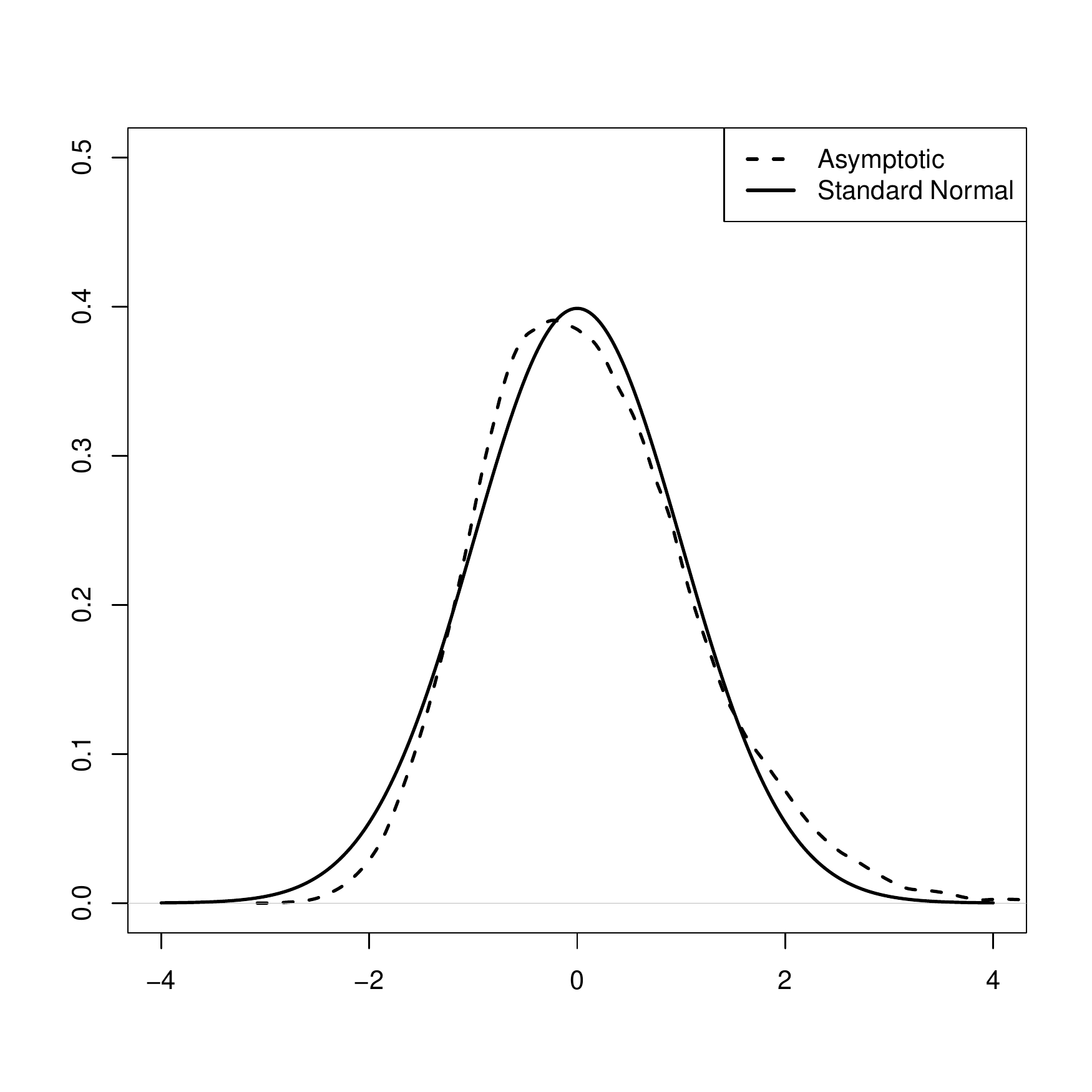}
  \includegraphics[width=8cm]{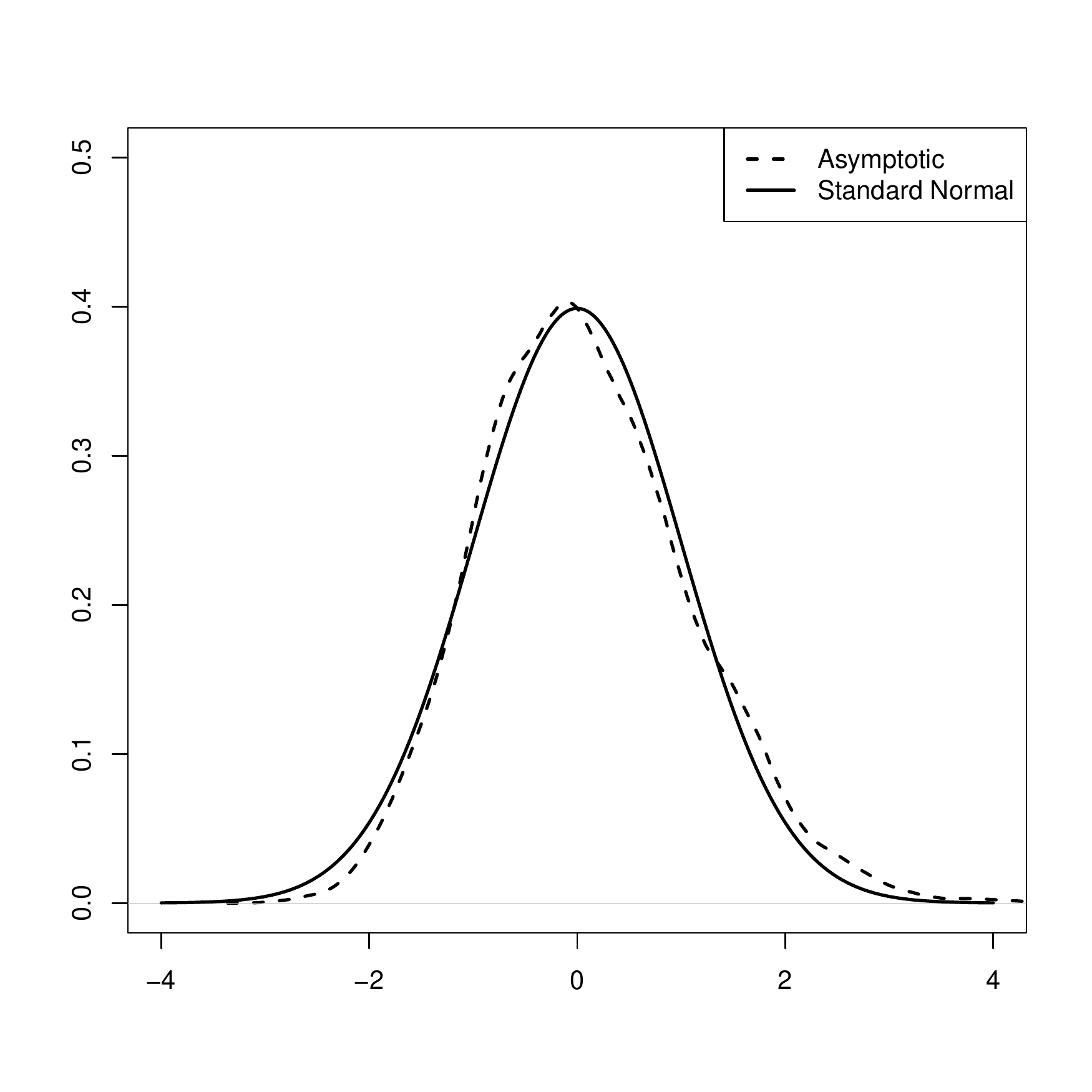}
  \begin{tabular}{lcrp{1.5in}}
(c)& $p=400$, $n=500$, $\bnu \sim\mathcal{GAL} _q (\mathbf 1_q, \mathbf I_q, 10).$\\
\end{tabular}
\begin{tabular}{lcrp{1.5in}}
(d)& $p=800$, $n=1000$, $\bnu \sim\mathcal{GAL} _q (\mathbf 1_q, \mathbf I_q, 10).$ \\
\end{tabular}
  \caption{The kernel density estimator of the asymptotic distribution as given in Theorem \ref{th4} for $c=0.8$.}
  \label{fig7}
\end{figure}

%----------------------------------------------------------------------------------------------------
% Figure 8
%----------------------------------------------------------------------------------------------------

\begin{figure}
  \includegraphics[width=8cm]{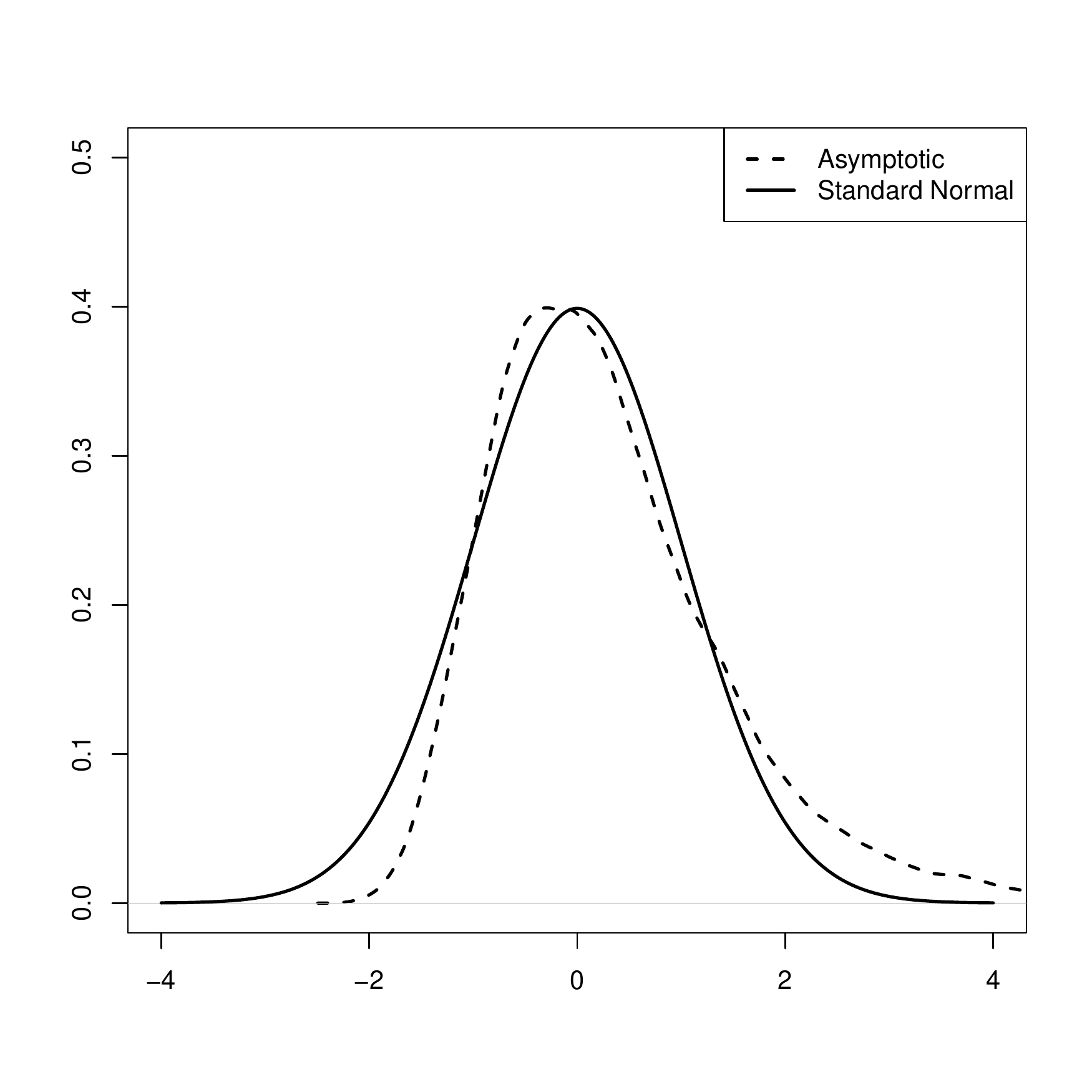}
  \includegraphics[width=8cm]{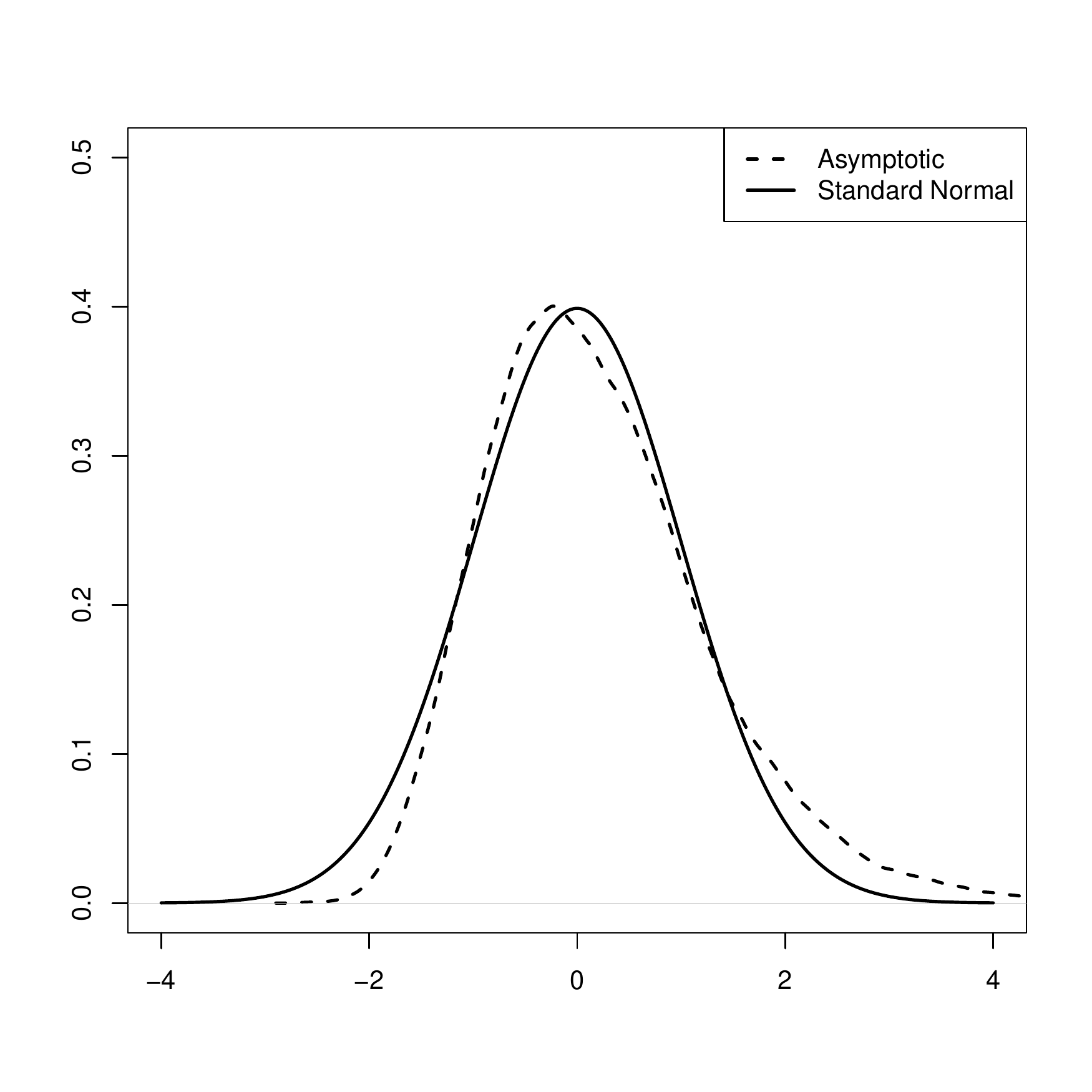}
  \begin{tabular}{lcrp{1.5in}}
(a)& $p=475$, $n=500$, $\bnu \sim\mathcal{TN} _q (\mathbf 0, \mathbf I_q) $.\\
\end{tabular}
\begin{tabular}{lcrp{1.5in}}
(b)&  $p=950$, $n=1000$, $\bnu \sim\mathcal{TN} _q (\mathbf 0, \mathbf I_q) $.\\
\end{tabular}
  \includegraphics[width=8cm]{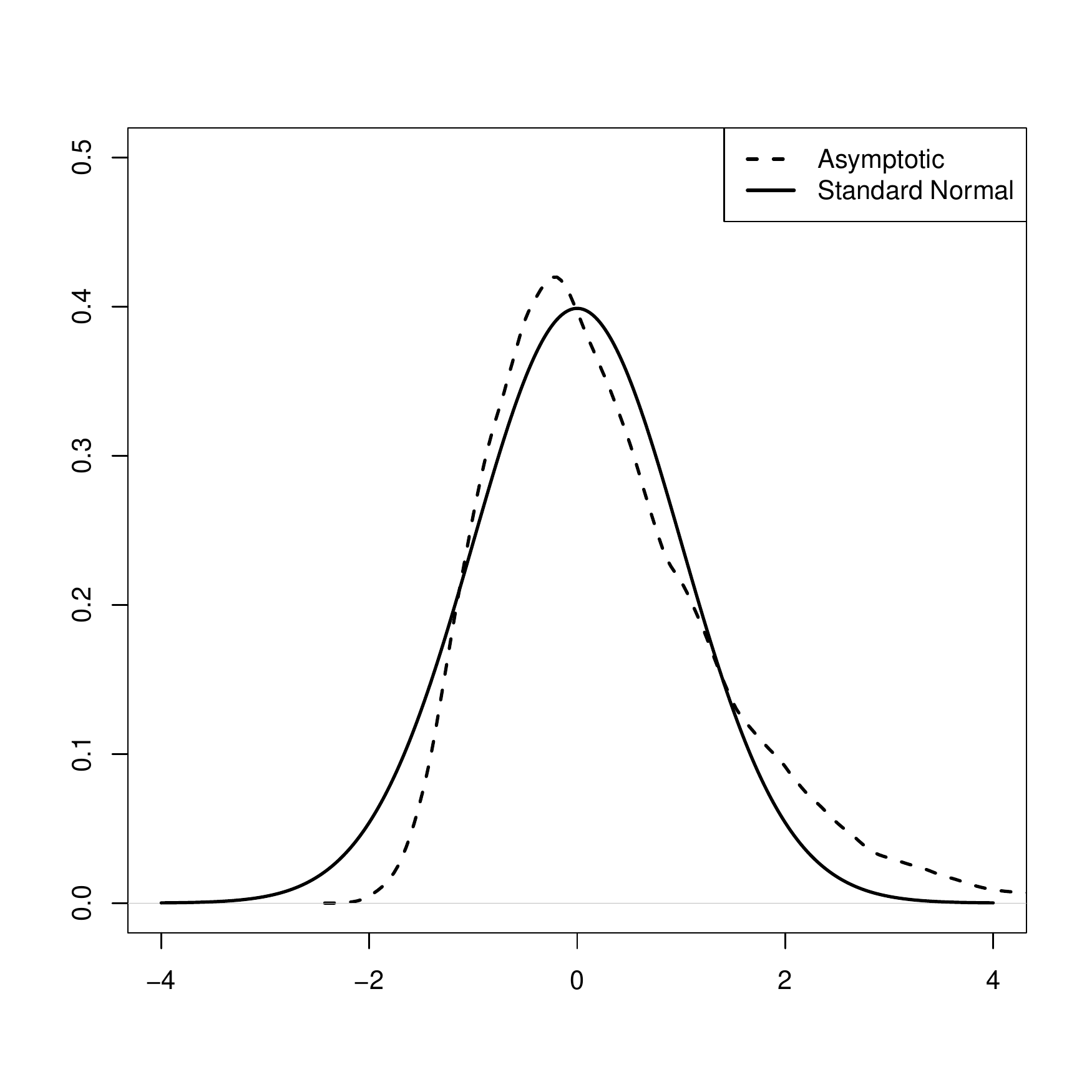}
  \includegraphics[width=8cm]{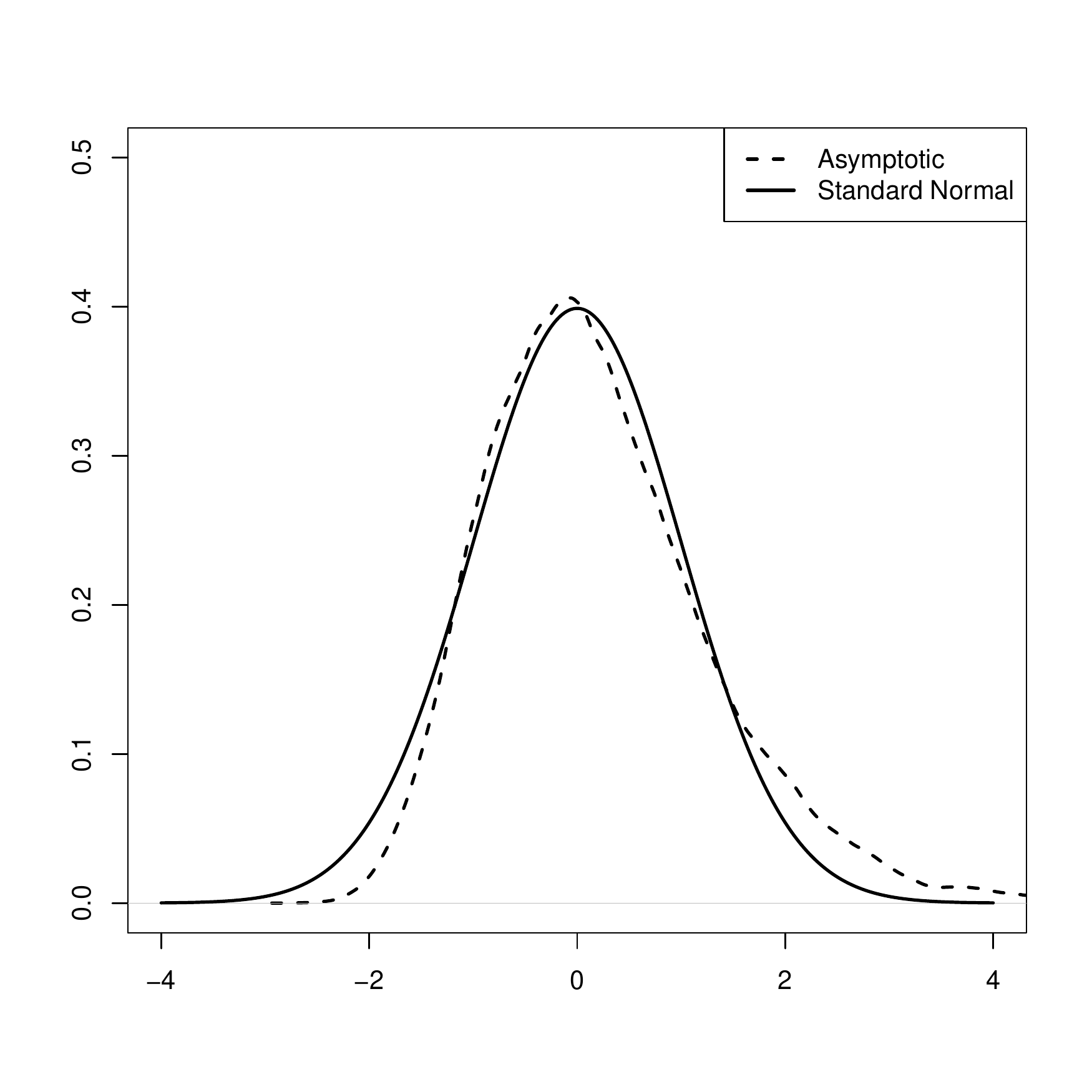}
  \begin{tabular}{lcrp{1.5in}}
(c)& $p=475$, $n=500$, $\bnu \sim\mathcal{GAL} _q (\mathbf 1_q, \mathbf I_q, 10).$\\
\end{tabular}
\begin{tabular}{lcrp{1.5in}}
(d)& $p=950$, $n=1000$, $\bnu \sim\mathcal{GAL} _q (\mathbf 1_q, \mathbf I_q, 10).$ \\
\end{tabular}
  \caption{The kernel density estimator of the asymptotic distribution as given in Theorem \ref{th4} for $c=0.95$.}
  \label{fig8}
\end{figure}

\end{document}